\newcommand{\MM}{\ensuremath{\mathbb{MM}^{\mathbf a}}}
\newcommand{\MP}{\ensuremath{\mathbb{MP}^{\mathbf a}}}
\newcommand{\WP}{\ensuremath{\mathbb{QWP}^{\mathbf a}}}
\newcommand{\Noumi}[2]{\ensuremath{\mathfrak{N}^{#1}_{#2}}}
\newcommand{\N}{(\mathbb{Z}_{\ge 0})}
\newtheorem{theorem}{Theorem}[section]
\newtheorem{lemma}[theorem]{Lemma}
\newtheorem{proposition}[theorem]{Proposition}
\newtheorem{corollary}[theorem]{Corollary}
\theoremstyle{definition}
\theoremstyle{definition}
\theoremstyle{definition}
\newtheorem{definition}[theorem]{Definition}
\theoremstyle{definition}
\newcommand{\HH}{{\mathrm H}}
\newcommand{\W}{{\mathrm W}}
\newcommand{\J}{J_{N,\{a_i\},\rho}}
\newcommand{\K}{K_{u,N,\{a_i\},\rho}}
\newcommand{\Ktilde}{ \tilde{K}_{u,N,\{a_i\},\rho}}
\newcommand{\Kprime}{K'_{u,N,\{a_i\},\rho}}
\newcommand{\G}{{\mathrm G_{N,\{a_i\},\rho}}}
\renewcommand{\O}{\mathcal E}
\newcommand{\blambda}{{\boldsymbol\lambda}}
\newcommand{\cR}{\ensuremath{\mathcal{R}}}
\renewcommand{\l}{\Big \langle}
\renewcommand{\r}{\Big \rangle}
\newcommand{\Y}{\mathbb Y}
\def\Gampq{\Gamma_{\!p,q}}
\def\Gamq{\tilde{\Gamma}_{\!q}}
\newcommand{\MF}{\mathbb{MM}^{\mathbf f}}
\newcommand{\MPF}{\mathbb{MP}^{\mathbf f}}
\newcommand{\ldeg}{{\rm ldeg}}
\newcommand{\Z}{\ensuremath{\mathbb{Z}}}
\newcommand{\Mac}{\mathfrak{M}}
\renewcommand{\i}{\mathbf i}
\numberwithin{equation}{section} \numberwithin{theorem}{section}
\title{Observables of Macdonald processes}
\author[A. Borodin]{Alexei Borodin}
\address{A. Borodin,
Massachusetts Institute of Technology, Department of Mathematics, Cambridge, MA 02139, USA, and
Institute for Information Transmission Problems of Russian Academy of Sciences, Moscow, Russia}
\email{borodin@math.mit.edu}
\author[I. Corwin]{Ivan Corwin}
\address{I. Corwin,
Massachusetts Institute of Technology, Department of Mathematics, Cambridge, MA 02139, USA, and Clay Mathematics Institute, 10 Memorial Blvd. Suite 902, Providence, RI 02903, USA}
\email{icorwin@mit.edu}
\author[V. Gorin]{Vadim Gorin}
\address{V. Gorin,
Massachusetts Institute of Technology, Department of Mathematics, Cambridge, MA 02139, USA, and
Institute for Information Transmission Problems of Russian Academy of Sciences, Moscow, Russia}
\email{vadicgor@gmail.com}
\author[S. Shakirov]{Shamil Shakirov}
\address{S. Shakirov,
U.C. Berkeley, Department of Mathematics, Berkeley, CA 94720 USA} \email{shakirov@itep.ru}
\begin{document}

\maketitle

\begin{abstract}
We present a framework for computing averages of various observables of Macdonald processes. This
leads to new contour--integral formulas for averages of a large class of multilevel observables, as
well as Fredholm determinants for averages of two different single level observables.
\end{abstract}

\begin{flushright}
To the memory of A.Zelevinsky.
\end{flushright}

\tableofcontents

\section{Introduction}
The last decade saw great success surrounding the applications of Schur processes \cite{Ok},
\cite{OR_Schur} to probability (cf.\ \cite{BG}). Starting with the 2011 work of \cite{BigMac} (see
also \cite{F1}, \cite{FR}), more general Macdonald processes  have proved
useful in solving a number of problems
in probability, including: computing exact Fredholm determinant formulas and associated asymptotics
for one--point marginal distributions of the O'Connell--Yor semi--discrete directed polymer
\cite{BigMac}, \cite{BCF} (see also \cite{OConYor}, \cite{OCon}), log--gamma discrete directed
polymer \cite{BigMac}, \cite{BCR} (see also \cite{COSZ}, \cite{S}), Kardar--Parisi--Zhang /
stochastic heat equation \cite{BCF} (see also \cite{ACQ}, \cite{SS}), $q$--TASEP \cite{BigMac},
\cite{BCS}, \cite{BC_discrete} and $q$--PushASEP \cite{BP}, \cite{CP}; showing Gaussian free field
fluctuations for the general $\beta$ Jacobi corners process \cite{BG_GFF} and constructing a
multilevel extension of the general $\beta$ Dyson Brownian Motion \cite{GS}.

These probabilistic systems and formulas describing them arise under various choices and limits of
parameters (sometimes called {\it degenerations}) for Macdonald processes (as well as natural
dynamics which behave well with respect to Macdonald processes). There are other important
degenerations including the study of measures on plane partitions \cite{Vul}, random unitriangular
matrices over finite fields \cite{Borodin}, \cite{Fulman}, \cite[Section 4]{GKV}, Kingman and
Ewens--Pitman partition structures \cite{Kingman},  \cite{Kerov}, \cite[Chapter I]{Kerov_book},
\cite{Petrov}, $z$--measures as well as other distributions originating from the representation
theory of ``big'' groups \cite{BO-z}, \cite{OkOl}, \cite{KOO}. Many more examples are known for
the degeneration related to the Schur processes, e.g.\ domino/lozenge tilings and shufflings,
totally asymmetric simple exclusion process,  polynuclear growth model, last passage percolation,
longest increasing subsequences in random permutations (see the review \cite{BG}). Figure
\ref{BigPicture}  indicates how these systems relate to Macdonald processes.

\medskip

\begin{figure}
\begin{center}
\includegraphics[scale=.68]{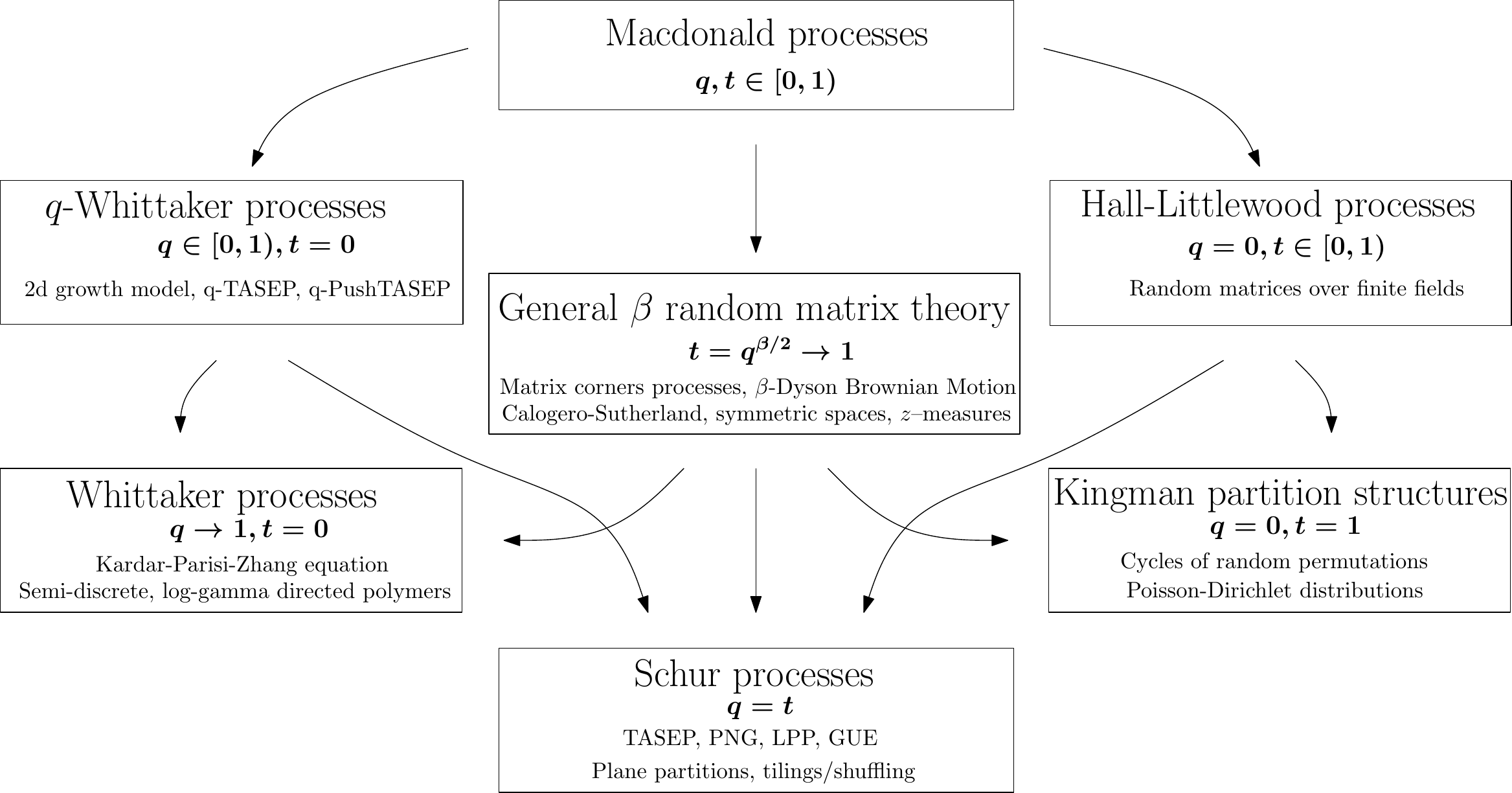}
\end{center}
\caption{Macdonald processes unify the study of a diverse array of probabilistic systems.}\label{BigPicture}
\end{figure}

The integrable properties of the Macdonald symmetric functions (i.e., the family of Macdonald
difference operators diagonalized by them) through which the Macdonald processes are defined
naturally, lead to a family of observables whose expectations can be concisely written via
contour--integral formulas. This approach to studying observables of Macdonald processes was
initiated in \cite{BigMac} and the purpose of this paper is to develop this direction in its full
generality. The family of observables for whose expectation we have contour--integral formulas is
sufficiently rich so as to completely characterize the distribution of the Macdonald process.
Thus, one could call this an {\it integrable probabilistic system} (cf.\ \cite{BG}). We expect
that these new results will prove useful for many of the degenerations of Macdonald processes
indicated in Figure \ref{BigPicture}.

This should be compared to the fact that Schur processes of \cite{OR_Schur} (degenerations of
Macdonald processes when $q=t$, see Figure \ref{BigPicture}) are known to be determinantal, meaning
that all of their correlation functions are given by determinants made of a single correlation
kernel. Marginal distributions of determinantal point processes are known to be expressible in
terms of Fredholm determinants. Macdonald processes do not appear to be determinantal and the
family of (non-local) observables which we study at the Macdonald processes level is different from
those related to correlation functions (and do not degenerate to those when $q=t$)\footnote{It is
possible at $q=t$ to use the Macdonald process observables to recover the Schur process correlation
kernel, cf. \cite[Remark 2.2.15]{BigMac} and \cite{Amol}.}. Nevertheless we introduce two single
level observables of Macdonald processes whose expectations are given by Fredholm determinants. The
first relies upon an operator of \cite{Noumi} (see also \cite{FHHSY}), which is diagonalized by the
Macdonald symmetric polynomials, while the second (more elementary result) relies upon the
Macdonald difference operators.

Besides providing a unified framework through which to study (and discover) a variety of probabilistic systems,
Macdonald processes exist at a sufficiently high algebraic level so that they may be treated as formal algebraic
objects. This formal perspective, which is introduced in Section \ref{gendef} and applied in detail in Section
\ref{Section_Formal_Macdonald}, enables us to deal with a more general case of Macdonald processes than just the
ascending Macdonald processes which was the primary interest of \cite{BigMac}.  Already in \cite{BCFV} this formal
perspective has proved useful in justifying identities for which a direct analytic proof is unjustifiable due to
divergences.  In Section \ref{ascsec} we do specialize to ascending Macdonald processes and find new formulas for
expectations of multilevel observables, some of which have already been applied in work on $q$--TASEP \cite{BCS} and
general $\beta$ random matrix theory \cite{BG}.

\medskip

We briefly introduce Macdonald processes and then highlight two of the results which we prove in
subsequent sections. The notation and exact definitions related to symmetric functions and formal
power series is introduced and explained in Section \ref{gendef}. As given in Definition
\ref{Definition_mac_pro}, the {\it formal Macdonald process} $\MPF_{N,\mathbf{A},\mathbf{B}}$ is a
formal probability measure (of total weight 1) on $\Y^N$ (here $\Y$ is the set of all partitions)
such that
$$
\MPF_{N,\mathbf{A},\mathbf{B}}(\lambda^1,\dots,\lambda^N) = \frac{P_{\lambda^1}(A^1) \Psi_{\lambda^2,\lambda^{1}}(A^2;B^{1})
\Psi_{\lambda^3,\lambda^{2}}(A^3;B^{2}) \cdots  \Psi_{\lambda^N,\lambda^{N-1}}(A^N;B^{N-1})Q_{\lambda^N}(B^N)}{ \prod_{1\le \alpha\le \beta \le N} \Pi(A^\alpha; B^\beta)}
,
$$
where
$$
\Psi_{\lambda,\mu}(A;B)=\sum_{\nu\in\Y} P_{\lambda/\nu}(A) Q_{\mu/\nu}(B),
$$
and the factors $\Pi(A^\alpha; B^\beta)$ are defined via
$$
\Pi(X;Y)= \sum_{\lambda\in \Y} P_{\lambda}(X)Q_{\lambda}(Y).
$$
Here $P_{\bullet}$ and $Q_{\bullet}$
 are (skew)
Macdonald symmetric functions (cf. Definition \ref{Macdef}) and they depend on
two auxiliary (Macdonald) parameters traditionally denoted by $q,t~\in~[0,1)$.

This is called a formal probability measure since it does not assign a non--negative real
probability to a given choice of $\lambda^1,\ldots, \lambda^N$, but rather assigns a formal power
series in the symmetric functions of the $2N$ sets of variables $\mathbf{A}=(A^1,\dots,A^N)$,
$\mathbf B=(B^1,\dots,B^N)$ (each of these sets of variables is, itself, an infinite collection of
indeterminates, so that e.g.\ $A^1 = (a^1_1,a^1_2,\ldots)$). Alternatively, this can be thought of
as formal power series in the Newton power sums $p_k(A^{i})$ and $p_{k}(B^{j})$, where $p_k(X) =
\sum_{i} (x_i)^k$.

Define the observable $\O_r: \Y\to \mathbb C$ as (cf. Definition \ref{Definition_observable})
$$
\O_r (\lambda)= \lim_{N\to\infty} e_r(q^{-\lambda_1},q^{-\lambda_2}t,\dots, q^{-\lambda_N}
t^{N-1}),\quad r\ge 1,
$$
where $e_r$ is the $r^{th}$ elementary symmetric polynomial, and $\O_0 (\lambda)=1$.

For sets of indeterminates $X=(x_1,x_2,\ldots)$ and $Y=(y_1,y_2,\ldots)$, define also
$$
 \HH(X;Y) =\prod_{i,j} \frac{1-t x_i y_j}{1-x_i y_j}, \qquad \textrm{and} \qquad
 \W(X;Y) =\prod_{i,j} \frac{(1-t x_i y_j) (1-q x_i y_j)}{(1-x_i y_j)(1-qt x_i y_j)}.
$$
These can be viewed as formal power series via $(1-u)^{-1} = \sum_{j\geq 0} u^j$.

The statement of Theorem \ref{1.1} below (Theorem \ref{theorem_observable_multilevel} in the main text) should be
understood formally as an identity of symmetric power series.

\begin{theorem}\label{1.1}
 Take $N\ge 1$ and $r_1,\dots,r_N\ge 0$. For $1\le m \le N$, set $V^m=\{v^m_1,\dots,v^m_{r_m}\}$  and define
$$
 DV^m = \frac{1}{(r_m)!(2\pi \i)^{r_m}} \det \left[ \frac{1}{
 v^m_i-tv^m_j}\right]_{i,j=1}^{r_m} \prod_{i=1}^{r_m} dv_i^m.
$$
We have
\begin{multline*}
\label{eq_x6}
 \sum_{\lambda^1,\dots,\lambda^N\in\Y}
 \O_{r_1}(\lambda^1) \cdots
 \O_{r_N}(\lambda^N) \MPF_{N,\mathbf{A},\mathbf{B}} (\lambda^1,\dots,\lambda^N)
 \\= \oint \cdots \oint \prod_{m=1}^N
 (DV^m) \prod_{1\le \alpha\le \beta\le N} \HH\big((qV^\alpha)^{-1}; B^\beta\big)
  \HH\big(A^{\alpha};V^\beta\big)
  \W\big((qV^\alpha)^{-1}; V^\beta\big).
\end{multline*}
Here for a set of variables $V=\{v_1,\dots,v_r\}$,  $(qV)^{-1}$ means the set $\{(qv_1)^{-1},\dots,(q v_r)^{-1}\}$. The
contours of integration are  a collection of positively oriented circles $\gamma_1,\dots,\gamma_m$ of
radii $R_1,\dots,R_m$ around the origin such that $v_i^\alpha$ is integrated over $\gamma_\alpha$,
and the radii are such that $R_\beta<qR_\alpha$ for $1\le \alpha<\beta \le N$.
\end{theorem}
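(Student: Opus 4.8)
The plan is to reduce the multilevel statement to repeated application of a single‑level eigenrelation for an appropriate Macdonald difference operator, in the spirit of the approach initiated in \cite{BigMac}. First I would recall that the generating function $\sum_{r\ge 0}\O_r(\lambda)\,u^r$ arises as the eigenvalue of a Macdonald difference operator $D_N^u$ (the operator whose eigenfunctions are the $P_\lambda$ in $N$ variables, normalized so that the eigenvalue is $\prod_i (1 - u\,q^{-\lambda_i}t^{i-1})$ up to the standard shift); more precisely, acting on $P_\lambda(x_1,\dots,x_N)$ it multiplies by $e_r$ evaluated at $(q^{-\lambda_1},q^{-\lambda_2}t,\dots)$, and letting the number of variables go to infinity produces exactly $\O_r(\lambda)$. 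The key analytic input is the contour–integral (Macdonald–Noumi–Shiraishi type) representation of $D_N^u$, whose kernel is the determinant $\det[1/(v_i - t v_j)]$ appearing in $DV^m$; applying this operator to the Cauchy kernel $\Pi(A;x)$ reproduces a factor $\HH$. This is the mechanism that converts ``multiply by $\O_r$'' into ``take a contour integral against $DV$''.

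The main steps, in order, would be: (1) Write the left side using the branching structure of $\MPF$: the weight factorizes through the transition kernels $\Psi_{\lambda^{m},\lambda^{m-1}}(A^m;B^{m-1})$, and each observable $\O_{r_m}(\lambda^m)$ can be realized by acting with the difference operator $D^{(m)}$ in an auxiliary set of variables that is then specialized. (2) Use the eigenfunction property to pull each $D^{(m)}$ off $P$/$Q$ and onto the Cauchy factors $\Pi(A^\alpha;B^\beta)$ in the normalization; since $D^{(m)}$ is a commuting family (for fixed level) and acts in disjoint variable sets across levels, the order of application is immaterial and the resulting operators can be applied one at a time. (3) Replace each application of $D^{(m)}$ by its integral kernel, picking up the factor $\prod_m DV^m$ together with, for each pair $\alpha\le\beta$, a factor coming from the action on $\Pi(A^\alpha;B^\beta)$ — after bookkeeping the $q$‑shifts and $t$‑shifts this is precisely $\HH((qV^\alpha)^{-1};B^\beta)\,\HH(A^\alpha;V^\beta)\,\W((qV^\alpha)^{-1};V^\beta)$, the $\W$ factor arising from operators on the same or nested levels acting on each other's ``output.'' (4) Finally, sum the partition series: after the operators are moved to the Cauchy factors, the remaining $\lambda$‑sums collapse back to products of $\Pi$'s via $\sum_\lambda P_{\lambda/\nu}Q_{\mu/\nu}=\dots$ and cancel against the denominator, leaving only the contour integral.

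The nesting condition $R_\beta < q R_\alpha$ for $\alpha < \beta$ is the device that makes all of this rigorous at the level of formal power series and simultaneously, when specialized, at the analytic level: it guarantees that when the kernel of $D^{(\alpha)}$ acts in a variable $v^\alpha$ and subsequently the kernel of $D^{(\beta)}$ acts, the geometric‑series expansions of all the $(1 - \cdot)^{-1}$ factors converge in the correct region, so that the contour integrals may be evaluated by residues in the expected order and no spurious poles are crossed. I expect the main obstacle to be exactly this step (3): carefully tracking which $v$‑variables the difference operator kernels act on relative to each other across the $\le$‑ordered pairs, checking that the $q$‑ and $t$‑dilations compose to give the specific arguments $(qV^\alpha)^{-1}$, $B^\beta$, $V^\beta$ in $\HH$ and $\W$, and verifying that the pole structure is compatible with the radii inequalities. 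The combinatorial collapse in step (4) and the eigenfunction manipulations in step (2) are, by contrast, largely formal once the single‑level case (essentially \cite[Theorem 2.2.13 or thereabouts]{BigMac}) is in hand; the novelty is organizing the induction on $N$ so that the product over $1\le\alpha\le\beta\le N$ emerges cleanly.
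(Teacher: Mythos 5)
Your skeleton matches the paper's strategy at the highest level (a single--level contour identity coming from an operator diagonalized by the Macdonald polynomials, applied once per level, with the $\W$ factors produced by cross--level interactions), but two concrete ingredients are missing, and without them the argument does not go through for the \emph{formal} Macdonald process that the theorem is actually about.

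First, the single--level input. The eigenrelation for the Macdonald difference operators lives on a finite alphabet $x_1,\dots,x_M$ and produces eigenvalues $e_r(q^{\pm\lambda_1}t^{\mp(M-1)},\dots,q^{\pm\lambda_M})$, with contours that must enclose the points $x_i$ (or $x_i^{-1}$) and avoid their $t$--translates. The observable $\O_r(\lambda)=\lim_{M\to\infty}e_r(q^{-\lambda_1},q^{-\lambda_2}t,\dots,q^{-\lambda_M}t^{M-1})$ and the contours in the theorem (circles about the origin) only emerge after a limit $M\to\infty$, and reconciling the two is exactly where the paper's Proposition \ref{proposition_restated_computation_from_BigMac} does its work: one deforms part of the contour to a large circle, checks by a residue/string analysis that nothing is crossed, splits the integral into $2^r$ terms, and shows that all but one are suppressed by explicit factors $t^{m(N+r-m)}\to0$. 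Saying ``letting the number of variables go to infinity produces exactly $\O_r(\lambda)$'' addresses the convergence of the eigenvalue but not of the contour--integral representation; this is the heart of the single--level step and is absent from your outline.

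Second, the multilevel assembly. Your steps (2)--(4) --- pull the operators onto the Cauchy factors $\Pi(A^\alpha;B^\beta)$ in the normalization and then collapse the $\lambda$--sums --- is the mechanism of the \emph{ascending} case (Proposition \ref{prop_observable_as_application_of_op} and Theorem \ref{theorem_ascending_multilevel_1}), where all levels share one alphabet $a_1,\dots,a_N$; note that there the operators $\Mac^{r}_{n}$ for different $n$ do \emph{not} commute, contrary to your claim that the order is immaterial. For the general formal process $\MPF_{N,\mathbf A,\mathbf B}$ there is no alphabet on which difference operators can act, and the paper explicitly states that the only proof it knows of the general identity goes through the formal setting. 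The paper's route is: rewrite the process as a chain of nested Macdonald pairings $\l\cdot,\cdot\r_{Y^k}$ (Proposition \ref{Prop_Mac_pro_alternative}), apply the single--level formal identity once per level, interchange integration with the pairings, and then evaluate each pairing with Proposition \ref{proposition_evaluation_of_scalar_product}; the factor $\W\big((qV^\alpha)^{-1};V^\beta\big)$ arises concretely from pairing $\HH(V^\beta;Y)$ against $\HH((qV^\alpha)^{-1};Y)$ over the auxiliary variables $Y$, and the nesting condition $R_\beta<qR_\alpha$ is exactly what is needed for that pairing to be expandable as the power series \eqref{Wexp}. Your attribution of $\W$ to ``operators acting on each other's output'' is morally right, but the pairing mechanism --- and the nested--pairing rewriting that makes it available --- is the missing idea.
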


In what follows we call a homomorphism of an algebra into $\mathbb C$ \emph{a specialization}. One example of a
specialization of the algebra of symmetric functions in variables $x_1,x_2,\dots$, is obtained by substituting complex
numbers (subject to certain convergence conditions) in place of $x_i$, $i=1,2,\dots$.

Applying appropriate speacializations to the identity of Theorem \ref{1.1} we get an analytic statement.
However, the {\it only} proof we know of the analytic identity proceeds through the formal setting (when restricted to
ascending Macdonald processes as discussed below, a direct analytic proof is known).

\medskip

The ascending Macdonald process $\MP_{N;\{a_i\};\rho}$ (cf. Definition
\ref{def_Mac_ascending_proc}) is the result of specializing the $2N$ sets of variables
$\mathbf{A}=(A^1,\dots,A^N)$, $\mathbf{B}=(B^1,\dots,B^N)$ in a certain way. This is now a
(possibly complex-valued) measure on sequences of interlacing partitions
$\lambda^1,\ldots,\lambda^{N}$ so that $\lambda^{i}$, $1\leq i\leq N$, has at most $i$ nonzero
parts, and
 $$
  \lambda^i_1\ge \lambda^{i-1}_2 \ge \lambda^i_2 \ge \dots\ge \lambda^{i-1}_{i-1} \ge
  \lambda^{i}_i
 $$
for $2\leq i\leq N$. If the specializations are assumed to have certain positivity properties, then the ascending
Macdonald process becomes a bona--fide probability measure, cf.\ \cite[Section 2.2]{BigMac}. In Section
\ref{Section_ascending_obs} we provide contour--integral formulas for expectations of observables of the ascending
Macdonald process of the form (in Theorem \ref{theorem_ascending_multilevel_1} and  Theorem
\ref{theorem_observable_with_vars_inversed}, respectively)
$$
\prod_{i=1}^m e_{r_i}(q^{\lambda^{n_i}_1}t^{n_i-1},q^{\lambda^{n_i}_2}t^{n_i-2},\dots,q^{\lambda^{n_i}_{n_i}}), \qquad \textrm{and} \qquad
\prod_{i=1}^m e_{r_i}(q^{-\lambda^{n_i}_1}t^{1-n_i},\dots,q^{-\lambda^{n_i}_{n_i}}),
$$
where $N\geq n_1\geq n_2\geq \cdots \geq n_m\geq 1$ and $r_1,\ldots ,r_m$ are such that $0\leq r_i\leq n_i$ for $i=1,\ldots, m$.

The projection of the ascending Macdonald process to $\lambda^N$ is the Macdonald measure $\MM_{N;\{a_i\};\rho}$, which is a complex-valued measure on $\Y$ which sums (over $\lambda\in \Y$) to $1$ such that (replacing $\lambda^N$ by $\lambda$)
 $$
  \MM_{N;\{a_i\};\rho}(\lambda)=\dfrac{
  P_{\lambda}(a_1,\dots,a_N) Q_{\lambda}(\rho)}{\Pi(a_1,\dots,a_N;\rho)}.
 $$
Here $a_1,\ldots, a_N$ are complex numbers and $Q_{\lambda}(\rho)$ is the specialization of
$Q_{\lambda}$ (cf. Section \ref{section_formal_measures}). Given some assumptions on the $\{a_i\}$
and $\rho$, the normalizing term
$$
\Pi(a_1,\dots,a_N;\rho):= \sum_{\lambda\in \Y} P_{\lambda}(a_1,\ldots,a_N) Q_{\lambda}(\rho)
$$
is finite and the measure is well--defined.

The $t=0$ degeneration of Theorem \ref{1.2} below (Theorem \ref{Theorem_qtdet} in the main text)
was previously discovered in \cite[Corollary 3.2.10 and Theorem 3.2.11]{BigMac} and served as the
basis for computing exact Fredholm determinant formulas and associated asymptotics for one--point
marginal distributions of the O'Connell--Yor semi--discrete directed polymer,
Kardar--Parisi--Zhang / stochastic heat equation and $q$--TASEP. The proof in \cite{BigMac} relied
on the first Macdonald difference operator and its powers. Our present result uses a different
operator diagonalized by the Macdonald polynomials.

\begin{theorem}\label{1.2}
Fix $N$ non--zero complex numbers $a_1$,\dots, $a_N$ and a specialization $\rho$. Then, under certain assumptions (cf.\
 Theorem \ref{Theorem_qtdet}) on these parameters, as well as the contour $\gamma$, we have that the following
equality holds as an identity of power series in $u$:
\begin{equation*}
\sum_{\lambda\in\Y} \prod_{i=1}^{N}
\frac{\big(q^{\lambda_i}t^{N-i+1}u;q\big)_{\infty}}{\big(q^{\lambda_i}t^{N-i}u;q\big)_{\infty}}\,
\MM_{N,\{a_i\},\rho}(\lambda) = \det\big(I+\K\big)_{L^2(\gamma)},
\end{equation*}
where
\begin{equation*}
\K(w,w') =  \sum_{v=1}^{\infty} \frac{u^v}{q^v w-w'} \frac{\G(w)}{\G(q^v w)},\qquad
\G(w) = \frac{1}{\Pi(w;\rho)} \prod_{j=1}^{N} \frac{(tw/a_j;q)_{\infty}}{(w/a_j;q)_{\infty}}.
\end{equation*}
\end{theorem}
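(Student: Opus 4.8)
The plan is to run the ``apply a Macdonald eigenoperator to the Cauchy kernel, then resum into a Fredholm determinant'' strategy of \cite{BigMac}, but using the Noumi operator \cite{Noumi}, \cite{FHHSY} in place of the first Macdonald difference operator. Write $S(u)$ for the left-hand side. The key input is the Noumi operator $\mathfrak N_u$ on symmetric polynomials in $N$ variables $x_1,\dots,x_N$, diagonalized by the Macdonald polynomials with
\[
 \mathfrak N_u\,P_\lambda(x_1,\dots,x_N)=\left(\prod_{i=1}^N\frac{(q^{\lambda_i}t^{N-i+1}u;q)_\infty}{(q^{\lambda_i}t^{N-i}u;q)_\infty}\right)P_\lambda(x_1,\dots,x_N).
\]
The coefficient of $u^k$ in this eigenvalue is a polynomial of bounded degree in $q^{\lambda_1},\dots,q^{\lambda_N}$, hence a polynomial in the eigenvalues of the Macdonald operators $D_N^1,\dots,D_N^N$; thus $\mathfrak N_u=\sum_{k\ge0}u^k\mathfrak N_k$ with each $\mathfrak N_k$ a finite-order $q$-difference operator, and this is the sense in which the asserted power-series identity in $u$ is to be read. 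Applying $\mathfrak N_k$ termwise to the convergent series $\Pi(x_1,\dots,x_N;\rho)=\sum_\lambda P_\lambda(x_1,\dots,x_N)Q_\lambda(\rho)$ and using the definition of $\MM_{N,\{a_i\},\rho}$, one gets
\[
 S(u)=\frac{1}{\Pi(a_1,\dots,a_N;\rho)}\,\bigl(\mathfrak N_u\,\Pi(x_1,\dots,x_N;\rho)\bigr)\big|_{x_i=a_i}.
\]

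Next I would evaluate the right-hand side explicitly. Order by order in $u$, each $\mathfrak N_k$ is a finite sum of terms of the shape (Cauchy-type rational function of $x_1,\dots,x_N$) $\times$ (product of integer $q$-shifts $x_i\mapsto q^{m_i}x_i$). Combining this with the multiplicativity $\Pi(x_1,\dots,x_N;\rho)=\prod_{i=1}^N\Pi(x_i;\rho)$ and the fact that the shift ratios $\Pi(q^m x;\rho)/\Pi(x;\rho)$ are explicit finite products of $(\cdot\,;q)$-Pochhammer ratios, then pulling the factors $\prod_{j=1}^N\frac{(tx/a_j;q)_\infty}{(x/a_j;q)_\infty}$ out of those ratios, dividing by $\Pi(a;\rho)$, and recalling $\G(w)=\Pi(w;\rho)^{-1}\prod_{j=1}^N\frac{(tw/a_j;q)_\infty}{(w/a_j;q)_\infty}$, one rewrites $S(u)$ as an explicit multiple sum in which the parameters $\{a_i\},\rho$ enter \emph{only} through $\G$: a coordinate that gets $q^v$-shifted contributes a factor $u^v\,\G(a_i)/\G(q^v a_i)$, while the remaining dependence is through the Cauchy rational coefficients of $\mathfrak N_u$.

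It then remains to identify this sum with $\det(I+\K)_{L^2(\gamma)}=\sum_{n\ge0}\frac1{n!}\oint_\gamma\!\cdots\!\oint_\gamma\det\bigl[\K(w_i,w_j)\bigr]_{i,j=1}^n\prod_{i=1}^n\frac{dw_i}{2\pi\i}$, with $\K(w,w')=\sum_{v\ge1}\frac{u^v}{q^vw-w'}\frac{\G(w)}{\G(q^vw)}$. Substituting the series for $\K$, expanding the $n\times n$ determinant over permutations, and computing the $w_i$-integrals by residues --- the hypotheses on $\{a_i\},\rho$ and on $\gamma$ being precisely what places the relevant poles (those of $\G$ arising from $\prod_j(w/a_j;q)_\infty^{-1}$, and those of the factors $(q^vw-w')^{-1}$) on the prescribed side of $\gamma$ --- each cycle of a permutation telescopes into a chain of $q$-shifts $w\mapsto q^v w$, and one reads off, order by order in $u$, exactly the multiple sum of the previous paragraph; the residual Cauchy determinant $\det\bigl[(q^{v_i}w_i-w_j)^{-1}\bigr]$ is what matches the rational coefficients of $\mathfrak N_u$. (Equivalently, the last step is an instance of a resummation lemma of the type used in \cite{BigMac}, turning the action of a Noumi-type difference operator on a Cauchy kernel into a Fredholm determinant.) I expect the main obstacle to be exactly this matching --- aligning the Cauchy/Vandermonde factors produced by the nested residues with those in the explicit expansion of $\mathfrak N_u$ --- together with the bookkeeping needed to justify, under the stated assumptions, the interchanges of summation and integration and the successive residue evaluations (convergence of the $u$-series, convergence of the Fredholm determinant, and the placement of poles relative to $\gamma$). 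As a consistency check one can set $t=0$, where $\mathfrak N_u$ degenerates and the statement reduces to \cite[Theorem 3.2.11]{BigMac}.
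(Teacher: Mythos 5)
Your proposal follows essentially the same route as the paper: express the left-hand side as $\frac{\Noumi{u}{N}\,\Pi(x_1,\dots,x_N;\rho)}{\Pi(x_1,\dots,x_N;\rho)}\big|_{x_i=a_i}$ using the Noumi operator's eigenrelation (Proposition \ref{NoumiProp}), then match the Fredholm expansion of $\det(I+\K)$ term by term with the expansion of $\Noumi{u}{N}$ over $\nu\in\N^N$ via residue evaluation at $w_i=a_{p(i)}$ and the Cauchy determinant identity --- which is precisely the content of Lemma \ref{step2lem} and Steps 1--3 of Proposition \ref{Proposition_qtFredholm1}. The matching you flag as the main obstacle is exactly where the paper's work lies, and your $t=0$ consistency check is also the one the paper notes.
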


For the degeneration related to directed polymers (i.e.\ Whittaker processes, see Figure
\ref{BigPicture}), this (somewhat surprisingly) converges as $N\to \infty$ to the GUE Tracy-Widom
distribution \cite{BigMac}, \cite{BCF}, \cite{BCR}. We look forward to investigating the
asymptotics of the above Fredholm determinant in other degenerations indicated in Figure
\ref{BigPicture}.

\subsection{Acknowledgements}
AB was partially supported by the NSF grant DMS-1056390. IC was partially supported by the NSF
through DMS-1208998 as well as by Microsoft Research through the Schramm Memorial Fellowship, and
by a Clay Mathematics Institute Research Fellowship. VG was partially supported by RFBR--CNRS
grant 11-01-93105. SS was supported in part by grant RFBR 13-02-00478, grant for support of scientific schools NSH-3349.2012.2, and government contract 8606.

\section{General definitions}\label{gendef}

\subsection{Symmetric functions}

A \emph{partition} $\lambda$ is a weakly decreasing sequence of non--negative integers
$\lambda_1\ge\lambda_2\ge\dots\ge 0$, such that $\sum_i \lambda_i<\infty$. The last sum is called
the size of a partition and is denoted $|\lambda|$. Let $\Y_n$ denote the set of all partitions of
size $n$ and set
$$
 \Y=\bigcup_{n=0}^\infty \Y_n,
$$
where we assume $\Y_0$ to be a singleton consisting of $\varnothing$. The number of nonzero
coordinates (parts) in $\lambda$ is called the length of $\lambda$ and denoted $\ell(\lambda)$.

In what follows we denote by capital letters $X,Y,A,B$, \emph{sets} of variables and by lower
case letters $x,y,\dots$, \emph{single} variables. Let $\Lambda_X$ denote the $\mathbb Z_{\ge 0}$
graded algebra (over $\mathbb C$) of symmetric functions in variables $X=(x_1,x_2,\dots)$, which
can be viewed as the algebra of symmetric polynomials in infinitely many variables $x_1,x_2,\dots$
of bounded degree, see e.g.\ \cite[Chapter 1]{M} for general information on $\Lambda$. One way to
view $\Lambda$ is as an algebra of polynomials in Newton power sums
$$
 p_k(X)=\sum_{i} (x_i)^k, \quad k=1,2,\dots.
$$
For any partition $\lambda$ we set
$$
 p_\lambda(X)=\prod_{i=1}^{\ell(\lambda)} p_{\lambda_i}(X).
$$
Elements $p_\lambda(X)$, $\lambda\in\Y$ form a linear basis in $\Lambda_X$.

An alternative set of algebraically independent generators of $\Lambda_X$ is given by the
elementary symmetric functions
$$
 e_k(X)=\sum_{i_1<i_2<\dots<i_k} x_{i_1} x_{i_2}\cdots x_{i_k},\quad k=1,2,\dots.
$$
We usually write $\Lambda_X$, $\Lambda_Y$, etc.\ for (isomorphic) algebras of symmetric functions
in variables $X=(x_1,x_2,\dots)$, $Y=(y_1,y_2,\dots)$ and so on. When the set of variables is
irrelevant, we omit it from the notations and write simply $\Lambda$.

For a symmetric function $f$ let $\phi_0(f)$ be its free (constant, degree $0$) term. Clearly
$\phi:\Lambda\to \mathbb C$ is an algebra homomorphism, and $\phi_0(p_k)=0$, $k=1,2,\dots$.

In what follows we fix two parameters $q$, $t$ and assume  that they  are real numbers satisfying
$0<q,t<1$. Alternatively, in many places below we could have instead assumed that $q$ and $t$ are
formal variables, replacing $\mathbb C$ in the definition of $\Lambda$ and all the following
definitions with the algebra $\mathbb C(q,t)$ of rational function in $q$ and $t$. Since $q$ and
$t$ never change throughout the paper, we omit the dependence on them from the notations.

The Macdonald scalar product $\l \cdot,\cdot\r$ on $\Lambda$ is defined via
\begin{equation}
\label{eq_Macd_scalar} \l p_\lambda, p_\mu \r = \delta_{\lambda,\mu}
\left(\prod_{i=1}^{\ell(\lambda)}\frac{1-q^{\lambda_i}}{1-t^{\lambda_i}}
\right)\left(\prod_{i=1}^\infty i^{m_i(\lambda)} m_i(\lambda)!\right),
\end{equation}
where $m_i(\lambda)$ is the number of parts in $\lambda$ equal to $i$.

The following definition can be found in \cite[Chapter VI]{M}.

\begin{definition}\label{Macdef} Macdonald symmetric functions $P_\lambda$, $\lambda\in \Y$ are a
unique linear basis in $\Lambda$ such that
\begin{enumerate}
\item $\l P_\lambda, P_\mu \r=0$ unless $\lambda=\mu$.
\item The leading
(with respect to the reverse lexicographic order, i.e., $x_1^n$ is the largest monomial of degree
$n$) monomial in $P_\lambda$ is $
\prod_{i=1}^{\ell(\lambda)} x_i^{\lambda_i}.
$
\end{enumerate}
\end{definition}
{\bf Remark 1.} The Macdonald symmetric function $P_\lambda$ is a homogeneous symmetric function of
degree $|\lambda|$.

 {\bf Remark 2.} If we set $x_{N+1}=x_{N+2}=\dots=0$ in $P_\lambda(X)$ then we
arrive at symmetric polynomials $P_\lambda(x_1,\dots,x_N)$ in $N$ variables, which are called the
Macdonald polynomials.

\medskip

Macdonald symmetric functions $Q_\lambda$, $\lambda\in \Y$ are dual to $P_\lambda$, with respect to
the Macdonald scalar product:
$$
 Q_\lambda=\l P_\lambda, P_\lambda \r^{-1} P_\lambda,\quad \l P_\lambda, Q_\lambda \r
 =\delta_{\lambda,\mu}, \quad \lambda,\mu\in\Y.
$$

We also need \emph{skew} Macdonald symmetric functions (see \cite[Chapter VI]{M} for details). Take
two sets of variables $X=(x_1,x_2,\dots)$ and $Y=(y_1,y_2,\dots)$ and a symmetric function
$f\in\Lambda$. Let $(X,Y)$ be the union of sets of variables $X$ and $Y$. Then we can view
$f(X,Y)\in\Lambda_{(X,Y)}$ as a symmetric function in $x_i$ and also a symmetric function in $y_i$,
more precisely, $f(X,Y)$ is a sum of products of symmetric functions of $x_i$ and symmetric
functions of $y_i$. More formally, this operation defines a comultiplication
$\Delta:\Lambda\to\Lambda\otimes\Lambda$, which turns $\Lambda$ into a bi--algebra (see e.g.\
\cite{Zel}).

Skew Macdonald symmetric functions $P_{\lambda/\mu}$, $Q_{\lambda/\mu}$ are defined as the
coefficients in the expansions
\begin{equation}
\label{eq_skew}
 P_\lambda(X,Y)=\sum_\mu P_\lambda(X) P_{\lambda/\mu}(Y), \qquad \textrm{and}\qquad Q_\lambda(X,Y)=\sum_\mu Q_\lambda(X) Q_{\lambda/\mu}(Y).
\end{equation}
Both $P_{\lambda/\mu}$ and $Q_{\lambda/\mu}$ are homogeneous symmetric functions of degree
$|\lambda|-|\mu|$, moreover $P_{\lambda/\mu}=Q_{\lambda/\mu}=0$ unless $\mu\subset\lambda$ (which
means that $\mu_i\le\lambda_i$ for $i=1,2,\dots$).

\subsection{Topology}

Given a $\mathbb Z_{\ge 0}$--graded algebra $A$, its \emph{topological completion} $\overline A$ is
defined as the algebra of all formal series
\begin{equation}
\label{eq_completion} a=\sum_{k=0}^{\infty} a_k, \quad a_k\in A,\, \deg(a_k)=k.
\end{equation}
For any element $a\in \overline A$, its \emph{lower degree} $\ldeg(a)$ is defined as a maximal $K$
such that $a_k=0$ in \eqref{eq_completion} for all $k<K$. We equip $\overline A$ with a
\emph{graded} topology in which a sequence $b^n$ converges to $b\in\overline A$ if and only if
$$
 \lim_{n\to\infty} \ldeg(b-b_n)=+\infty.
$$
In this topology $A$ is a dense subalgebra of $\overline A$. A \emph{completed graded algebra} is
defined as a topological completion of some $\mathbb Z_{\ge 0}$--graded algebra.

Given two graded algebras $A$ and $B$, we equip their tensor product $A\otimes B$ with a unique
grading such that
$$
 \deg(a\otimes b)=\deg(a)+\deg(b)
$$
for any homogeneous $a\in A$, $b\in B$. Given two completed graded algebras $\overline A$ and
$\overline B$, their tensor product is defined as
$$
 \overline A \otimes \overline B = \overline{A\otimes B}.
$$
Given a completed graded algebra $\overline A$ and a graded algebra $B$, their tensor product is
defined as
$$
 \overline A \otimes B = \overline{A \otimes_1 B},
$$
where $A \otimes_1 B$ is the tensor product $A\otimes B$ equipped with a unique grading such that
$$
 \deg(a\otimes b)=\deg(a),
$$
for any homogeneous $a\in A$, $b\in B$. $B\otimes\overline A$ is defined similarly (and is
canonically isomorphic to $\overline A\otimes B$).

Note the difference between $\overline{A\otimes B}$ and $\overline{A \otimes_1 B}$. For instance,
if $A=\mathbb C[x]$ and $B=\mathbb C[y]$ with grading by degree of the polynomials, then
$(1-xy)^{-1}=\sum_{n=0}^{\infty} x^n y^n$ belongs both to $\overline{A\otimes B}$ and $\overline{A
\otimes_1 B}$. Moreover, $(1-x)^{-1}=\sum_{n=0}^{\infty} x^n$ also belongs to both tensor products.
However, $(1-y)^{-1}=\sum_{n=0}^{\infty} y^n$ belongs only to $\overline{A\otimes B}$, but not to
$\overline{A\otimes_1 B}$.

Now take three algebras $\mathcal A$, $\mathcal B$, $\mathcal C$ such that $\mathcal
C\simeq\Lambda$, while $\mathcal A$ and $\mathcal B$ are either graded or completed graded
algebras.
\begin{definition} The Macdonald pairing $\l \cdot, \cdot \r_{\mathcal C}$ is a unique (continuous) bilinear map
$$
 ({\mathcal A}\otimes {\mathcal C}) \times ( {\mathcal C} \otimes {\mathcal B}) \to \mathcal A
 \otimes \mathcal B,
$$
 such that
$$
 \l a\otimes c_1, c_2\otimes b \r_{\mathcal C} = \l c_1,c_2 \r\, a\otimes b.
$$
\end{definition}
{\bf Remark.} When $\mathcal C$ is the algebra $\Lambda_X$ of symmetric functions in variables
$X=(x_1,x_2,\dots)$, we will also use the notation $\l \cdot, \cdot \r_{X}$ for $\l \cdot, \cdot
\r_{\mathcal C}$.

\smallskip

Note that our definitions imply an alternative definition of skew Macdonald symmetric functions:
$$
 P_{\lambda/\mu}(X) = \l P_\lambda(X,Y), Q_\mu(Y) \r_Y.
$$

The following property of the Macdonald pairing is crucial for us.
\begin{proposition}
\label{proposition_evaluation_of_scalar_product}
 Let $p_k$ be the Newton power sums in $\Lambda$
 and let $a_k$, $b_k$ be two sequences of elements of graded algebras $A$ and $B$ with
 $\lim\limits_{k\to\infty} \ldeg(a_k)=\lim\limits_{k\to\infty} \ldeg(b_k)=\infty$, so that
\begin{equation}
\label{eq_summability_condition}
 \sum_{k=1}^\infty \frac{a_k p_k}{k}\in \overline {A} \otimes \Lambda, \quad  \sum_{k=1}^\infty \frac{b_k p_k}{k}\in \overline {B} \otimes
 \Lambda.
\end{equation}
 Then
$$
  \left\langle \exp\left(\sum_{k=1}^\infty \frac{a_k p_k}{k}\right),\,  \exp\left(\sum_{k=1}^\infty
  \frac{p_kb_k}{k}\right) \right\rangle_{\Lambda} = \exp\left(\sum_{k=1}^\infty\left( \frac{1-q^k}{1-t^k}\cdot \frac{a_k
  b_k}{k}\right)\right),
$$
where the right--hand side is an element of $\overline{A\otimes B}$.
\end{proposition}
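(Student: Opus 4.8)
The plan is to deduce the identity from two elementary ingredients: the ``exponential formula'' that re-expresses an exponential of power sums in the $\{p_\lambda\}$-basis, and the explicit value \eqref{eq_Macd_scalar} of the Macdonald scalar product on that basis; the only substantive work is checking that the manipulations are legitimate in the completed tensor products involved, using the continuity of the pairing $\langle\cdot,\cdot\rangle_\Lambda$. Throughout, for a partition $\lambda$ with $m_i=m_i(\lambda)$ parts equal to $i$ I write $z_\lambda=\prod_{i\ge 1}i^{m_i}m_i!$, and for a sequence $(u_k)_{k\ge1}$ in a graded algebra I write $u_\lambda=\prod_{i=1}^{\ell(\lambda)}u_{\lambda_i}$. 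The basic identity I would record first is
$$
\exp\!\Big(\sum_{k\ge1}\frac{u_k p_k}{k}\Big)=\sum_{\lambda\in\Y}\frac{u_\lambda}{z_\lambda}\,p_\lambda ,
$$
which is proved by multiplying out $\prod_{k\ge1}\exp(u_kp_k/k)=\prod_{k\ge1}\sum_{m\ge0}\frac{u_k^m p_k^m}{k^m m!}$ and grouping the resulting monomials in the $p_k$ by the partition they encode.

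Next I would verify well-definedness. Under the hypothesis $\ldeg(a_k),\ldeg(b_k)\to\infty$ (equivalently \eqref{eq_summability_condition}), the $\lambda$-summand of $\sum_\lambda(a_\lambda/z_\lambda)p_\lambda$ has degree at least $|\lambda|$, since $p_\lambda$ carries $\Lambda$-degree $|\lambda|$, so only finitely many $\lambda$ contribute to each graded component and the series converges in $\overline{A\otimes\Lambda}$; the $b$-side is handled identically, and since $\ldeg\!\big(\tfrac{1-q^k}{1-t^k}a_kb_k\big)\to\infty$ as well, the putative right-hand side is a well-defined element of $\overline{A\otimes B}$. After identifying these exponentials as elements to which $\langle\cdot,\cdot\rangle_\Lambda$ extends continuously, I would expand both by the displayed identity and use bilinearity together with $\langle a\otimes c_1,\,c_2\otimes b\rangle_\Lambda=\langle c_1,c_2\rangle\,a\otimes b$ to evaluate term by term:
$$
\Big\langle \sum_{\lambda}\frac{a_\lambda p_\lambda}{z_\lambda},\ \sum_{\mu}\frac{p_\mu b_\mu}{z_\mu}\Big\rangle_{\Lambda}
=\sum_{\lambda,\mu\in\Y}\frac{a_\lambda b_\mu}{z_\lambda z_\mu}\,\langle p_\lambda,p_\mu\rangle .
$$
By \eqref{eq_Macd_scalar} one has $\langle p_\lambda,p_\mu\rangle=\delta_{\lambda\mu}\,z_\lambda\prod_{i}\frac{1-q^{\lambda_i}}{1-t^{\lambda_i}}$, so the double sum collapses to
$$
\sum_{\lambda\in\Y}\frac{1}{z_\lambda}\prod_{i=1}^{\ell(\lambda)}\Big(\frac{1-q^{\lambda_i}}{1-t^{\lambda_i}}\,a_{\lambda_i}b_{\lambda_i}\Big)
=\sum_{\lambda\in\Y}\frac{c_\lambda}{z_\lambda},\qquad c_k:=\frac{1-q^k}{1-t^k}\,a_kb_k ,
$$
and applying the exponential formula in the reverse direction to the sequence $(c_k)$ rewrites this as $\exp\big(\sum_{k\ge1}c_k/k\big)$, which is the asserted right-hand side.

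I expect the only real obstacle to be the topological bookkeeping rather than the algebra. Two points need care. First, one must pin down precisely which completed tensor product hosts each intermediate object --- the distinction drawn in the excerpt between completing with respect to a single grading ($\overline{A\otimes_1\Lambda}$) and with respect to the total grading ($\overline{A\otimes\Lambda}$) --- and check that $\langle\cdot,\cdot\rangle_\Lambda$ applies there with values in $\overline{A\otimes B}$. Second, and most importantly, one must justify evaluating the continuous bilinear map on the double series $\sum_{\lambda,\mu}$ term by term; I would do this by noting that, by continuity, it suffices to compute the pairing on the truncations obtained by discarding all $\lambda$ (respectively $\mu$) of size $\ge M$, where the statement reduces to the classical finite computation with Macdonald polynomials, and then let $M\to\infty$, the hypothesis $\ldeg(a_k),\ldeg(b_k)\to\infty$ ensuring that the $\ldeg$ of the truncation errors on both sides tends to infinity.
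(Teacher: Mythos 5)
Your proof is correct, but it takes a different route from the paper's. You expand both exponentials directly in the power--sum basis via $\exp\bigl(\sum_k u_kp_k/k\bigr)=\sum_\lambda z_\lambda^{-1}u_\lambda p_\lambda$ and then evaluate the pairing term by term using the explicit formula \eqref{eq_Macd_scalar} for $\l p_\lambda,p_\mu\r$, so that the double sum collapses along the diagonal and reassembles into the claimed exponential. The paper instead never touches the $p_\lambda$-basis directly: it starts from the duality $\l P_\lambda,Q_\mu\r=\delta_{\lambda\mu}$ in the form $\l \Pi(X;Y),\Pi(Y;Z)\r_{\Lambda_Y}=\Pi(X;Z)$, rewrites all three $\Pi$'s as exponentials of power sums via the Cauchy identity \eqref{eq_Mac_cauchy}, and then applies continuous homomorphisms sending $p_k(X)\mapsto\frac{1-q^k}{1-t^k}a_k$ and $p_k(Z)\mapsto\frac{1-q^k}{1-t^k}b_k$. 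The two arguments are of course cognate --- the Cauchy identity is itself proved by exactly your orthogonality computation --- but yours is more elementary and self-contained (it uses only the definition of the scalar product, not the Macdonald functions $P_\lambda,Q_\lambda$), while the paper's packages the topological bookkeeping into the single step ``apply a continuous homomorphism to a known identity of symmetric functions,'' which is why its write-up is shorter. Your attention to the distinction between $\overline{A\otimes\Lambda}$ and $\overline{A\otimes_1\Lambda}$ and to the term-by-term evaluation of the pairing on the double series is appropriate and at least as careful as what the paper does; the one residual subtlety (shared by both arguments, and addressed in the paper only by the remark that in applications $a_k,b_k$ are homogeneous of degree $k$) is that when some $\ldeg(a_k)+\ldeg(b_k)=0$ the degree-zero component of the resulting exponential is a genuinely infinite numerical series, so strictly speaking one should either assume $\ldeg(a_kb_k)\ge1$ or interpret that component as a convergent series of complex numbers.
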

{\bf Remark.} The condition \eqref{eq_summability_condition} is satisfied, in particular, if $a_k$,
$b_k$ are two sequences of \emph{homogeneous} elements of graded algebras $A$ and $B$,
respectively, such
 that $\deg(a_k)=\deg(b_k)=k$.
\begin{proof}[Proof of Proposition \ref{proposition_evaluation_of_scalar_product}]
Take three copies $\Lambda_X$, $\Lambda_Y$, $\Lambda_Z$ of the algebra of symmetric functions.
Definitions imply that
\begin{equation}
\label{eq_x11}
 \l \sum_{\lambda\in\Y} P_\lambda(X) Q_\lambda(Y), \sum_{\lambda\in\Y} P_\lambda(Y) Q_\lambda(Z) \r_{\Lambda_Y} =
 \sum_{\lambda\in\Y} P_\lambda(X) Q_\lambda(Z).
\end{equation}
The Cauchy--type identity for Macdonald symmetric functions (see \cite[Chapter VI, (2.7)]{M})
yields
\begin{equation}
\label{eq_Mac_cauchy}
 \sum_{\lambda\in\Y} P_\lambda(X) Q_\lambda(Y) =\exp \left(\sum_{k=1}^\infty \left(\frac{1-t^k}{1-q^k}
 \cdot \frac{p_k(X) p_k(Y)}{k} \right)\right)
\end{equation}
and similarly for the sets of variables $Y$ and $Z$. Then \eqref{eq_x11} implies that
\begin{align}
\label{eq_x12}
 &\left\langle \exp \left(\sum_{k=1}^\infty \left(\frac{1-t^k}{1-q^k}
 \cdot \frac{p_k(X) p_k(Y)}{k} \right)\right),\exp \left(\sum_{k=1}^\infty \left(\frac{1-t^k}{1-q^k}
 \cdot \frac{p_k(Y) p_k(Z)}{k} \right)\right) \right\rangle_{\Lambda_Y}\\
 \nonumber&=
 \exp \left(\sum_{k=1}^\infty \left(\frac{1-t^k}{1-q^k}
 \cdot \frac{p_k(X) p_k(Z)}{k} \right)\right).
\end{align}
Now let $\varphi_{X,A}$ be a (continuous, algebra--) homomorphism from $\overline{\Lambda_X}$ to
$\overline {A}$ such that:
$$
 \varphi_{X,A}: \overline{\Lambda_X} \to \overline {A},\quad \varphi_{X,A}(p_k(X))=\frac{1-q^k}{1-t^k}
 a_k.
$$
Also let $\varphi_{Z,B}$ be a (continuous, algebra--) homomorphism from $\overline{\Lambda_Z}$ to
$\overline {B}$ such that:
$$
 \varphi_{Z,B}: \overline{\Lambda_Z} \to \overline {B},\quad \varphi_{Z,B}(p_k(Z))=\frac{1-q^k}{1-t^k}
 b_k.
$$
Applying $\varphi_{X,A}$ and $\varphi_{Z,B}$ to the identity \eqref{eq_x12} we are done.
\end{proof}

\subsection{Formal measures}

\label{section_formal_measures}

Let $\mathcal N$ be a countable set and let $\mathcal A$ be a completed graded algebra.

\begin{definition}
 A formal probability measure $P$ on $\mathcal N$ taking values in $\mathcal A$ is a map
 $
  P: \mathcal N\to \mathcal A,
 $
 such that
 $$
  \sum_{\eta\in\mathcal N} P(\eta)=1.
 $$
\end{definition}

The following procedure constructs a conventional probability measure on $\mathcal N$ from a
formal one. Take a graded algebra $A$. A \emph{specialization} $\rho$ is an (algebra--)
homomorphism $
 \rho: A\to \mathbb C.
$

 An arbitrary element  $g$  of $\overline{A}$ can be uniquely represented as
$$
 \sum_{k=0}^\infty g_k,\quad \quad \deg(g_i)=i,\quad i\in\mathbb Z_{\ge 0}.
$$
Define the $\rho$--seminorm on (a subset of) $\overline{A}$ through
$$
 \| g\|_\rho= \sum_{k=0}^\infty |\rho(g_k)|.
$$
Let $A_\rho\subset \overline{A}$ denote the subset of elements with finite $\rho$--seminorm in
$\overline{A}$. Clearly, $A_\rho$ is a subalgebra of $A$ and $\rho$ is uniquely extended to a
continuous (in $\rho$--seminorm) homomorphism from $A_\rho$ to $\mathbb C$, that we denote by the
same letter $\rho$.

\begin{definition}
 Let $P$ be a formal probability measure on $\mathcal N$ taking values in $\overline A$. A
 specialization $\rho$ of $A$ is called $P$--\emph{positive}, if for any $\eta\in\mathcal N$,
 $P(\eta)\in A_\rho$, $\rho(P(\eta))\ge 0$, and also the series
$\sum_{\eta\in\mathcal N} P(\eta)$ converges (to $1$) in $\rho$--seminorm.
\end{definition}
Clearly, any $P$--positive specialization $\rho$ defines a probability measure on $\mathcal N$
through the formula
$$
 {\rm Prob}(\eta)=\rho(P(\eta)).
$$

\section{Formal Macdonald processes and observables}

\label{Section_Formal_Macdonald}

\subsection{Formal Macdonald process}\label{formmacproc}

For two (finite or countable) sets of variables $X=(x_1,x_2,\dots)$ and $Y=(y_1,y_2,\dots)$, define
$\Pi(X;Y)$ through
$$
 \Pi(X;Y) =\prod_{i,j} \frac{(t x_i y_j;q)_{\infty}}{(x_i y_j;q)_{\infty}},
$$
where we used the $q$--Pochmamer symbol notation:
$$
 (a;q)_\infty=(1-a)(1-aq)(1-aq^2)\cdots.
$$
If the sets $X$ and $Y$ are countable, then $\Pi(X;Y)$ is an element of
$\overline{\Lambda_X}\otimes \overline{\Lambda_Y}$; one easily checks that it is related to the
generators $p_k(X)$, $p_k(Y)$ through the following formula:
$$
 \Pi(X;Y)=\exp\left(\sum_{k=1}^{\infty} \frac{1-t^k}{1-q^k} \cdot \frac{p_k(X) p_k(Y)}{k}\right).
$$
Note that $\Pi(X;Y)$ can be inverted, and $\Pi(X;Y)^{-1}$ is also an element of
$\overline{\Lambda_X}\otimes \overline{\Lambda_Y}$.

$\Pi(X;Y)$ can be also related to the Macdonald symmetric functions (see \cite[Chapter VI,
(2.7)]{M} and \eqref{eq_Mac_cauchy} above):
\begin{equation}
\label{eq_Cauchy_for_Mac}
 \Pi(X;Y)=\sum_{\lambda\in\Y} P_\lambda(X) Q_\lambda(Y).
\end{equation}
The definition also implies that for more than two sets of variables we have
\begin{equation}
\label{eq_Pi_product}
 \Pi(X^1,\dots X^k; Y^1,\dots,Y^m)=\prod_{i=1}^k \prod_{j=1}^m \Pi(X^i;Y^j).
\end{equation}

\begin{definition}
Take two countable sets of variables $A$ and $B$. The formal Macdonald measure $\MF_{A,B}$ is a
formal probability measure on $\Y$ taking values in
$\overline{\Lambda_A}\otimes\overline{\Lambda_B}$ such that
$$
 \MF_{A,B}(\lambda) = \frac{ P_\lambda(A) Q_\lambda(B)} {\Pi(A;B)},\quad \lambda\in \Y.
$$
\end{definition}

\begin{definition}
\label{Definition_mac_pro} Fix integer $N>0$ and $2N$ sets of variables
$\mathbf{A}=(A^1,\dots,A^N)$, $\mathbf B=(B^1,\dots,B^N)$. The formal Macdonald process
$\MPF_{N,\mathbf{A},\mathbf{B}}$ is a formal probability measure on $\Y^N$ taking values in
$$
 \overline{\Lambda_{A^1}}\otimes \dots \otimes  \overline{\Lambda_{A^N}}\otimes
 \overline{\Lambda_{B^1}}\otimes \dots  \otimes \overline{\Lambda_{B^N}}
$$
such that
\begin{equation}
\label{eq_definition_of_Mac_pro_classical}
 \MPF_{N,\mathbf{A},\mathbf{B}}(\lambda^1,\dots,\lambda^N) =
 \frac{P_{\lambda^1}(A^1) \Psi_{\lambda^2,\lambda^{1}}(A^2;B^{1})
\Psi_{\lambda^3,\lambda^{2}}(A^3;B^{2}) \cdots  \Psi_{\lambda^N,\lambda^{N-1}}(A^N;B^{N-1})
Q_{\lambda^N}(B^N)}{ \prod_{1\le \alpha\le \beta \le N} \Pi(A^\alpha; B^\beta)}
\end{equation}
where
$$
\Psi_{\lambda,\mu}(A;B)=\sum_{\nu\in\Y} P_{\lambda/\nu}(A) Q_{\mu/\nu}(B).
$$
\end{definition}
{\bf Remark.} Definition \ref{Definition_mac_pro} is a generalization of the definition of the
Schur process of \cite{OR_Schur} that arises when $q=t$.

\smallskip

The fact that the formal Macdonald measure is a formal probability measure on $\Y$ is immediate from
\eqref{eq_Cauchy_for_Mac}. For the formal Macdonald process this fact is a bit more involved to see, and so
we provide a proof in Proposition \ref{Prop_mass_one}. In what follows we will actually use an
equivalent definition of the Macdonald process which we now present.
\begin{proposition}
\label{Prop_Mac_pro_alternative} In the settings of Definition \ref{Definition_mac_pro} we have
\begin{eqnarray}
\label{eq_definition_of_Mac_pro}
\nonumber \MPF_{N,\mathbf{A},\mathbf{B}}(\lambda^1,\dots,\lambda^N) &=& \frac{1}{ \prod_{1\le \alpha\le \beta \le N} \Pi(A^i; B^j)}\\
&&\times
 Q_{\lambda^N}(B^N) \l P_{\lambda^N}(A^N,Y^{N-1}), Q_{\lambda^{N-1}}(Y^{N-1},B^{N-1}) \r_{Y^{N-1}} \\
\nonumber&& \times \l P_{\lambda^{N-1}}(A^{N-1},Y^{N-2}), Q_{\lambda^{N-2}}(Y^{N-2},B^{N-2}) \r_{Y^{N-2}}\\
\nonumber&&\times  \cdots \times  \l P_{\lambda^2}(A^2,Y^1), Q_{\lambda^1}(Y^1,B^1) \r_{Y^1} P_{\lambda^1}(A^1).
\end{eqnarray}
\end{proposition}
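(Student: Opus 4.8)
The plan is to reduce the proposition to one identity between the branching coefficients $\Psi_{\lambda,\mu}$ and the Macdonald pairing, and then substitute it level by level into \eqref{eq_definition_of_Mac_pro_classical}. Comparing \eqref{eq_definition_of_Mac_pro_classical} with the asserted formula \eqref{eq_definition_of_Mac_pro}, everything follows once we establish, for arbitrary partitions $\lambda,\mu$ and arbitrary countable sets of variables $A,B$, the equality
\[
 \Psi_{\lambda,\mu}(A;B)=\bigl\langle P_\lambda(A,Y),\,Q_\mu(Y,B)\bigr\rangle_Y ,
\]
the pairing being taken in the auxiliary variables $Y$. Indeed, applying this with $\lambda=\lambda^{k+1}$, $\mu=\lambda^{k}$, $A=A^{k+1}$, $B=B^{k}$, $Y=Y^{k}$ for $k=1,\dots,N-1$ converts each factor $\Psi_{\lambda^{k+1},\lambda^{k}}(A^{k+1};B^{k})$ in the numerator of \eqref{eq_definition_of_Mac_pro_classical} into the corresponding pairing appearing in \eqref{eq_definition_of_Mac_pro}, while the outer factors $P_{\lambda^1}(A^1)$, $Q_{\lambda^N}(B^N)$ and the denominator $\prod_{1\le\alpha\le\beta\le N}\Pi(A^\alpha;B^\beta)$ are untouched.

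\textbf{Proof of the single-level identity.}
To prove the displayed equality I would use the skew expansion \eqref{eq_skew} just once, to peel off the $B$--variables: writing the union $(Y,B)$ gives $Q_\mu(Y,B)=\sum_{\nu} Q_\nu(Y)\,Q_{\mu/\nu}(B)$, a \emph{finite} sum, since $Q_{\mu/\nu}=0$ unless $\nu\subseteq\mu$. Because the Macdonald pairing $\langle\cdot,\cdot\rangle_Y$ is bilinear and $Q_{\mu/\nu}(B)\in\Lambda_B$ plays the role of a scalar (an element of the target algebra) for this pairing, it can be pulled out:
\[
 \bigl\langle P_\lambda(A,Y),\,Q_\mu(Y,B)\bigr\rangle_Y=\sum_{\nu}\bigl\langle P_\lambda(A,Y),\,Q_\nu(Y)\bigr\rangle_Y\; Q_{\mu/\nu}(B).
\]
Now I would invoke the alternative description of skew Macdonald functions recorded in the excerpt, namely $P_{\lambda/\nu}(A)=\langle P_\lambda(A,Y),Q_\nu(Y)\rangle_Y$, to rewrite the summand as $P_{\lambda/\nu}(A)\,Q_{\mu/\nu}(B)$; summing over $\nu$ yields exactly $\sum_\nu P_{\lambda/\nu}(A)\,Q_{\mu/\nu}(B)=\Psi_{\lambda,\mu}(A;B)$, as wanted.

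\textbf{Where the care is needed.}
This argument is essentially bookkeeping, so I do not anticipate a genuine obstacle; the two points requiring attention are conventions. First, one must apply \eqref{eq_skew} with the right assignment of variable sets, i.e.\ expand $Q_\mu(Y,B)$ so that its $Y$--part is an honest $Q_\nu(Y)$ and not a skew function — this is precisely what makes the subsequent pairing collapse, via $P_{\lambda/\nu}(A)=\langle P_\lambda(A,Y),Q_\nu(Y)\rangle_Y$, to $P_{\lambda/\nu}(A)$. Second, one should check that the pairing used here is the Macdonald pairing $\langle\cdot,\cdot\rangle_{\mathcal C}$ of the general definition with $\mathcal C=\Lambda_Y$ (and scalar factors living in $\Lambda_A$, $\Lambda_B$), so that pulling out $Q_{\mu/\nu}(B)$ and performing the final substitution are both legitimate; since all sums over $\nu$ are finite, no completion or convergence subtleties enter. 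Finally, re-inserting the resulting single-level identities into \eqref{eq_definition_of_Mac_pro_classical} and regrouping the auxiliary pairings as in \eqref{eq_definition_of_Mac_pro} is immediate.
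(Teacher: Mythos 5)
Your proof is correct and follows essentially the same route as the paper's: both reduce the proposition to the single-level identity $\Psi_{\lambda,\mu}(A;B)=\langle P_\lambda(A,Y),Q_\mu(Y,B)\rangle_Y$ and establish it via the skew expansion \eqref{eq_skew} together with the $P$--$Q$ duality. The only cosmetic difference is that you expand just the $Q$-side and invoke the recorded alternative definition $P_{\lambda/\nu}(A)=\langle P_\lambda(A,Y),Q_\nu(Y)\rangle_Y$, whereas the paper expands both arguments and applies $\langle P_\nu,Q_{\nu'}\rangle=\delta_{\nu,\nu'}$ directly.
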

\begin{proof}
 This follows from the identity (we use \eqref{eq_skew})
\begin{eqnarray}
\label{eq_scalar_product_evaluation}
\nonumber&&\l P_{\lambda^k}(A^k,Y^{k-1}),
Q_{\lambda^{k-1}}(Y^{k-1},B^{k-1}) \r_{Y^{k-1}}\\
&& = \l \sum_{\nu\in\Y} P_{\lambda^k/\nu}(A^k)
P_\nu(Y^{k-1}), \sum_{\nu'} Q_{\lambda^{k-1}/\nu'}(B^{k-1}) Q_{\nu'}(Y^{k-1}) \r_{Y^{k-1}}
\\
\nonumber&&=\sum_{\nu\in\Y} P_{\lambda^k/\nu}(A^k)
Q_{\lambda^{k-1}/\nu}(B^{k-1})=\Psi_{\lambda^k,\lambda^{k-1}}(A^k;B^{k-1}).\qedhere
\end{eqnarray}
\end{proof}

\begin{proposition} \label{Prop_mass_one}We have
$$
 \sum_{\lambda^1,\dots,\lambda^N\in\Y} \MPF_{N,\mathbf{A},\mathbf{B}}(\lambda^1,\dots,\lambda^N) =1.
$$
\end{proposition}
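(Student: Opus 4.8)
The plan is to compute the sum by summing over $\lambda^1,\lambda^2,\dots,\lambda^N$ one partition at a time, in that order, each step collapsing an infinite sum via a Cauchy--type identity; the $\Pi$--factors produced along the way will at the end cancel exactly the denominator $\prod_{1\le\alpha\le\beta\le N}\Pi(A^\alpha;B^\beta)$ of \eqref{eq_definition_of_Mac_pro_classical}, leaving $1$.

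Concretely, I would expand each $\Psi_{\lambda^k,\lambda^{k-1}}(A^k;B^{k-1})=\sum_\nu P_{\lambda^k/\nu}(A^k)Q_{\lambda^{k-1}/\nu}(B^{k-1})$ and prove by induction on $k=1,\dots,N$ that summing the numerator of $\MPF_{N,\mathbf A,\mathbf B}(\lambda^1,\dots,\lambda^N)$ over $\lambda^1,\dots,\lambda^{k-1}$ yields $\big(\prod_{1\le i\le j\le k-1}\Pi(A^i;B^j)\big)\,P_{\lambda^k}(A^1,\dots,A^k)\,\prod_{m=k+1}^N\Psi_{\lambda^m,\lambda^{m-1}}(A^m;B^{m-1})\,Q_{\lambda^N}(B^N)$. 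For the inductive step with $k<N$, the only $\lambda^k$--dependence is through $P_{\lambda^k}(A^1,\dots,A^k)$ and, via $\Psi_{\lambda^{k+1},\lambda^k}$, through $Q_{\lambda^k/\nu}(B^k)$; one then applies the skew Cauchy identity $\sum_\lambda P_\lambda(X)Q_{\lambda/\nu}(Y)=\Pi(X;Y)P_\nu(X)$ with $X=(A^1,\dots,A^k)$ and $Y=B^k$, then the branching rule $\sum_\nu P_\nu(A^1,\dots,A^k)P_{\lambda^{k+1}/\nu}(A^{k+1})=P_{\lambda^{k+1}}(A^1,\dots,A^{k+1})$ coming from \eqref{eq_skew}, and finally \eqref{eq_Pi_product} to merge $\Pi(A^1,\dots,A^k;B^k)=\prod_{i=1}^k\Pi(A^i;B^k)$ into the running product. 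The skew Cauchy identity can be quoted from \cite[Chapter VI]{M}, or deduced from \eqref{eq_Cauchy_for_Mac}, \eqref{eq_Pi_product}, the pairing formula $P_{\lambda/\mu}(X)=\langle P_\lambda(X,Y),Q_\mu(Y)\rangle_Y$ and $\langle Q_\mu,P_\nu\rangle=\delta_{\mu\nu}$; equivalently, starting from the scalar--product form in Proposition \ref{Prop_Mac_pro_alternative} makes the same manipulation essentially automatic. The final step $k=N$ reduces to $\sum_{\lambda^N}P_{\lambda^N}(A^1,\dots,A^N)Q_{\lambda^N}(B^N)=\Pi(A^1,\dots,A^N;B^N)=\prod_{i=1}^N\Pi(A^i;B^N)$ by \eqref{eq_Cauchy_for_Mac} and \eqref{eq_Pi_product}; combined with the accumulated factors this gives $\prod_{1\le i\le j\le N}\Pi(A^i;B^j)$, which cancels the denominator.

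The point that requires genuine care --- and the only obstacle beyond bookkeeping --- is justifying that all the infinite sums and rearrangements above are legitimate in the completed graded algebra $\overline{\Lambda_{A^1}}\otimes\cdots\otimes\overline{\Lambda_{A^N}}\otimes\overline{\Lambda_{B^1}}\otimes\cdots\otimes\overline{\Lambda_{B^N}}$. For this I would compute the lower degree: using homogeneity of $P_{\lambda^1}(A^1)$, of $Q_{\lambda^N}(B^N)$ and of the skew functions inside the $\Psi$'s, a short calculation shows that every monomial of $\MPF_{N,\mathbf A,\mathbf B}(\lambda^1,\dots,\lambda^N)$ has total degree $2\big(\sum_{k=1}^N|\lambda^k|-\sum_{k=1}^{N-1}|\nu_k|\big)$ for the relevant indices $\nu_k$, which are constrained by $\nu_k\subseteq\lambda^k$ and $\nu_k\subseteq\lambda^{k+1}$, hence at least $|\lambda^1|+|\lambda^N|+\sum_{k=1}^{N-1}\big||\lambda^{k+1}|-|\lambda^k|\big|$. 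This bound forces only finitely many $(\lambda^1,\dots,\lambda^N)$ to lie below any fixed degree, so the family is summable and Fubini--type regrouping --- in particular summing out $\lambda^1$ first, then $\lambda^2$, and so on --- is valid; and at the $k$--th step the presence of the factor $P_{\lambda^k}(A^1,\dots,A^k)$, homogeneous of degree $|\lambda^k|$ in those variables, makes convergence of $\sum_{\lambda^k}$ in the graded topology immediate. All remaining verifications are the routine symmetric--function manipulations indicated above.
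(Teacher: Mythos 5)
Your argument is correct, but it follows a genuinely different route from the paper's. The paper first rewrites $\MPF_{N,\mathbf A,\mathbf B}$ as a chain of Macdonald pairings (Proposition \ref{Prop_Mac_pro_alternative}), sums over all of $\lambda^1,\dots,\lambda^N$ simultaneously using only the plain Cauchy identity \eqref{eq_Cauchy_for_Mac} --- each sum $\sum_\lambda P_\lambda Q_\lambda$ becoming a $\Pi$ --- and then collapses the nested pairings via Proposition \ref{proposition_evaluation_of_scalar_product} in the form $\langle \Pi(U;Y^k),\Pi(Y^k;V)\rangle_{Y^k}=\Pi(U;V)$, finishing with \eqref{eq_Pi_product}. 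You instead work directly with the product formula \eqref{eq_definition_of_Mac_pro_classical} and telescope level by level, with the skew Cauchy identity $\sum_\lambda P_\lambda(X)Q_{\lambda/\nu}(Y)=\Pi(X;Y)P_\nu(X)$ and the branching rule from \eqref{eq_skew} driving the induction; your derivation of the skew Cauchy identity from \eqref{eq_Cauchy_for_Mac} and the pairing $P_{\lambda/\mu}(X)=\langle P_\lambda(X,Y),Q_\mu(Y)\rangle_Y$ is sound. The two mechanisms are equivalent in substance (your skew Cauchy step is what the computation \eqref{eq_scalar_product_evaluation} together with Proposition \ref{proposition_evaluation_of_scalar_product} encodes), but yours is more elementary in that it bypasses the pairing formalism, and it yields as a byproduct the intermediate statement that the partial sums produce $P_{\lambda^k}(A^1,\dots,A^k)$ times the remaining factors --- essentially the projection property of Proposition \ref{prop_Mac_pro_restriction}. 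You are also more explicit than the paper about summability: the lower--degree bound $|\lambda^1|+|\lambda^N|+\sum_{k=1}^{N-1}\bigl||\lambda^{k+1}|-|\lambda^k|\bigr|$ is correct (it follows from $|\nu_k|\le\min(|\lambda^k|,|\lambda^{k+1}|)$), it does force only finitely many tuples below any fixed degree, and hence it legitimizes the term-by-term rearrangement in the graded topology --- a point the paper leaves implicit in its topological setup.
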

\begin{proof}
 Summing \eqref{eq_definition_of_Mac_pro} over $\lambda^1,\dots,\lambda^N$ and using
 \eqref{eq_Cauchy_for_Mac} we get
\begin{align*}
 \frac{1}{ \prod_{\alpha\le \beta} \Pi(A^\alpha; B^\beta)}\, \dot
  \l \Pi\left(B^N;A^N,Y^{N-1}\right), \l \Pi\left(Y^{N-1},B^{N-1}; A^{N-1},Y^{N-2}\right),  \\ \cdots
 \l \Pi( B^2,Y^2;A^2,Y^1), \Pi(Y^1,B^1;A^1) \r_{Y^1}\dots \r_{Y^{N-1}}.
\end{align*}
It remains to use Proposition \ref{proposition_evaluation_of_scalar_product} in the form
$$
 \l \Pi(U;Y^k), \Pi(Y^k;V) \r_{Y^k} = \Pi(U;V)
$$
for $k=1,\dots, N-1$ and appropriate sets of variables $U$ and $V$, as well as
\eqref{eq_Pi_product}.
\end{proof}

Two simple, yet important properties of formal Macdonald processes are summarized in the following
propositions.

\begin{proposition}
\label{prop_Mac_pro_empty} In the notations of Definition \ref{Definition_mac_pro}, let $\phi^i_0$
denote the map
$$
 \phi^i_0: \Lambda_{A^{i+1}}\otimes \Lambda_{B^i}\to\mathbb C, \quad \quad \phi^i_0(f\otimes g)=\phi_0(f)\phi_0(g),
$$
where $\phi_0$ is the constant term map, as above. Further, let $\mathbf{A}^{(j)}$,
$\mathbf{B}^{(j)}$ denote the sets of variables $\mathbf{A}\setminus A^j$ and $\mathbf{B}\setminus
B^j$, respectively. For $1\le i\le N-1$ consider the formal measure
$$M^i=\phi^i_0\left(
\MPF_{N,\mathbf{A},\mathbf{B}}(\lambda^1,\dots,\lambda^N) \right).$$ Then for all sequences
$(\lambda^1,\dots,\lambda^N)\in\Y^N$ in the support of $M^i$, we have $\lambda^{i}=\lambda^{i+1}$.
Furthermore, the restriction of $M^i$ to
$(\lambda^1,\dots,\lambda^{i-1},\lambda^{i+1},\dots,\lambda^N)$ is the formal Macdonald process
$\MPF_{N-1,\mathbf{A}^{(i+1)},\mathbf{B}^{(i)}}$.
\end{proposition}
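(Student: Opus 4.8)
The plan is to compute the action of $\phi^i_0$ on formula \eqref{eq_definition_of_Mac_pro} directly, using the fact that $\phi_0$ kills all power sums $p_k$ and hence acts on a symmetric function by setting all its variables to zero. The only place where the sets of variables $A^{i+1}$ and $B^i$ appear in \eqref{eq_definition_of_Mac_pro} is through the single factor $\l P_{\lambda^{i+1}}(A^{i+1},Y^i), Q_{\lambda^i}(Y^i,B^i)\r_{Y^i}$ (for $i\le N-1$), together with the normalization factors $\Pi(A^{i+1};B^\beta)$ for $\beta\ge i+1$ and $\Pi(A^\alpha;B^i)$ for $\alpha\le i$. Since $\phi_0$ is a (continuous) algebra homomorphism, applying $\phi^i_0$ amounts to substituting the empty specialization into $A^{i+1}$ and into $B^i$ wherever they occur.

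The key computation is the first step: evaluating $\phi^i_0$ on the scalar-product factor. Setting $A^{i+1}$ to the empty set of variables, the proof of Proposition \ref{Prop_Mac_pro_alternative} (via \eqref{eq_scalar_product_evaluation}) shows
$$
\l P_{\lambda^{i+1}}(\varnothing,Y^i), Q_{\lambda^i}(Y^i,B^i)\r_{Y^i} = \Psi_{\lambda^{i+1},\lambda^i}(\varnothing;B^i) = \sum_{\nu} P_{\lambda^{i+1}/\nu}(\varnothing)\, Q_{\lambda^i/\nu}(B^i).
$$
Now $P_{\mu/\nu}(\varnothing)$ is a homogeneous symmetric function of degree $|\mu|-|\nu|$ evaluated at zero, hence equals $0$ unless $\mu=\nu$, in which case $P_{\mu/\mu}(\varnothing)=1$; likewise one uses $B^i$ empty, or more directly just observes that the surviving term forces $\nu=\lambda^{i+1}$ and then the factor becomes $Q_{\lambda^i/\lambda^{i+1}}(B^i)$. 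Applying $\phi_0$ to $B^i$ as well (i.e.\ also setting $B^i$ empty inside this remaining factor, as $\phi^i_0$ instructs) gives $Q_{\lambda^i/\lambda^{i+1}}(\varnothing)$, which is $0$ unless $\lambda^i=\lambda^{i+1}$ and $1$ otherwise. This establishes that $M^i$ is supported on $\{\lambda^i=\lambda^{i+1}\}$. (Alternatively, and more cleanly, one first sets $A^{i+1}=\varnothing$ to reduce the factor to $Q_{\lambda^i/\lambda^{i+1}}(B^i)$, then sets $B^i=\varnothing$.)

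For the second assertion, on the support $\lambda^i=\lambda^{i+1}=:\mu$ the factor above equals $1$, so it simply disappears from \eqref{eq_definition_of_Mac_pro}; the factors $\Psi_{\lambda^{i+1},\lambda^i}$ and $\Psi_{\lambda^{i+2},\lambda^{i+1}}$ merge into a single $\Psi_{\lambda^{i+2},\mu}(A^{i+2};B^{i+1})$ with the chain of scalar products for indices $\{1,\dots,i-1,\mu,i+2,\dots,N\}$ reindexed as a length-$(N-1)$ chain. In the denominator, $\prod_{1\le\alpha\le\beta\le N}\Pi(A^\alpha;B^\beta)$ loses exactly the factors $\Pi(A^{i+1};B^\beta)$ ($i+1\le\beta\le N$) and $\Pi(A^\alpha;B^i)$ ($1\le\alpha\le i$) upon $\phi^i_0$, since $\Pi(\varnothing;Y)=\Pi(X;\varnothing)=1$ by \eqref{eq_Pi_product} and the exponential formula for $\Pi$; using $\alpha\le i<i+1\le\beta$ one checks the product over the surviving pairs is precisely $\prod_{1\le\alpha\le\beta\le N-1}\Pi\big((\mathbf A^{(i+1)})^\alpha;(\mathbf B^{(i)})^\beta\big)$ after relabelling. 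Matching numerator and denominator term-by-term against Definition \ref{Definition_mac_pro} for $\MPF_{N-1,\mathbf A^{(i+1)},\mathbf B^{(i)}}$ finishes the proof.

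The main obstacle is purely bookkeeping: one must be careful that $\phi^i_0$ annihilates $A^{i+1}$ and $B^i$ \emph{everywhere} they occur (both in the $\Psi$-factor and in the normalization), and that the reindexing of the interlacing-chain of scalar products — in particular the fusion $\Psi_{i+1,i}\cdot\Psi_{i+2,i+1}\mapsto\Psi_{i+2,i}$ realized at the level of the nested pairings $\l\,\cdot\,,\,\cdot\,\r_{Y^j}$ — lines up exactly with the length-$(N-1)$ process. There is no analytic subtlety here since everything is an identity in the completed graded algebra and $\phi_0$ is continuous.
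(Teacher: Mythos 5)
Your proof is correct and takes essentially the same route as the paper's: the paper's entire argument consists of the two identities $\phi^i_0\left(\Psi_{\lambda,\mu}(A^{i+1};B^i)\right)=\delta_{\lambda,\mu}$ and $\phi^i_0\left(\Pi(A^{i+1};B^j)\right)=\phi^i_0\left(\Pi(A^j;B^i)\right)=1$, which are exactly the two facts you verify in detail (via homogeneity of $P_{\mu/\nu}$, $Q_{\mu/\nu}$ and the product form of the normalization). The reindexing bookkeeping you spell out is left implicit in the paper but checks out.
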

\begin{proof} This readily follows from the identities
$$
 \phi^i_0 \left(\Psi_{\lambda,\mu}(A^{i+1},B^i)\right) = \delta_{\lambda,\mu},\qquad \textrm{and} \qquad
 \phi^i_0 \left(\Pi(A^{i+1};B^j) \right)= \phi^i_0 \left(\Pi(A^j;B^i) \right) = 1.\qedhere
$$
\end{proof}

\begin{proposition}
\label{prop_Mac_pro_restriction} In the notations of Definition \ref{Definition_mac_pro}, let
$\mathbf{A}^{i \cup i+1}$ denote $N-1$ sets of variables
 $\{A_1, A_2, \dots , A_{i-1}, (A_i,
A_{i+1}), A_{i+2}, \dots, A_N\}$, i.e.\ we unite $A^i$ and $A^{i+1}$ into a single set. Similarly
define $\mathbf{B}^{i\cup i+1}$. Then the restriction of
$\MPF_{N,\mathbf{A},\mathbf{B}}(\lambda^1,\dots,\lambda^N)$ to
$(\lambda^1,\dots,\lambda^{i-1},\lambda^{i+1},\dots,\lambda^N)$, $1\le i \le N$,is the formal
Macdonald measure $\MPF_{N-1,\mathbf{A}^{i \cup i+1},\mathbf{B}^{i-1 \cup i}}$.
\end{proposition}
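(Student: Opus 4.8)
Here is how I would proceed.

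\textbf{Strategy.} The plan is to reduce the statement to a single composition identity for the transition weights $\Psi$. Introduce boundary data $\lambda^0:=\varnothing$, $\lambda^{N+1}:=\varnothing$, $B^0:=\varnothing$, $A^{N+1}:=\varnothing$ (the empty set of variables); since $\Psi_{\lambda,\varnothing}(A;B)=P_\lambda(A)$ and $\Psi_{\varnothing,\lambda}(A;B)=Q_\lambda(B)$, formula \eqref{eq_definition_of_Mac_pro_classical} takes the uniform shape
$$
\MPF_{N,\mathbf{A},\mathbf{B}}(\lambda^1,\dots,\lambda^N)=\frac{1}{D_N}\prod_{k=1}^{N+1}\Psi_{\lambda^k,\lambda^{k-1}}(A^k;B^{k-1}),\qquad D_N:=\prod_{1\le\alpha\le\beta\le N}\Pi(A^\alpha;B^\beta).
$$
To compute the marginal over $\lambda^i$ for a fixed $1\le i\le N$, I would isolate the only two factors containing $\lambda^i$, namely $\Psi_{\lambda^{i+1},\lambda^i}(A^{i+1};B^i)$ and $\Psi_{\lambda^i,\lambda^{i-1}}(A^i;B^{i-1})$, and collapse their $\lambda^i$-sum.

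\textbf{The composition identity.} The key step is: for partitions $\nu,\kappa$ and sets of variables $A,A',B,B'$,
$$
\sum_{\mu\in\Y}\Psi_{\nu,\mu}(A;B)\,\Psi_{\mu,\kappa}(A';B')=\Pi(A';B)\,\Psi_{\nu,\kappa}\big((A,A');(B,B')\big).
$$
I would prove this by expanding $\Psi_{\nu,\mu}(A;B)=\sum_\sigma P_{\nu/\sigma}(A)Q_{\mu/\sigma}(B)$ and $\Psi_{\mu,\kappa}(A';B')=\sum_\tau P_{\mu/\tau}(A')Q_{\kappa/\tau}(B')$, interchanging the summations (legitimate in the completed graded algebra, where the relevant degrees tend to infinity), evaluating the inner sum by the skew Cauchy identity for Macdonald functions \cite[Ch.~VI]{M},
$$
\sum_{\mu}P_{\mu/\tau}(A')Q_{\mu/\sigma}(B)=\Pi(A';B)\sum_{\rho}P_{\sigma/\rho}(A')Q_{\tau/\rho}(B),
$$
and then recombining the $\sigma$- and $\tau$-sums by the branching rules $\sum_\sigma P_{\nu/\sigma}(A)P_{\sigma/\rho}(A')=P_{\nu/\rho}(A,A')$ and $\sum_\tau Q_{\tau/\rho}(B)Q_{\kappa/\tau}(B')=Q_{\kappa/\rho}(B,B')$, followed by the definition of $\Psi$ once more. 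The $\lambda^i$-sum in the process converges because the lowest degree of $\Psi_{\lambda^{i+1},\lambda^i}(A^{i+1};B^i)\Psi_{\lambda^i,\lambda^{i-1}}(A^i;B^{i-1})$ is at least $\big||\lambda^{i+1}|-|\lambda^i|\big|+\big||\lambda^i|-|\lambda^{i-1}|\big|$, which tends to infinity. One also checks that the identity specializes correctly when $\nu=\varnothing$ or $\kappa=\varnothing$, which covers the boundary cases $i=1$ and $i=N$.

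\textbf{Assembling and the normalization.} Applying the composition identity with $\nu=\lambda^{i+1}$, $\kappa=\lambda^{i-1}$, $A=A^{i+1}$, $B=B^i$, $A'=A^i$, $B'=B^{i-1}$, and using the symmetry of $\Psi$ under joining variable sets, the $\lambda^i$-summed numerator becomes $\Pi(A^i;B^i)$ times $\Psi_{\lambda^{i+1},\lambda^{i-1}}\big((A^i,A^{i+1});(B^{i-1},B^i)\big)$ times the remaining, unchanged factors. Relabeling the surviving partitions and the merged/shifted variable sets (so $\tilde A^i=(A^i,A^{i+1})$, $\tilde B^{i-1}=(B^{i-1},B^i)$), this is $\Pi(A^i;B^i)$ times the $\Psi$-product numerator of $\MPF_{N-1,\mathbf{A}^{i\cup i+1},\mathbf{B}^{i-1\cup i}}$. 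It then remains to show $D_N=\Pi(A^i;B^i)\,\widetilde D_{N-1}$, where $\widetilde D_{N-1}=\prod_{1\le a\le b\le N-1}\Pi(\tilde A^a;\tilde B^b)$: expanding each $\Pi(\tilde A^a;\tilde B^b)$ into a product of $\Pi(A^\alpha;B^\beta)$'s via \eqref{eq_Pi_product}, every original index $\alpha$ (resp.\ $\beta$) lies in the index set of $\tilde A^a$ (resp.\ $\tilde B^b$) for a unique $a$ (resp.\ $b$), and a short inspection of the four index ranges shows the pairs so produced are precisely all $(\alpha,\beta)$ with $1\le\alpha\le\beta\le N$ except $(i,i)$; hence $\widetilde D_{N-1}=D_N/\Pi(A^i;B^i)$. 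The factors $\Pi(A^i;B^i)$ cancel, yielding the claim. In the edge cases the symbols $\mathbf{B}^{0\cup 1}$, $\mathbf{A}^{N\cup N+1}$ are read with $B^0=\varnothing$, $A^{N+1}=\varnothing$; the nominally merged empty component is immaterial, as $\Psi_{\lambda,\varnothing}$ and $\Psi_{\varnothing,\lambda}$ ignore that argument.

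\textbf{Main obstacle.} No step is deep, but the composition identity carries the real content, and it rests on the skew Cauchy identity; one must be a little careful that the rearrangements and the $\lambda^i$-summation are valid inside the completed graded algebra. The genuinely fiddly point is the normalization bookkeeping together with the correct reading of the edge cases $i=1$ and $i=N$ — that is where a careless write-up is most likely to go wrong.
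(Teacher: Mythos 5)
Your proof is correct and takes essentially the same route as the paper's: the composition identity you isolate is exactly the chain of equalities the paper displays, established the same way via the skew Cauchy identity (\cite[Chapter VI, Section 7, Exercise 6]{M}) followed by the branching rules \eqref{eq_skew}. Your explicit bookkeeping of the normalization constants via \eqref{eq_Pi_product} and your uniform treatment of the boundary cases $i=1$ and $i=N$ merely fill in details the paper leaves to ``the argument is similar.''
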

\begin{proof} For $1<i<N$ this follows from the following identity, which is a combination of
\cite[Exercise 6, Section 7, Chapter VI]{M} and \eqref{eq_skew}:
\begin{align*}
& \sum_{\lambda^i\in\Y} \Psi_{\lambda^{i+1},\lambda^i}(A^{i+1};B^i) \Psi_{\lambda^{i},\lambda^{i-1}}(A^{i};B^{i-1})\\
& = \sum_{\lambda^i,\mu,\nu\in\Y} P_{\lambda^{i+1}/\mu}(A^{i+1})Q_{\lambda^i/\mu}(B^i)
 P_{\lambda^i/\nu}(A^i) Q_{\lambda^{i-1}/\nu}(B^{i-1})\\
& = \Pi(A^i;B^i)\sum_{\kappa,\mu,\nu\in\Y} P_{\lambda^{i+1}/\mu}(A^{i+1})P_{
 \mu/\kappa}(A^i)  Q_{\nu/\kappa}(B^i) Q_{\lambda^{i-1}/\nu}(B^{i-1})\\
& = \Pi(A^i;B^i) \sum_{\kappa\in\Y} P_{\lambda^{i+1}/\mu}(A^{i+1},A^i) Q_{\lambda^{i-1}/\kappa}(B^i,B^{i-1})\\
& = \Pi(A^i;B^i) \Psi_{\lambda^{i+1},\lambda^{i-1}}\big((A^{i+1},A^i);(B^i,B^{i-1})\big).
\end{align*}
For $i=1$ and $i=N$ the argument is similar.
\end{proof}

\subsection{Single level observables}
For two sets of variables $X=(x_1,x_2,\dots)$ and $Y=(y_1,y_2,\dots)$, let $\HH(X;Y)$ be the
Hall-Littlewood (i.e., $q=0$) specialization of $\Pi$:
\begin{equation}
\label{eq_H_function}
 \HH(X;Y) =\prod_{i,j} \frac{1-t x_i y_j}{1-x_i y_j}=\exp\left(\sum_{k=1}^{\infty} (1-t^k)
 \frac{p_k(X) p_k(Y)}{k}\right).
\end{equation}

\begin{definition}
\label{Definition_observable}
 The function $\O_r: \Y\to \mathbb C$ is defined through
$$
 \O_r (\lambda)= \lim_{N\to\infty} e_r(q^{-\lambda_1},q^{-\lambda_2}t,\dots, q^{-\lambda_N}
 t^{N-1}),\quad r\ge 1,
$$
where $e_r$ is the elementary symmetric polynomial and $\O_0 (\lambda)=1$.
\end{definition}
For example,
$$
\O_1(\lambda)=\lim_{N\to\infty} \sum_{i=1}^N q^{-\lambda_i} t^{i-1}=\sum_{i=1}^{\ell(\lambda)}
q^{-\lambda_i} t^{i-1} + \frac{t^{\ell(\lambda)}}{1-t}.
$$

Our first result is the computation of the expectation of the observables $\O_r(\lambda)$ with
respect to a formal Macdonald measure.
\begin{proposition} For two sets of variables $X$ and $Y$ we have
\label{proposition_restated_computation_from_BigMac}
\begin{equation}
\label{eq_main_lemma}
 \sum_{\lambda\in\Y} \O_r(\lambda) \MF_{X,Y}(\lambda)
= \frac{1}{(2\pi \i)^r r!} \oint\limits_{|w_1|=1}\dots\oint\limits_{|w_r|=1} \det \left[ \frac{1}{
 w_k-tw_\ell}\right]
 \prod_{j=1}^r \HH\big(w_j;X\big)\, \HH\big((qw_j)^{-1}; Y\big) dw_j.
\end{equation}
\end{proposition}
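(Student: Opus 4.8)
The plan is to realize $\O_r(\lambda)$ as a limit (as the number of variables tends to infinity) of an eigenvalue of an explicit $q$--difference operator acting on the Macdonald polynomial $P_\lambda$, to apply that operator to the Cauchy identity $\Pi(X;Y)=\sum_{\lambda}P_\lambda(X)Q_\lambda(Y)$, and then to identify the outcome with the claimed contour integral. This follows the strategy of \cite{BigMac}, where the first Macdonald difference operator is used; the new ingredients are that one needs a higher--order operator and that the relevant eigenvalues involve the inverted quantities $q^{-\lambda_i}$.

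\emph{The operator.} Work first with $X$ a specialization having exactly $N$ nonzero variables $x_1,\dots,x_N$. Let $D_N^r(\,\cdot\,;Q,T)=T^{\binom r2}\sum_{|I|=r}\bigl(\prod_{i\in I,\,j\notin I}\tfrac{Tx_i-x_j}{x_i-x_j}\bigr)\prod_{i\in I}\tau_{Q,x_i}$ be Macdonald's $r$--th difference operator, where $\tau_{Q,x_i}$ is the shift $x_i\mapsto Qx_i$; it is diagonalized by the Macdonald polynomials $P_\lambda(x_1,\dots,x_N;Q,T)$ with eigenvalue $e_r(Q^{\lambda_1}T^{N-1},\dots,Q^{\lambda_N})$ \cite[Ch.~VI]{M}. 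Combining this with the symmetry $P_\lambda(\,\cdot\,;q,t)=P_\lambda(\,\cdot\,;q^{-1},t^{-1})$ (which follows from the triangularity and orthogonality characterizing $P_\lambda$ in Definition \ref{Macdef}, since the rescaling $p_k\mapsto(t/q)^{k/2}p_k$ carries the scalar product $\langle\,\cdot\,,\cdot\,\rangle$ with parameters $(q,t)$ to the one with parameters $(q^{-1},t^{-1})$), the renormalized operator $\widetilde D_N^r:=t^{(N-1)r}D_N^r(\,\cdot\,;q^{-1},t^{-1})$ satisfies
$$
\widetilde D_N^r\,P_\lambda(x_1,\dots,x_N)=e_r\bigl(q^{-\lambda_1},\,q^{-\lambda_2}t,\,\dots,\,q^{-\lambda_N}t^{N-1}\bigr)\,P_\lambda(x_1,\dots,x_N).
$$

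\emph{Action on the Cauchy kernel.} Apply $\widetilde D_N^r$ in the variables $x_1,\dots,x_N$ to $\Pi(x_1,\dots,x_N;Y)=\prod_{l=1}^N\Pi(x_l;Y)=\sum_{\lambda}P_\lambda(x_1,\dots,x_N)Q_\lambda(Y)$. The right--hand side becomes the observable--weighted sum; on the left, the shift $x_i\mapsto q^{-1}x_i$ multiplies $\Pi(x_i;Y)$ by $\HH(x_i/q;Y)$ (a short computation with $q$--Pochhammer symbols, using $(a/q;q)_\infty=(1-a/q)(a;q)_\infty$), and after the powers of $t$ combine — a short bookkeeping computation gives $t^{\binom r2}$ — one obtains, with $X=(x_1,\dots,x_N)$,
$$
\sum_{\ell(\lambda)\le N}e_r\bigl(q^{-\lambda_1},\dots,q^{-\lambda_N}t^{N-1}\bigr)\,\MF_{X,Y}(\lambda)
= t^{\binom r2}\sum_{\substack{I\subseteq\{1,\dots,N\}\\ |I|=r}}\ \prod_{\substack{i\in I\\ j\notin I}}\frac{x_i-tx_j}{x_i-x_j}\ \prod_{i\in I}\HH(x_i/q;Y).
$$

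\emph{Matching the contour integral and passing to the limit.} The remaining task is to check that this symmetric subset--sum is exactly the iterated--residue evaluation of the right--hand side of \eqref{eq_main_lemma}: one places the contours so that each $w_k$ collects the residues of $\HH(w_k;X)$ at the poles $w_k=x_i^{-1}$, observes that there $\HH\bigl((qw_k)^{-1};Y\bigr)=\HH(x_i/q;Y)$, that the residues of $\HH(w_k;X)$ generate the cross--terms $\prod_{i\in I,\,j\notin I}\tfrac{x_i-tx_j}{x_i-x_j}$, and that the Cauchy--type determinant $\det[1/(w_k-tw_\ell)]_{k,\ell=1}^r$ together with the factor $1/r!$ symmetrizes over the $r!$ ways of matching integration variables to poles while killing the colliding contributions; one also checks the overall constant works out to $t^{\binom r2}/r!$. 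Once this is done for each $N$, letting $N\to\infty$ yields the proposition for a general countable $X$: the eigenvalue converges to $\O_r(\lambda)$ by Definition \ref{Definition_observable}, $\HH(w_j;x_1,\dots,x_N)\to\HH(w_j;X)$, and all series converge coefficientwise in the graded topology (a general specialization of $X$ then follows by continuity). I expect this last matching step to be the main obstacle: lining up the combinatorics of the iterated residues, the signs, the choice of contour radii and orientation, and the precise constant requires care, while everything else is formal bookkeeping plus the two short identities noted above.
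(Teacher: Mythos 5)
Your finite-$N$ computation is sound and is essentially the paper's Lemma \ref{lemma_from_BigMac}: applying the $r$-th Macdonald difference operator with parameters $(q^{-1},t^{-1})$ (renormalized by $t^{(N-1)r}$) to the Cauchy kernel does produce the subset-sum $t^{\binom r2}\sum_{|I|=r}\prod_{i\in I,\,j\notin I}\tfrac{x_i-tx_j}{x_i-x_j}\prod_{i\in I}\HH(x_i/q;Y)$, and matching this with an iterated-residue evaluation of the integrand is the routine part. The genuine gap is in the step you dismiss as ``formal bookkeeping plus two short identities'': the passage from the finite-$N$ analytic identity to the formal statement. For the contour integral to equal the sum of residues at $w_j=x_i^{-1}$, the contours must \emph{enclose} those points; but the right-hand side of \eqref{eq_main_lemma} is defined algebraically as a constant-term extraction (expand $\HH(w_j;X)$ in nonnegative powers of $w_j$, $\HH((qw_j)^{-1};Y)$ in nonpositive powers, take the coefficient of $(w_1\cdots w_r)^{-1}$), which for a specialization with $R_2<|x_i|^{-1}<R_3$ corresponds analytically to contours of radius $R_1<R_2$ that do \emph{not} enclose the poles $x_i^{-1}$. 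These are different objects at finite $N$, and the discrepancy is not zero: the paper's proof spends most of its length writing the residue-collecting contour as the boundary of an annulus, pushing the outer circle to radius $R_4\gg1$, decomposing into $2^r$ terms according to which variables sit on the outer circle, and showing that every term with $m\ge1$ outer variables carries a factor $t^{m(N+r-m)}$ and hence vanishes \emph{coefficientwise in the $p_k$'s} as $N\to\infty$, leaving only the inner-circle term, which is the formal right-hand side.

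The same mismatch appears on the left: your finite-$N$ eigenvalue $e_r(q^{-\lambda_1},\dots,q^{-\lambda_N}t^{N-1})$ differs from $\O_r(\lambda)$ by terms of order $t^N$ (e.g.\ for $r=1$ the tail $\tfrac{t^{\ell(\lambda)}-t^N}{1-t}$ versus $\tfrac{t^{\ell(\lambda)}}{1-t}$), and these are exactly compensated by the outer-circle contributions on the right. So ``letting $N\to\infty$'' is not a continuity statement about two sequences that are already equal termwise to the two sides of \eqref{eq_main_lemma}; it requires the quantitative suppression argument above, which your proposal does not supply. You have correctly located the operator-theoretic input (including the $(q,t)\leftrightarrow(q^{-1},t^{-1})$ symmetry and the constant $t^{\binom r2}$), but you have misidentified where the difficulty lies: it is not in the residue combinatorics at fixed $N$, but in reconciling the residue-collecting contour with the formal unit-circle integral in the limit.
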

Let us explain how Proposition \ref{proposition_restated_computation_from_BigMac} should be
understood. Clearly, the left side of \eqref{eq_main_lemma} is an element of
$\overline{\Lambda_X}\otimes\overline{\Lambda_Y}$. Turning to the right side, by definition, for a set of variables $X$ and a single variable $u$, we
have
$$
\HH(u;X)=\exp\left(\sum_{k=1}^{\infty} (1-t^k)\frac{u^k p_k(X)}{k}\right).
$$
Therefore, the integrand on the right--hand side pf \eqref{eq_main_lemma} can be (uniquely) written as a sum
\begin{equation}
\label{eq_x13}
 \sum_{k=0}^{\infty} f_k(w_1,\dots,w_r) g_k,
\end{equation}
where $f_k$ is a certain function of $w_1,\dots,w_r$ and $g_k$ is an element of
$\Lambda_X\otimes\Lambda_Y$ of degree $k$. When we integrate \eqref{eq_x13} termwise (with $w_j$
integrated over the unit circle $|w_j|=1$), we are left with an element of
$\overline{\Lambda_X}\otimes\overline{\Lambda_Y}$. Now Proposition
\ref{proposition_restated_computation_from_BigMac} claims that this element is the same as the one
in the left side of \eqref{eq_main_lemma}.

The integrals over $w_j$ can be understood analytically (as complex integrals over contours) or,
equivalently, they have a purely algebraic meaning. Indeed, expand $\det(\frac{1}{
 w_k-tw_\ell})$ in the integrand in a power series using (recall that $0<t<1$)
\begin{equation}
\label{eq_x18}
 \frac{1}{w_k-t w_\ell} = \frac{1}{w_k} \cdot \frac{1}{1-t w_\ell/w_k} =\frac{1}{w_k} \sum_{i=0}^{\infty} \frac{
 t^i (w_\ell)^i}{(w_k)^{i}}.
\end{equation}
 Note that multiplication of series \eqref{eq_x18} for various indices $k$ and $\ell$ might involve
 summing geometric progressions with ratio $t$. After this procedure
 the functions $f_k$ in \eqref{eq_x13} become power series (in $w_i$ and $w_i^{-1}$). The contour
integral of such power series over the unit circle is $(2\pi\i)^r$ times the coefficient of
$(w_1\cdots w_r)^{-1}$.

\smallskip

{\bf Remark 1.} Both left and right sides of \eqref{eq_main_lemma} are symmetric under
interchanging $X$ and $Y$ (a change of integration variables is needed to see the symmetry in the
right side).

{\bf Remark 2.} The formula is also valid for $e_0=1$ if we understand the empty integral as $1$.

{\bf Remark 3.} If the integral is understood analytically, then the contours of integration can be
chosen along the circles $|w_j|=R>0$, $j=1,\dots,r$. The actual value of $R$ does not matter, as we
can deform all the contours together without changing the value of the integral.

{\bf Remark 4.} An integral representation similar to \eqref{eq_main_lemma} can be found in
\cite[Section 9]{Shi} and \cite[Proposition 3.6]{FHHSY} under the name of Heisenberg
Representation of the Macdonald Difference Operators.

\smallskip

The proof of Proposition \ref{proposition_restated_computation_from_BigMac} relies upon the
following lemma.

\begin{lemma}
\label{lemma_from_BigMac}
 Take two sets of $N$ complex numbers $X=\{x_i\}_{i=1}^N$ and $Y=\{y_i\}_{i=1}^N$ such that $|x_i y_j|<1$, $1\le
i,j\le N$. Assume that there exist $r$ closed complex contours $\gamma_1$,\dots $\gamma_r$, such
that the integral
\begin{equation}
\label{eq_x14} \frac{1}{(2\pi \i)^r r!} \oint_{\gamma_1}\dots\oint_{\gamma_r} \det \left[ \frac{1}{
 w_k-tw_\ell}\right]
 \prod_{j=1}^r \HH\big(w_j;X\big)\, \HH\big((qw_j)^{-1}; Y\big) dw_j
\end{equation}
is equal to the sum of the residues of the integrand at $w_j=(x_i)^{-1}$ for $j=1,\dots,r$,
$i=1,\dots,N$. Then the integral \eqref{eq_x14} also equals
$$
\sum\limits_{\lambda\in\Y: \ell(\lambda)\le N}
 e_r(q^{-\lambda_1}t^{0},\dots,q^{-\lambda_N} t^{N-1})\dfrac{P_\lambda(x_1,\dots,x_N) Q_\lambda(y_1,\dots,y_N)}{\Pi(x_1,\dots,x_N;y_1,\dots,y_N)}.
$$
\end{lemma}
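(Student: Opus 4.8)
The plan is to evaluate the multi-contour integral in \eqref{eq_x14} by iterated residue expansion, one variable at a time, and to match the resulting sum against the Macdonald-measure expectation of $e_r(q^{-\lambda_1},\dots,q^{-\lambda_N}t^{N-1})$ using the known diagonalization of a Macdonald difference operator. First I would recall that, since $|x_iy_j|<1$, each factor $\HH(w_j;X)=\prod_i \frac{1-tx_iw_j}{1-x_iw_j}$ is meromorphic in $w_j$ with simple poles exactly at $w_j=x_i^{-1}$, while $\HH((qw_j)^{-1};Y)=\prod_i \frac{1-ty_i/(qw_j)}{1-y_i/(qw_j)}$ is holomorphic and nonvanishing near those points (its poles sit at $w_j=y_i/q$, which the contour hypothesis keeps on the other side). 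The Cauchy kernel $\det[1/(w_k-tw_\ell)]$ contributes the standard Hall--Littlewood combinatorial weight when several $w_j$'s collapse onto the points $x_i^{-1}$.

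The key computational input is the residue expansion of $\prod_j \HH(w_j;X)$ over the points $\{x_i^{-1}\}$. Writing the sum of residues indexed by which subset $S\subset\{1,\dots,N\}$ of poles is hit (with $|S|=r$) and using the exact residue of $\HH(w;X)$ at $w=x_i^{-1}$ — namely $-\frac{1}{x_i}\prod_{k\ne i}\frac{1-tx_k/x_i}{1-x_k/x_i}\cdot(1-t)$-type factors — one obtains, after the symmetrization built into the determinant and the $1/r!$, precisely the branching/Pieri structure that reconstructs $P_\lambda(x_1,\dots,x_N)/\Pi(X;Y)$ together with the eigenvalue $e_r(q^{-\lambda_1}t^0,\dots,q^{-\lambda_N}t^{N-1})$. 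Concretely, I expect the cleanest route is to invoke the Heisenberg/vertex-operator representation of the Macdonald difference operator $D_N^r$ (cf.\ Remark 4 above, \cite{Shi}, \cite[Prop.\ 3.6]{FHHSY}): the integral \eqref{eq_x14} equals $\big(D_N^r\big)$ applied, in the $X$ variables, to $\Pi(X;Y)^{-1}\cdot\Pi(X;Y)=$ ... — more precisely, the integral is the generating function $\sum_\lambda (\text{eigenvalue})\,P_\lambda(X)Q_\lambda(Y)/\Pi(X;Y)$ because $D_N^r$ acts on the Cauchy kernel $\Pi(X;Y)=\sum_\lambda P_\lambda(X)Q_\lambda(Y)$ diagonally in $P_\lambda(X)$ with eigenvalue $e_r(q^{-\lambda_1}t^{N-1},\dots)$ (up to the standard reindexing $t^{i-1}\leftrightarrow t^{N-i}$, which only reverses the order of the arguments of $e_r$ and hence leaves $e_r$ unchanged).

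So the steps, in order, are: (1) justify that under the stated contour hypothesis the integral \eqref{eq_x14} equals the finite sum of residues at $w_j=x_i^{-1}$; (2) compute the residue of a single $\HH(w;X)$ at each $w=x_i^{-1}$ and assemble the multi-residue, keeping careful track of how the Cauchy determinant and the $1/(r!(2\pi\i)^r)$ normalization symmetrize the choice of which poles are taken and with what ratio-of-variables weights; (3) identify the resulting weighted sum over $r$-subsets of $\{x_i^{-1}\}$ with the action of the Macdonald operator $D_N^r$ on $\Pi(X;Y)$, using its explicit contour-integral (Heisenberg) form; (4) invoke $D_N^r P_\lambda = e_r(q^{\lambda_1}t^{N-1},\dots,q^{\lambda_N}t^0)\,P_\lambda$ from \cite[Ch.\ VI]{M}, pass to the reciprocal-variable convention to get $e_r(q^{-\lambda_1}t^0,\dots,q^{-\lambda_N}t^{N-1})$, and divide by $\Pi(X;Y)$ to conclude. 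The main obstacle I anticipate is step (2)--(3): matching the precise rational prefactors coming from the residues of the $\HH$-factors and the Cauchy kernel against the known explicit form of the Macdonald difference operator, i.e.\ verifying that the bookkeeping of the $\prod_{k\ne i}(1-tx_k/x_i)/(1-x_k/x_i)$ factors and the $q$-shift hidden in $\HH((qw_j)^{-1};Y)$ reproduces exactly the coefficients $A_S(x) = \prod_{i\in S, j\notin S}\frac{tx_i-x_j}{x_i-x_j}$ (and their $q$-deformations) appearing in $D_N^r$; everything else is a routine contour manipulation or a direct appeal to the eigenrelation.
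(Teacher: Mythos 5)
Your overall strategy---evaluate \eqref{eq_x14} as a sum of residues at $w_j=(x_i)^{-1}$, recognize the resulting sum as a Macdonald difference operator applied to the Cauchy kernel $\Pi(X;Y)=\sum_\lambda P_\lambda(X)Q_\lambda(Y)$, and finish with the eigenrelation---is exactly the route the paper takes (its proof is a citation to \cite[Remark 2.2.11]{BigMac} accompanied by precisely this one--sentence description). However, your step (4) contains a concrete error, and it sits at the heart of the lemma. The operator you invoke, $\Mac^r_N$, has eigenvalue $e_r(q^{\lambda_1}t^{N-1},\dots,q^{\lambda_N}t^0)$, i.e.\ $e_r$ of the multiset $\{q^{+\lambda_i}t^{N-i}\}$, whereas the lemma's observable is $e_r$ of the multiset $\{q^{-\lambda_i}t^{i-1}\}$. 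These are not related by reindexing: reversing the order of the arguments of $e_r$ cannot flip the sign of the exponent of $q$ (already for $r=N=1$ one expression is $q^{\lambda_1}$ and the other is $q^{-\lambda_1}$). So the parenthetical ``up to the standard reindexing \dots which leaves $e_r$ unchanged'' is false, and ``pass to the reciprocal-variable convention'' is not an operation that turns $q^{\lambda_i}$ into $q^{-\lambda_i}$.

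What actually happens is that at the residue $w_j=(x_i)^{-1}$ the factor $\HH\big((qw_j)^{-1};Y\big)$ becomes $\HH\big(q^{-1}x_i;Y\big)$, so the shift hidden in the integrand acts on the Cauchy kernel by $x_i\mapsto q^{-1}x_i$, not $x_i\mapsto qx_i$. The operator assembled from the residues is therefore not $\Mac^r_N$ but (an overall power of $t$ times) the \emph{inverted} operator $\widehat\Mac^r_N$ obtained from $\Mac^r_N$ by $(q,t)\mapsto(q^{-1},t^{-1})$; by the symmetry $P_\lambda(\,\cdot\,;q,t)=P_\lambda(\,\cdot\,;q^{-1},t^{-1})$ its eigenvalue is $e_r(q^{-\lambda_1}t^{1-N},\dots,q^{-\lambda_N})$, and since $e_r\big(\{q^{-\lambda_i}t^{i-1}\}\big)=t^{r(N-1)}e_r\big(\{q^{-\lambda_i}t^{i-N}\}\big)$ by homogeneity, the integral must be matched against $t^{r(N-1)}\widehat\Mac^r_N$ (compare the proof of Theorem \ref{theorem_observable_with_vars_inversed}, where exactly this operator and its eigenvalue appear, and note that the corresponding $A_I$ coefficients carry $t^{-1}$ in place of $t$). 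As written, your steps (2)--(4) would match the residue sum against the wrong operator and yield the eigenvalue $e_r(q^{\lambda_1}t^{N-1},\dots,q^{\lambda_N}t^0)$ instead of the stated one; with this correction the rest of your bookkeeping plan is sound and coincides with the argument of \cite[Remark 2.2.11]{BigMac} that the paper cites.
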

\begin{proof} This fact can be found in \cite[Remark 2.2.11]{BigMac}.
 The proof is based on the application of the $r$th Macdonald difference operator in variables $X$ (see \cite[Chapter VI]{M})
 to the identity
 $$
   \sum\limits_{\lambda\in\Y: \ell(\lambda)\le N}
P_\lambda(X) Q_\lambda(Y) = \Pi(X;Y).
 $$
 See also Section \ref{Section_ascending_obs} for more details.
\end{proof}

\begin{proof}[Proof of Proposition \ref{proposition_restated_computation_from_BigMac}]

Fix three reals $0<R_1<R_2<R_3$ such that $t R_3<R_1$. Take $N$ complex numbers
$X=\{x_i\}_{i=1}^N$ and $N$ complex numbers $Y=\{y_i\}_{i=1}^N$ such that that
$R_2<|x_j|^{-1}<R_3$ and $|y_j|\ll R_1$ for all $i$. In what follows we assume that $x_i$'s are
distinct, but all the formulas are readily extended to the case of equal $x_i$'s by  continuity.

Consider the integral
\begin{equation}
\label{eq_x15} \frac{1}{(2\pi \i)^r r!} \oint\dots\oint \det \left[ \frac{1}{
 w_k-tw_\ell}\right]
 \prod_{j=1}^r \HH\big(w_j;X\big)\, \HH\big((qw_j)^{-1}; Y\big) dw_j
\end{equation}
 with each $w_j$ being integrated over the union of circles $|w_j|=R_1$ and $|w_j|=R_3$ with the
integral over $R_1$ being positively orientated and over $R_3$ begin negatively oriented. The restrictions
on the variables imply that the integral is equal to the sum of the residues at $w_i=(x_j)^{-1}$ for $i=1,\ldots, r$, $j=1,\ldots, N$.
Thus we can apply Lemma \ref{lemma_from_BigMac} to see that the above integral equals
\begin{equation}
\label{eq_x16}
 \sum_{\lambda\in\Y^N} e_r(q^{-\lambda_1},q^{-\lambda_2}t, \dots, q^{-\lambda_N} t^{N-1}) \frac{P_\lambda(X)
 Q_\lambda(Y)}{\Pi(X;Y)}.
\end{equation}
Our aim is to convert the analytic identity \eqref{eq_x15}\,$=$\,\eqref{eq_x16} into the formal
identity in completed graded algebras which constitutes Proposition
\ref{proposition_restated_computation_from_BigMac}.

Note that \eqref{eq_x16} has a unique expansion as a (symmetric) power series in $x_j$'s and $y_j$'s. Any such symmetric power series can be written as a power series in
$p_k(X)$, $p_k(Y)$. As $N\to\infty$, each coefficient of the expansion for \eqref{eq_x16} converges to those of the left--hand side of \eqref{eq_main_lemma}.
Therefore, it remains to show similarly that the coefficients of the expansion in power series in $p_k(X)$, $p_k(Y)$ of \eqref{eq_x15} converge to the corresponding ones on the right side of \eqref{eq_main_lemma}. The rest of the proof is devoted to showing this.

\smallskip

The first step is to replace the portion of the contour of integration in which $w_j$ is integrated
along the circle of radius $R_3$ by a circle of radius $R_4 \gg 1$. We claim that the integral does not
change value under this transformation. To see this fact, recall that before the deformation, the integral is equal to the sum
of the residues of the integrand at points $w_i=(x_j)^{-1}$, $i=1,\dots,r$, $j=1,\dots,N$. Let us
also compute the integral (via residues) after the deformation of the contours and show it matches. First, we integrate
over $w_1$, getting the residues from the $N+2r-2$ choices of poles of the integrand at $w_1=(x_j)^{-1}$, $j=1,\dots,N$, and also at $w_1=t w_i$,
$i=2,\dots,r$ and $w_1=t^{-1} w_i$, $i=2,\dots,r$. For each choice of pole, we further integrate over $w_2$, picking residues in a similar manner, and so on upto $w_r$. From this we see that the integral is expanded into a sum of residues of the integrand in \eqref{eq_x15} over points of the form
\begin{equation}
\label{eq_x31}
 w_1=(x_{j_1})^{-1} t^{p_1},\quad w_2=(x_{j_2})^{-1} t^{p_2}, \ldots, w_{r} = (x_{j_r})^{-1} t^{p_r}
\end{equation}
where the summation is restricted to a certain subset (which we will determine in a moment) of $j_1,j_2,\ldots, j_r\in \{1,\ldots, N\}$ and $p_1,p_2,\ldots, p_r\in \Z$.

In order to determine which subset of points of the form of \eqref{eq_x31} should be summed over, note the following properties: If at least one of the pairs coincide, i.e., $(j_m,p_m)=(j_n,p_n)$ for $m\neq n$, then the residue is zero, since the integrand has no singularity at such a point. This is because
the Cauchy determinant (see e.g.\ \cite{Kr})
\begin{equation}
\label{eq_Cauchy_det} \det \left[ \frac{1}{
 w_k-tw_\ell}\right]_{k,\ell=1}^r=\frac{t^{r(r-1)/2}}{(1-t)^r w_1\dots w_r } \prod_{k\ne
 \ell} \frac{w_k-w_\ell}{w_k-tw_\ell}
\end{equation}
 vanishes when some of the variables coincide. Further, all $p_i$ should be non--positive. Indeed,
 no point of the kind $(x_j)^{-1} t^{-k}$, $k>0$ is inside our contours. We may further observe that the summation of residue need only be taken over points in \eqref{eq_x31}
 which are a union of strings of the form
 $$
  w_{i_1}=(x_j)^{-1},\quad w_{i_2}=(x_j)^{-1} t^{-1}, \quad \dots,\quad w_{i_m}= (x_j)^{-1} t^{1-m},
 $$
 (i.e., each string has the above form, but with possibly different length $m$, possibly different $j$ and disjoint variables $i_1,i_2,\ldots,i_m$).
 Note that if the length of any given string (i.e., $m$ in the above formula) is at least $2$, then the
 residue at such point vanishes. Indeed, the pole arising from the
determinant in the integrand cancels out with corresponding zero of $\HH(w;x_j)$. On the other hand,
if all the strings are of length $1$, then we get the same sum as was before the deformation of the
contours --- thus proving our claim.

\smallskip

The integral in \eqref{eq_x15} with $R_3$ replaced now by $R_4$ can be written as a sum of $2^r$ contour--integrals over circular contours with some
variables integrated over the circle of radius $R_1$ and others over the circle of radius $R_4 \gg 1$. Our
 aim is to analyze each term and ultimately show that as $N\to\infty$ only the term with all integrations
 over the $R_1$ circle survives. Since the integrand is symmetric in $w_j$, it is enough to consider the
 case when
$|w_1|=|w_2|=\dots=|w_m|=R_4$ and $|w_{m+1}|=\dots=|w_r|=R_1$, i.e., the integral
\begin{multline}
\label{eq_x24}
 \frac{1}{(2\pi \i)^r r!} \oint\limits_{|w_{m+1}|=R_1}\dots\oint\limits_{|w_{r}|=R_1} \prod_{j={m+1}}^r \HH\big(w_j;X\big)\, \HH\big((qw_j)^{-1}; Y\big)
\\ \times
   \oint\limits_{|w_1|=R_4}\dots\oint\limits_{|w_m|=R_4}  \det \left[ \frac{1}{
 w_k-tw_\ell}\right]_{k,\ell=1}^r
 \prod_{j=1}^m \HH\big(w_j;X\big)\, \HH\big((qw_j)^{-1}; Y\big) \prod_{j=1}^{r} dw_j.
\end{multline}

 Using the Cauchy
determinant formula \eqref{eq_Cauchy_det} we write:
\begin{eqnarray*}
\det \left[ \frac{1}{w_k-tw_\ell}\right]_{k,\ell=1}^r &=&\frac{t^{r(r-1)/2}}{(1-t)^r w_1\dots w_r } \prod_{k\ne
 \ell} \frac{w_k-w_\ell}{w_k-tw_\ell}\\
 &=& \frac{1}{(1-t)^r w_1\dots w_r } \prod_{k<\ell
 } \frac{1-w_\ell/w_k}{1-t^{-1}w_\ell/w_k} \prod_{k<\ell} \frac{1-w_\ell/w_k}{1-tw_\ell/w_k}.
\end{eqnarray*}
Note that $|w_\ell/w_k|$ equals either $1$ or $R_1/R_4\ll 1$ on our contours for $k<\ell$.
Therefore,
$$
\det \left[ \frac{1}{
 w_k-tw_\ell}\right]_{k,\ell=1}^r= \det \left[ \frac{1}{
 w_k-tw_\ell}\right]_{k,\ell=m+1}^r \det \left[ \frac{1}{
 w_k-tw_\ell}\right]_{k,\ell=1}^m \Big(1+O\big((R_4)^{-1}\big)\Big) t^{m(r-m)},
$$
where the remainder $O(\cdot)$ is uniform over integration variables $w_j$ on our contours.
 For $j=1,\dots,m$ note that
$$
 \HH(w_j;X)=\prod_{k=1}^N\frac{1-t w_j x_k}{1-w_j x_k}=t^N \prod_{k=1}^N \frac{1-t^{-1} (w_j)^{-1}
 (x_k)^{-1}}{1-(w_j)^{-1} (x_k)^{-1}}=t^N\Big(1+O\big((R_4)^{-1}\big)\Big),
$$
and also
$$
H\big((qw_j)^{-1}; Y\big)=1+O\big((R_4)^{-1}\big).
$$
 Thus, integrating over $w_j$, $j=1,\dots,m$, in \eqref{eq_x24} and then sending $R_4\to\infty$ we get
\begin{multline}
\label{eq_x26}
C(m) \cdot \frac{ t^{m(N+r-m)}}{(2\pi \i)^{r-m} r!} \oint\limits_{|w_{m+1}|=R_1}\dots\oint\limits_{|w_{r}|=R_1} \det \left[ \frac{1}{
 w_k-tw_\ell}\right]_{k,\ell=m+1}^r \prod_{j={m+1}}^r   \HH\big(w_j;X\big)\, \HH\big((qw_j)^{-1}; Y\big) dw_j,
\end{multline}
where $C(m)$ is the constant computed via
$$
 C(m)=\frac{1}{(2\pi \i)^m} \oint_{|w_1|=R}\dots\oint_{|w_m|=R} \det \left[ \frac{1}{
 w_k-tw_\ell}\right]_{k,\ell=1}^m d w_1\cdots dw_m
$$
(note that the exact value of $R>0$ is irrelevant in the last integral).

Further, for $j=m+1,\dots,r$ we expand the functions $\HH(w_j;X)$  into series using
$$
 \HH(w_j;X)=\exp\left(\sum_{k=1}^\infty (1-t^k) \frac{(w_j)^k p_k(X)}{k}\right)
$$
and the power series expansion of the exponential; similarly expand $H\big((qw_j)^{-1}; Y\big)$. We get
\begin{equation}
\label{eq_x27}
 \prod_{j={m+1}}^r   \HH\big(w_j;X\big)\, \HH\big((qw_j)^{-1}; Y\big)=\sum_{n=0}^{\infty} f_n(w_{m+1},\dots,w_{r})
 g_n,
\end{equation}
where $f_n$, $n\ge 0$, is an analytic function on the torus $w_j=R_1$, $j=m+1,\dots,r$, and $g_n$,
$n\ge 0$, is a homogeneous symmetric polynomial in $x_1,\dots,x_N$ and $y_1,\dots,y_N$ of degree
$n$, more precisely, $g_n$ is a polynomial in $p_k(X)$, $p_k(Y)$, whose coefficients do not depend
on $N$ or any choices we made.
 Note that
the convergence of expansions of $\HH(w_j;X)$, $\HH((qw_j)^{-1};Y)$ is uniform with respect to varying the $\{w_j\}$ on their contours,
the $\{x_j\}$ in the annulus $R_2<|x_j|^{-1}<R_3$ and the $\{y_j\}$ in some neighborhood of zero. Therefore, the order
of integration in \eqref{eq_x26} and summation in \eqref{eq_x27} can be interchanged. Hence,
evaluating the integrals over $w_{m+1},\dots,w_r$ transforms \eqref{eq_x26} into the sum
$$
 t^{m(N+r-m)} \sum_{n=0}^{\infty} \hat f_n
 g_n,
$$
where $g_n$ are as above, while $\hat f_n$ are certain coefficients which do not depend on $N$ and
are given by
\begin{equation*}
 \hat f_n= \frac{C(m)}{(2\pi \i)^{r-m} r! } \oint\limits_{|w_{m+1}|=R_1}\dots\oint\limits_{|w_{r}|=R_1} \det \left[ \frac{1}{
 w_k-tw_\ell}\right]_{k,\ell=m+1}^r f_n(w_{m+1},\dots,w_r)\prod_{j={m+1}}^r  dw_j.
\end{equation*}
If now $m\ge 1$, then the coefficients $t^{m(N+r-m)}\hat f_n$ vanish as $N\to\infty$. On the other
hand, for $m=0$ we arrive at the right side of \eqref{eq_main_lemma}.
\end{proof}

\subsection{Multilevel observable} The combination of Proposition
\ref{proposition_restated_computation_from_BigMac} and Proposition
\ref{proposition_evaluation_of_scalar_product} gives a way to compute the expectations of very
general observables of formal Macdonald processes. For two sets of variables $U=(u_1,u_2,\dots)$
and $V=(v_1,v_2,\dots)$ set
\begin{equation}\label{Wexp}
 \W(U;V) =\prod_{i,j} \frac{(1-t u_i v_j) (1-q u_i v_j)}{(1-u_i v_j)(1-qt u_i v_j)} = \exp\left(\sum_{k=1}^{\infty} \frac{(1-t^k)(1-q^k)}{k} p_k\left(U\right) p_k\left(
V\right) \right)
\end{equation}

\begin{theorem}
\label{theorem_observable_multilevel}
 Take $N\ge 1$ and $r_1,\dots,r_N\ge 0$. For $1\le m \le N$, set $V^m=\{v^m_1,\dots,v^m_{r_m}\}$ and define
$$
 DV^m = \frac{1}{(r_m)!(2\pi \i)^{r_m}} \det \left[ \frac{1}{
 v^m_i-tv^m_j}\right]_{i,j=1}^{r_m} \prod_{i=1}^{r_m} dv_i^m.
$$
We have
\begin{multline}
\label{eq_x6}
 \sum_{\lambda^1,\dots,\lambda^N\in\Y}
 \O_{r_1}(\lambda^1) \cdots
 \O_{r_N}(\lambda^N) \MPF_{N,\mathbf{A},\mathbf{B}} (\lambda^1,\dots,\lambda^N)
 \\= \oint \cdots \oint \prod_{\alpha=1}^N
 (DV^\alpha) \prod_{1\le \alpha\le \beta\le N} \HH\big((qV^\alpha)^{-1}; B^\beta\big)
  \HH\big(A^{\alpha};V^\beta\big)
  \W\big((qV^\alpha)^{-1}; V^\beta\big),
\end{multline}
where $\O_r(\lambda)$ is as in Definition \ref{Definition_observable}. Note that for a set of variables
$V=\{v_1,\dots,v_r\}$,  $(qV)^{-1}$ means the set $\{(qv_1)^{-1},\dots,(q v_r)^{-1}\}$. The
contours of integration are  a collection of positively oriented circles $\gamma_1,\dots,\gamma_m$ of
radii $R_1,\dots,R_m$ around the origin such that $v_i^\alpha$ is integrated over $\gamma_\alpha$,
and the radii are such that $R_\beta<qR_\alpha$ for $1\le \alpha<\beta \le N$.
\end{theorem}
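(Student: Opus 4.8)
The plan is to reduce the multilevel identity to the single-level one (Proposition \ref{proposition_restated_computation_from_BigMac}) by peeling off the partitions $\lambda^1,\dots,\lambda^N$ one at a time, using the iterated-pairing form \eqref{eq_definition_of_Mac_pro} of the Macdonald process. The workhorse is the unnormalized version of \eqref{eq_main_lemma}: multiplying through by $\Pi(X;Y)$ gives, for any two sets of variables (or specializations) $X,Y$,
\begin{equation*}
\sum_{\lambda\in\Y}\O_r(\lambda)\,P_\lambda(X)\,Q_\lambda(Y)=\Pi(X;Y)\oint (DV)\,\HH(V;X)\,\HH\big((qV)^{-1};Y\big),\qquad V=\{v_1,\dots,v_r\},
\end{equation*}
with $DV$ as in the theorem. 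Since $\HH$ and $\Pi$ are multiplicative in each argument, this identity factorizes whenever $X$ or $Y$ is a union of sets, and it is preserved under substituting for the $p_k(X)$ any elements of a completed graded algebra; in particular it holds when $X$ is a formal specialization.

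Starting from the left-hand side $S$ of the asserted identity and inserting \eqref{eq_definition_of_Mac_pro}, I sum over $\lambda^1$, then $\lambda^2$, and so on. The inductive claim is that after disposing of $\lambda^1,\dots,\lambda^{m-1}$ the variable $\lambda^m$ occurs only in a factor $P_{\lambda^m}(A^m,\sigma_{m-1})$ produced at the previous step ($\sigma_0=\varnothing$) and in $Q_{\lambda^m}(Y^m,B^m)$ sitting inside the pairing $\langle P_{\lambda^{m+1}}(A^{m+1},Y^m),\,\cdot\,\rangle_{Y^m}$ (for $m=N$ this is replaced by a bare $Q_{\lambda^N}(B^N)$). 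Pulling the sum over $\lambda^m$ into the pairing and applying the displayed identity with $X=(A^m,\sigma_{m-1})$, $Y=(Y^m,B^m)$ introduces $V^m$ and writes the outcome as $\oint (DV^m)\,E_m\,\Pi(\sigma_m;Y^m)$: the $Y^m$-free factor $E_m$ collects $\Pi(A^\alpha;B^m)$, $\HH((qV^\alpha)^{-1};B^m)$, $\HH(A^\alpha;V^m)$ for $1\le\alpha\le m$ and $\W((qV^\alpha)^{-1};V^m)$ for $1\le\alpha<m$, while $\sigma_m(p_k)=\sum_{\alpha=1}^m p_k(A^\alpha)+(1-q^k)\sum_{\alpha=1}^m p_k\big((qV^\alpha)^{-1}\big)$. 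The appearance of the $\W$-factors and the precise shape of $\sigma_m$ come out of expanding $\HH(V^m;\sigma_{m-1})$ and $\Pi(\sigma_{m-1};\,\cdot\,)$ with the exponential-of-power-sums formulas \eqref{eq_H_function}, \eqref{Wexp} (recall $\HH$ is $\Pi$ at $q=0$). Finally $\langle P_{\lambda^{m+1}}(A^{m+1},Y^m),\Pi(\sigma_m;Y^m)\rangle_{Y^m}=P_{\lambda^{m+1}}(A^{m+1},\sigma_m)$ by \eqref{eq_skew} and the Cauchy identity \eqref{eq_Cauchy_for_Mac}, which returns precisely the configuration of the inductive claim with $m$ replaced by $m+1$.

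After the last step the accumulated $\prod_{m=1}^N\prod_{\alpha\le m}\Pi(A^\alpha;B^m)=\prod_{1\le\alpha\le\beta\le N}\Pi(A^\alpha;B^\beta)$ cancels the normalization, and the surviving $\HH$- and $\W$-factors reassemble, via the symmetry of $\HH$ and $\W$, into $\prod_{1\le\alpha\le\beta\le N}\HH((qV^\alpha)^{-1};B^\beta)\,\HH(A^\alpha;V^\beta)\,\W((qV^\alpha)^{-1};V^\beta)$, the same-level factor $\W((qV^\alpha)^{-1};V^\alpha)$ being read as $1$. As for the contours, the only binding analytic requirement is that the series for $\W(V^\beta;(qV^\alpha)^{-1})$ with $\alpha<\beta$ converge, which forces $|v^\beta_i|<q|v^\alpha_j|$, i.e.\ $R_\beta<qR_\alpha$ for $\alpha<\beta$; since the radius of each $V^m$-contour produced by Proposition \ref{proposition_restated_computation_from_BigMac} may be chosen freely, one takes $R_1>R_2/q>R_3/q^2>\cdots$.

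I expect the algebra above to be essentially forced, so the real difficulty is to justify that all the interchanges of the $r_1+\cdots+r_N$ contour integrals with the sums over the $\lambda^m$ and with the Macdonald pairings are legitimate in the completed graded algebras. Following the template of the proof of Proposition \ref{proposition_restated_computation_from_BigMac}, the clean route is never to integrate until the very end: expand each $\HH$, $\W$, $\Pi$ and each $P_{\lambda^m}(A^m,\sigma_{m-1})$ as a power series in the $p_k(A^\alpha)$, $p_k(B^\beta)$ whose coefficients are Laurent series in the $v^\alpha_i$, run the whole induction coefficient-by-coefficient, and integrate termwise at the end, each $\oint$ extracting the coefficient of $(v^\alpha_1\cdots v^\alpha_{r_\alpha})^{-1}$. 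What must be checked is that every such coefficient is a finite expression and that the termwise integration converges in the graded topology --- which is exactly where the radius inequalities $R_\beta<qR_\alpha$ and the uniform convergence of the $\HH$- and $\W$-expansions on the integration tori are used.
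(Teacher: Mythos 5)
Your proposal is correct and takes essentially the same route as the paper: both start from the iterated-pairing form of Proposition \ref{Prop_Mac_pro_alternative}, apply the single-level identity of Proposition \ref{proposition_restated_computation_from_BigMac} to each $\lambda^m$-sum, extract the $\W$-factors from the cross terms $\exp\big(\sum_k\tfrac{(1-t^k)(1-q^k)}{k}p_k(V^\beta)p_k((qV^\alpha)^{-1})\big)$, and obtain the contour condition $R_\beta<qR_\alpha$ from the convergence of exactly those expansions. The only difference is organizational --- you interleave the $\lambda^m$-sums with the $Y^m$-pairings by tracking an accumulated specialization $\sigma_m$ and using $\langle P_\lambda(\cdot,Y),\Pi(\sigma;Y)\rangle_Y=P_\lambda(\cdot,\sigma)$, whereas the paper converts all $N$ sums to integrals first and then collapses the pairings with Proposition \ref{proposition_evaluation_of_scalar_product}; your reading of the same-level factor $\W((qV^\alpha)^{-1};V^\alpha)$ as absent is the correct one and matches what the paper's proof actually produces.
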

Similarly to Proposition \ref{proposition_restated_computation_from_BigMac}, \eqref{eq_x6} should
be understood as an identity of elements of $
 \overline{\Lambda_{A^1}}\otimes \dots \otimes  \overline{\Lambda_{A^N}}\otimes
 \overline{\Lambda_{B^1}}\otimes \dots  \otimes \overline{\Lambda_{B^N}}
$. Such an element in the right side of \eqref{eq_x6} is obtained by expanding all
$\HH\big((qV^\alpha)^{-1}; B^\beta\big)$ and $\HH\big(A^{\alpha};V^\beta\big)$ into symmetric
series and then evaluating the integrals term--wise. This evaluation can be either done
analytically (i.e., computing complex contour--integrals) or algebraically by expanding the integrals
in series in variables $v_i^m$ and $(v_i^m)^{-1}$ using:
$$
\frac{1}{v^m_i-t v_j^m} = \frac{1}{v^m_i} \cdot \frac{1}{1-t v^m_i/v^m_j} =\frac{1}{v^m_i}
\sum_{k=0}^{\infty} \frac{
 t^k (v^m_j)^k}{(v^m_i)^{k}},
$$
and for $\alpha<\beta$
\begin{eqnarray*}
 \W\big((qv_i^\alpha)^{-1};v_j^\beta\big) &=&\frac{(1-t v_j^\beta/(qv_i^\beta)) (1-q v_j^\beta/(qv_i^\alpha))}{(1-v_j^\beta/(qv_i^\alpha))(1-qt v_j^\beta/(qv_i^\alpha))}
 \\
 &=& \left(1-t \frac{v_j^\beta}{qv_i^\alpha}\right) \left(1- \frac{v_j^\beta}{v_i^\alpha}\right) \left(\sum_{k=0}^{\infty}
 \left(\frac{v_j^\beta}{qv_i^\alpha}\right)^k\right)\cdot \left( \sum_{k=0}^{\infty}
 \left(t \frac{v_j^\beta}{v_i^\alpha}\right)^k\right),
\end{eqnarray*}
and then evaluating the coefficient of $\left(\prod_{m=1}^N \prod_{i=1}^{r_m} v_i^m\right)^{-1}$.

\begin{proof}[Proof of Theorem \ref{theorem_observable_multilevel}]
Using Proposition \ref{Prop_Mac_pro_alternative}, write the left--hand side of \eqref{eq_x6} as
\begin{multline*}
\frac{1}{ \prod_{1\le \alpha\le \beta \le N} \Pi(A^\alpha; B^\beta)}\, \cdot
 \l \sum_{\lambda^{N}\in\Y} \O_{r_N}(\lambda^N) Q_{\lambda^N}(B^N) P_{\lambda^N}(A^N,Y^{N-1}),\\
 \l \sum_{\lambda^{N-1}\in\Y} \O_{r_{N-1}}(\lambda^{N-1}) Q_{\lambda^{N-1}}(Y^{N-1},B^{N-1})
   P_{\lambda^{N-1}}(A^{N-1},Y^{N-2}),\\ \vdots\\
 \l \sum_{\lambda^2\in\Y} \O_{r_2}(\lambda^2) Q_{\lambda^2}(Y^2,B^2) P_{\lambda^2}(A^2,Y^1),
 \sum_{\lambda^1\in\Y} \O_{r_1}(\lambda^1) Q_{\lambda^1}(Y^1,B^1) P_{\lambda^1}(A^1) \r_{Y^1}
 \dots  \r_{Y^{N-2}} \r_{Y^{N-1}}  .
\end{multline*}
Applying Proposition \ref{proposition_restated_computation_from_BigMac} one time for each of the
summations over $\lambda^1, \dots, \lambda^N$, we find that the above expression equals
\begin{multline}
\label{eq_x19}
\frac{1}{ \prod_{1\le i\le j \le N} \Pi(A^i; B^j)}\, \cdot
 \l \oint DV^N \HH\big(V^N; A^N,Y^{N-1}\big) \HH\big((qV^N)^{-1};B^N\big) \Pi\big(B^N; A^N,Y^{N-1}\big),\\
 \l \oint DV^{N-1} \HH\big(V^{N-1}; A^{N-1},Y^{N-2}\big) \HH\big((qV^{N-1})^{-1};B^{N-1},Y^{N-1}\big) \Pi\big(B^{N-1},Y^{N-1}; A^{N-1},Y^{N-2}\big)
 ,\\ \vdots\\
 \l \oint DV^{2} \HH\big(V^{2}; A^{2},Y^{1}\big) \HH\big((qV^{2})^{-1};B^{2},Y^{2}\big) \Pi\big(B^{2},Y^{2}; A^{2},Y^{1}\big)
 ,\\ \oint DV^{1} \HH\big(V^{1}; A^{1}\big) \HH\big((qV^{1})^{-1};B^{1},Y^{1}\big) \Pi\big(B^{1},Y^{1}; A^{1}\big) \r_{Y^1} \dots  \r_{Y^{N-2}} \r_{Y^{N-1}}.
\end{multline}
Note that if we view the integrations as algebraic operations (as is explained after Proposition
\ref{proposition_restated_computation_from_BigMac}), then in \eqref{eq_x19} using the continuity of
the Macdonald pairing and of the constant term evaluation in the topology of completed graded
algebras, we can interchange the order of integration and evaluating scalar products. Then we can
use Proposition \ref{proposition_evaluation_of_scalar_product}. For the variables $Y^1$ we get
(omitting all the factors independent of $Y^1$ which do not change in the scalar product
evaluation)
\begin{multline*}
 \l \HH\big(V^{2}; Y^{1}\big) \Pi\big(B^{2},Y^{2}; Y^{1}\big),
  \HH\big((qV^{1})^{-1};Y^{1}\big) \Pi\big(Y^{1}; A^{1}\big)\r_{Y^1}
\\= \HH\big(V^2;A^1\big) \HH\big((qV^1)^{-1};B^2,Y^2\big) \Pi\big(B^2,Y^2; A^1\big)\, \exp\left(\sum_{k=1}^{\infty}
\frac{(1-t^k)(1-q^k)}{k} p_k\left(V^2\right) p_k\left( (qV^1)^{-1}\right) \right).
\end{multline*}
Note that by \eqref{Wexp}
$$
\exp\left(\sum_{k=1}^{\infty} \frac{(1-t^k)(1-q^k)}{k} p_k\left(V^2\right) p_k\left(
(qV^1)^{-1}\right) \right)=W\left(V^2;(qV^1)^{-1}\right),
$$
if we assume $|v^2_i/(qv^1_j)|<1$ when expanding $W(V^2;(qV^1)^{-1})$ in power series. This gives
the same restriction on the contours as the one in Theorem \ref{theorem_observable_multilevel}. In
the next step we evaluate the scalar product for the variables $Y^2$ and find (again omitting
factors independent of $Y^2$)
\begin{multline*}
 \l \HH\big(V^{3}; Y^{2}\big) \Pi\big(B^{3},Y^{3}; Y^{2}\big),
  \HH\big((qV^{2})^{-1}, (qV^1)^{-1} ;Y^{2}\big) \Pi\big(Y^2; A^1, A^{2}\big) \r_{Y^2}
\\= \HH\big(V^3; A^1,A^{2}\big)\, \HH\big( (qV^{2})^{-1}, (qV^1)^{-1}; B^{3},Y^{3}\big)\, \Pi\big( B^{3},Y^{3}; A^1, A^{2}\big)
\, \W\big(V^{3}; (qV^{2})^{-1}, (qV^1)^{-1}\big).
\end{multline*}
Further evaluating scalar products for variables $Y^3$, \dots, $Y^{N-1}$ we arrive at the claimed
formula.
\end{proof}

\subsection{Simple corollaries}
Let us give two corollaries of Theorem \ref{theorem_observable_multilevel}.

\begin{corollary}
\label{corollary_observable_multilevel_multiplied}
 Take any $M$ integers $1\le k_1\le k_2 \le \dots
\le k_M\le N$ and $M$ positive integers $r_1,\dots,r_M$. With the notations and contours as in
Theorem \ref{theorem_observable_multilevel} we have
\begin{multline}
\label{eq_x23}
 \sum_{\lambda^1,\dots,\lambda^N}  \O_{r_1}(\lambda^{k_1}) \cdots
 \O_{r_M}(\lambda^{k_M}) \MPF_{N,\mathbf{A},\mathbf{B}} (\lambda^1,\dots,\lambda^N)
 = \oint \cdots \oint \prod_{m=1}^M
 (DV^m) \\ \times \prod_{1\le \alpha,\beta\le M:\,k_\alpha\le \beta} \HH\big((qV^\alpha)^{-1}; B^\beta\big)
 \prod_{1\le \alpha,\beta \le M:\, \alpha\le k_\beta}  \HH\big(A^{\alpha};V^\beta\big)
   \prod_{1\le \alpha<\beta\le M} \W\big((qV^\alpha)^{-1}; V^\beta\big).
\end{multline}
\end{corollary}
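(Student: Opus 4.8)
The plan is to deduce Corollary~\ref{corollary_observable_multilevel_multiplied} from Theorem~\ref{theorem_observable_multilevel} by a ``level--duplication'' manoeuvre based on Proposition~\ref{prop_Mac_pro_empty}. First consider the case in which the observation levels $k_1\le\cdots\le k_M$ are pairwise distinct: here no trick is needed. One applies Theorem~\ref{theorem_observable_multilevel} to $\MPF_{N,\mathbf A,\mathbf B}$ with $r_\ell:=r_i$ whenever $\ell=k_i$ and $r_\ell:=0$ otherwise, uses $\O_0\equiv 1$ to match the left--hand sides, and notes that on the right every factor attached to an unobserved level $\ell$ is built from the empty set $V^\ell=\varnothing$, hence equals $1$ (the empty integral, the empty determinant, and the empty products $\HH(\varnothing;\cdot)=\HH(\cdot;\varnothing)=\W(\varnothing;\cdot)=1$). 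Relabelling the surviving sets $V^{k_1},\dots,V^{k_M}$ as $V^1,\dots,V^M$ and reading off which factors persist --- $\HH((qV^\alpha)^{-1};B^\beta)$ for $\beta\ge k_\alpha$, $\HH(A^\alpha;V^\beta)$ for $\alpha\le k_\beta$, and $\W((qV^\alpha)^{-1};V^\beta)$ for $\alpha<\beta$ --- gives exactly \eqref{eq_x23}, with the contour condition $R_\beta<qR_\alpha$ inherited verbatim.

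To treat repeated levels I would first manufacture a process in which they are no longer repeated. Put $p_\ell=\#\{i:k_i=\ell\}$ and $N'=\sum_{\ell=1}^N\max(p_\ell,1)$, and build $2N'$ auxiliary variable sets $\mathbf A'$, $\mathbf B'$ by listing the levels $1,\dots,N$ and replacing each level $\ell$ with $p_\ell\ge 2$ observations by $p_\ell$ consecutive copies; attach $A^\ell$ to the first copy of $\ell$ and $B^\ell$ to the $B$--slot following its last copy, and set every remaining $A'$--slot, as well as every $B'$--slot interpolating between two copies of one and the same level, equal to the trivial specialization (the one fixed by $\phi_0$). Applying Theorem~\ref{theorem_observable_multilevel} to $\MPF_{N',\mathbf A',\mathbf B'}$, with the $M$ observables $\O_{r_1},\dots,\O_{r_M}$ placed one per copy at the copies of $k_1,\dots,k_M$ and $r=0$ at all remaining auxiliary levels, yields a contour--integral identity over $M$ nonempty variable sets (the $r=0$ levels again drop out). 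I would then collapse the auxiliary process back to $\MPF_{N,\mathbf A,\mathbf B}$ by applying, once for each extra copy, the constant--term maps $\phi^i_0$ of Proposition~\ref{prop_Mac_pro_empty}: on the left this forces $\lambda$ to coincide across the copies of each level, so the observable product becomes $\prod_i\O_{r_i}(\lambda^{k_i})$ against $\MPF_{N,\mathbf A,\mathbf B}$ --- here one uses that each $\O_{r_i}$ is a scalar not involving the $A',B'$ variables, so $\phi^i_0$ commutes with the (convergent) sum over partitions by continuity. On the right, $\phi^i_0$ sends each trivial auxiliary $A'$ or $B'$ to its constant term, turning the corresponding $\HH$ factors into $1$ and leaving only those involving the genuine $A^\ell,B^\ell$; a factor $\W((qV^\alpha)^{-1};V^\beta)$ survives for every pair of observations with $\alpha<\beta$ since distinct copies carry genuinely distinct, nonempty variable sets, and the constraint $R_\beta<qR_\alpha$ is unaffected.

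The main obstacle is purely bookkeeping: one must verify that after the $\phi^i_0$--collapse the surviving factors are exactly $\prod_{k_\alpha\le\beta}\HH((qV^\alpha)^{-1};B^\beta)$, $\prod_{\alpha\le k_\beta}\HH(A^\alpha;V^\beta)$ and $\prod_{\alpha<\beta}\W((qV^\alpha)^{-1};V^\beta)$ --- that is, that an observation sitting on a copy of level $k$ ends up paired with $B^\beta$ precisely for $\beta\ge k$, with $A^\alpha$ precisely for $\alpha\le k$, and that no degenerate ``diagonal'' factor $\W((qV)^{-1};V)$ is ever produced. All of this follows from the ordering of the copies of a level inside the auxiliary process together with the order--preserving nature of the collapse, but it should be spelled out carefully; an alternative, more hands--on route is to first isolate a single--level multi--observable version of Proposition~\ref{proposition_restated_computation_from_BigMac} (itself obtained by the same duplication argument with $N=p$) and then rerun the proof of Theorem~\ref{theorem_observable_multilevel} verbatim.
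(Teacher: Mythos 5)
Your argument is correct and follows essentially the same route as the paper: both apply Theorem \ref{theorem_observable_multilevel} to an enlarged Macdonald process in which each observable occupies its own level and then collapse the auxiliary levels with the constant--term maps $\phi^i_0$ of Proposition \ref{prop_Mac_pro_empty}, forcing the duplicated partitions to coincide. The only (immaterial) difference is that the paper always inserts one auxiliary level per observation, working with $N+M$ levels and applying $\phi_0$ at the end, whereas you duplicate only the levels carrying repeated observations and treat the distinct--level case by direct substitution of $r_\ell=0$.
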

{\bf Remark. } The difference from Theorem \ref{theorem_observable_multilevel} is that now we
compute expectations of various products and powers of $\O_r(\lambda^m)$, thus \eqref{eq_x23} is
more general than \eqref{eq_x6}.
\begin{proof}[Proof of Corollary \ref{corollary_observable_multilevel_multiplied}]
The proof is a combination of Theorem \ref{theorem_observable_multilevel} with Proposition
\ref{prop_Mac_pro_empty}.

Take $2(N+M)$ auxiliary sets of variables $\mathbf{C}=(C_1,\dots,C_{N+M})$,
$\mathbf{D}=(D_1,\dots,D_{N+M})$.

Let $\lambda^1,\dots,\lambda^{N+M}$ be distributed according to $\MPF_{M,\mathbf{C},\mathbf{D}}$
and apply
 Theorem \ref{theorem_observable_multilevel} to it
 with the sequence of numbers
 $r'_i$, $i=1,\dots,N+M$ (they were called $r_i$ in Theorem \ref{theorem_observable_multilevel}, but we use $r'_i$ here to avoid the confusion
 with numbers $r_i$ of Corollary
  \ref{corollary_observable_multilevel_multiplied}) obtained as follows:
  we set the first $k_1$ $r'_i$'s to equal
$0$,
  the next one (i.e., $r'_{k_1+1}$)) is $r_1$,
  then we take $k_2-k_1$ zeroes, then $r_2$, \dots, so on until $r_M$ and finally $N-k_M$ zeroes. Applying to the result
  $\phi^{i-1}_0$ (as in Proposition \ref{prop_Mac_pro_empty})
  for all indices $1\le i\le N+M$ such that
  $r_i\ne 0$, and renaming the remaining sets of variables $C_j$, $D_j$ into $A_i$ and $B_i$,
 we get \eqref{eq_x23}.

 For example, if $N=1$, $M=2$, and $k_1=k_2=1$, $r_1=r_2=1$, then we start from $\mathbf{C}=(C_1,C_2,C_{3})$,
$\mathbf{D}=(D_1,D_2,D_{3})$ and the corresponding Macdonald process. Application of Theorem
\ref{theorem_observable_multilevel} with $r'=(0,1,1)$ gives the contour--integral formula for
\begin{equation}
\label{eq_x33}
 \sum_{\lambda^1,\lambda^2,\lambda^3\in Y} \O_{1}(\lambda^2) \O_1(\lambda^3) \MPF_{3;\mathbf C, \mathbf
 D}(\lambda^1,\lambda^2,\lambda^3).
\end{equation}
When we apply $\phi^{2}_0$ and $\phi^{1}_0$ to \eqref{eq_x33}, the summation becomes restricted to
$\lambda^1=\lambda^2=\lambda^3$ and after renaming the sets of variables we arrive at the desired
contour--integral formula for
$$
 \sum_{\lambda^1\in \Y} \left(\O_{1}(\lambda^1)\right)^2 \MPF_{1;\mathbf A, \mathbf
 B}(\lambda^1). \qedhere
$$
\end{proof}

\begin{corollary}
\label{Corollary_multiplied_by_numbers} In the notations of Theorem
\ref{theorem_observable_multilevel}, let $c_1,\dots, c_N$ be any numbers (or formal variables) and
set
$$
 d_i=\prod_{j=i}^N c_i,\quad d_{N+1}=1.
$$
We have
\begin{multline}
\label{eq_x8}
 \sum_{\lambda^1,\dots,\lambda^N}
  \Bigl((c_1)^{|\lambda^1|}\O_{r_1}(\lambda^1)\Bigr) \cdots
 \Bigl((c_N)^{|\lambda^N|}\O_{r_N}(\lambda^N)\Bigr)
 \MPF_{N,\mathbf{A},\mathbf{B}}(\lambda^1,\dots,\lambda^N)
 \\ = \prod_{1\le\alpha \le \beta\le N}
 \frac{ \Pi(d_\alpha A^\alpha; (d_{\beta+1})^{-1}B^\beta)} {\Pi(A^\alpha; B^\beta)} \oint \cdots \oint
\prod_{m=1}^N
 (DV^m)\\ \times \prod_{1\le\alpha \le \beta\le N} \HH\big((qV^\alpha)^{-1};(d_{\beta+1})^{-1} B^\beta\big)\,\HH\big(d_\alpha A^{\alpha};V^\beta\big)\, \W\big((qV^\alpha)^{-1}; V^\beta\big).
\end{multline}
\end{corollary}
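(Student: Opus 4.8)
The plan is to deduce \eqref{eq_x8} from Theorem \ref{theorem_observable_multilevel} by a change of variables which absorbs the factors $(c_i)^{|\lambda^i|}$ into a rescaling of the sets $\mathbf A$ and $\mathbf B$. Concretely, introduce the rescaled sets $\mathbf A'=(d_1A^1,\dots,d_NA^N)$ and $\mathbf B'=\big((d_2)^{-1}B^1,\dots,(d_{N+1})^{-1}B^N\big)$, and compare $\MPF_{N,\mathbf A',\mathbf B'}$ with $\MPF_{N,\mathbf A,\mathbf B}$. Here substituting $c\,x_i$ for $x_i$ is a continuous algebra homomorphism of a completed graded algebra that multiplies the degree-$k$ component by $c^k$, so all the manipulations below are legitimate on completed graded algebras and commute with the summation over $\Y$; when the $c_i$ are formal variables one works over $\mathbb C(c_1,\dots,c_N)$, and when they are numbers one takes them nonzero so that the $d_{\beta+1}^{-1}$ are defined.

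First I would record the effect of the rescaling on the building blocks of the numerator in Definition \ref{Definition_mac_pro}, using that $P_{\lambda/\nu}$ and $Q_{\lambda/\nu}$ are homogeneous of degree $|\lambda|-|\nu|$. One has $P_{\lambda^1}(d_1A^1)=d_1^{|\lambda^1|}P_{\lambda^1}(A^1)$, and, for the key middle factors, since $B^{\alpha-1}$ is rescaled by $(d_\alpha)^{-1}$,
$$
\Psi_{\lambda^\alpha,\lambda^{\alpha-1}}\big(d_\alpha A^\alpha;(d_\alpha)^{-1}B^{\alpha-1}\big)=\sum_{\nu\in\Y}d_\alpha^{|\lambda^\alpha|-|\nu|}d_\alpha^{-(|\lambda^{\alpha-1}|-|\nu|)}P_{\lambda^\alpha/\nu}(A^\alpha)Q_{\lambda^{\alpha-1}/\nu}(B^{\alpha-1})=d_\alpha^{|\lambda^\alpha|-|\lambda^{\alpha-1}|}\Psi_{\lambda^\alpha,\lambda^{\alpha-1}}(A^\alpha;B^{\alpha-1}),
$$
the point being that the $|\nu|$-dependence cancels; finally $Q_{\lambda^N}\big((d_{N+1})^{-1}B^N\big)=Q_{\lambda^N}(B^N)$ because $d_{N+1}=1$. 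Multiplying these, the numerator of $\MPF_{N,\mathbf A',\mathbf B'}$ is $d_1^{|\lambda^1|}\prod_{\alpha=2}^N d_\alpha^{|\lambda^\alpha|-|\lambda^{\alpha-1}|}$ times that of $\MPF_{N,\mathbf A,\mathbf B}$, and the telescoping identities $d_\alpha/d_{\alpha+1}=c_\alpha$ for $1\le\alpha\le N-1$ together with $d_N=c_N$ collapse this prefactor to exactly $\prod_{\alpha=1}^N(c_\alpha)^{|\lambda^\alpha|}$. Dividing by the variable-independent normalizations $\prod_{\alpha\le\beta}\Pi\big(d_\alpha A^\alpha;(d_{\beta+1})^{-1}B^\beta\big)$ and $\prod_{\alpha\le\beta}\Pi(A^\alpha;B^\beta)$ then gives the pointwise identity
$$
\prod_{\alpha=1}^N(c_\alpha)^{|\lambda^\alpha|}\,\MPF_{N,\mathbf A,\mathbf B}(\lambda^1,\dots,\lambda^N)=\prod_{1\le\alpha\le\beta\le N}\frac{\Pi\big(d_\alpha A^\alpha;(d_{\beta+1})^{-1}B^\beta\big)}{\Pi(A^\alpha;B^\beta)}\;\MPF_{N,\mathbf A',\mathbf B'}(\lambda^1,\dots,\lambda^N).
$$

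To finish, multiply both sides by $\O_{r_1}(\lambda^1)\cdots\O_{r_N}(\lambda^N)$ — these depend only on the partitions and are untouched by the rescaling — sum over $\lambda^1,\dots,\lambda^N\in\Y$, and apply Theorem \ref{theorem_observable_multilevel} to the right-hand side with $\mathbf A'$, $\mathbf B'$ in place of $\mathbf A$, $\mathbf B$. This replaces $\HH\big((qV^\alpha)^{-1};B^\beta\big)$ by $\HH\big((qV^\alpha)^{-1};(d_{\beta+1})^{-1}B^\beta\big)$ and $\HH\big(A^\alpha;V^\beta\big)$ by $\HH\big(d_\alpha A^\alpha;V^\beta\big)$, while leaving $\W\big((qV^\alpha)^{-1};V^\beta\big)$, the factors $DV^m$, and the contours unchanged, which is precisely \eqref{eq_x8}. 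The only genuine computation is the exponent bookkeeping in the telescoping step, so that is where I would be most careful; everything else is a direct invocation of Theorem \ref{theorem_observable_multilevel}.
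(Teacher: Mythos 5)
Your proof is correct and follows essentially the same route as the paper: both arguments use the homogeneity of the (skew) Macdonald symmetric functions to absorb the telescoping product $d_1^{|\lambda^1|}d_2^{|\lambda^2|-|\lambda^1|}\cdots d_N^{|\lambda^N|-|\lambda^{N-1}|}=\prod_\alpha (c_\alpha)^{|\lambda^\alpha|}$ into the rescaled sets $d_\alpha A^\alpha$ and $(d_{\beta+1})^{-1}B^\beta$, and then invoke Theorem \ref{theorem_observable_multilevel} for the rescaled process. Your write-up merely spells out the cancellation of the $|\nu|$-exponents inside $\Psi_{\lambda,\mu}$ that the paper leaves implicit.
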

\begin{proof}
The homogeneity of (skew) Macdonald symmetric functions implies that
\begin{multline*}
\frac{d_1^{|\lambda^1|} d_2^{|\lambda^2|-|\lambda^1|} \cdots d_N^{|\lambda^N|-|\lambda^{N-1}|}}{
\prod_{\alpha\le \beta} \Pi(A^\alpha; B^\beta)} P_{\lambda^1}(A^1)
\Psi_{\lambda^2,\lambda^{1}}(A^2;B^{1}) \cdots \Psi_{\lambda^N,\lambda^{N-1}}(A^N;B^{N-1})
Q_{\lambda^N}(B^N)
\\
= \frac{1}{\prod_{\alpha\le \beta} \Pi(A^\alpha; B^\beta)} P_{\lambda^1}(d_1A^1)
\Psi_{\lambda^2,\lambda^{1}}(d_2 A^2;(d_2)^{-1}B^{1}) \cdots \Psi_{\lambda^N,\lambda^{N-1}}(d_N
A^N; (d_N)^{-1}B^{N-1}) Q_{\lambda^N}(B^N).
\end{multline*}
Thus, we can use Proposition \ref{Prop_Mac_pro_alternative} and \eqref{eq_x6} to compute the sum
in the left side of \eqref{eq_x8} and we reach the desired result.
\end{proof}

Of course, one can also combine Corollaries \eqref{corollary_observable_multilevel_multiplied} and
\eqref{Corollary_multiplied_by_numbers}. We leave the resulting statement to an interested reader.

\section{Ascending Macdonald processes}\label{ascsec}
 Let us focus on a special case of Macdonald processes, that is very useful in applications, cf.\ \cite{BigMac}.

 For any complex  number $a$, let $\phi_a$ be an algebra homomorphism $\phi_a:\Lambda\to
 \mathbb C$ (i.e., a specialization), such that $\phi_a(p_k)=a^k$. In other words, $\phi_a$ is the
 substitution $x_1=a$, $x_2=x_3=\dots=0$ into a symmetric function $f(X)$ in variables
 $X=(x_1,x_2,\dots)$. In particular, $\phi_0$ is the evaluation of the free term of a symmetric
 polynomial, as before. Let us also fix an arbitrary specialization $\rho:\Lambda\to \mathbb C$. As
 explained in Section \ref{section_formal_measures}, $\phi_a$ and $\rho$ can be naturally extended so as to act on elements of $\Lambda_{\phi_a}\subset \overline{\Lambda}$ and $\Lambda_\rho\subset \overline{\Lambda}$, respectively. In
 what follows (and where it leads to no confusion) we write $f(a)$ and $f(\rho)$ for $\phi_a(f)$
 and $\rho(f)$, respectively.

 Recall that values
 $\MPF_{N,\mathbf{A},\mathbf{B}}(\lambda^1,\dots,\lambda^N)$
 of a formal Macdonald process belong to the completed tensor product
 $$\overline{\Lambda_{A^1}}\otimes \dots \otimes  \overline{\Lambda_{A^N}}\otimes
 \overline{\Lambda_{B^1}}\otimes \dots  \otimes \overline{\Lambda_{B^N}}.
 $$

\begin{definition}
\label{def_Mac_ascending_proc}
 Take $N$ non--zero complex numbers $a_1$,\dots, $a_N$ and a specialization $\rho$ such that for some $0<R<1$
 we have:
 $$
  |p_k(\rho)|<R^k,\ k=1,2,\dots, \quad \quad |a_i|R<1,\, i=1,\dots,N.
 $$
The ascending Macdonald process $\MP_{N;\{a_i\};\rho}$ is defined as a composition of the formal
Macdonald
 process
 $\MPF_{N,\mathbf{A},\mathbf{B}}$ and the map $\Theta$:
 $$
  \Theta=\phi_{a_1}\otimes\phi_{a_2}\otimes\dots\otimes \phi_{a_N}\otimes \phi_0
  \otimes\dots\otimes \phi_0 \otimes \rho.
 $$
 In other words,  $\MM_{N;\{a_i\};\rho}$ is a complex--valued measure on $\Y^N$ which sums to $1$ and such that
 $$
  \MP_{N;\{a_i\};\rho}(\lambda^1,\dots,\lambda^N)=\dfrac{
  P_{\lambda_1}(a_1) P_{\lambda^2/\lambda^1}(a_2)\cdots P_{\lambda^N/\lambda^{N-1}}(a_N) Q_{\lambda^N}(\rho) }{\Pi(a_1,\dots,a_N;\rho)},
 $$
 where
 \begin{equation}
 \label{eq_Pi_specialized}
  \Pi(a_1,\dots,a_N;\rho)=\prod_{i=1}^N \Pi(a_i;\rho) = \prod_{i=1}^N \exp\left(\sum_{k=1}^\infty
  \frac{1-t^k}{1-q^k} \cdot \frac{ \sum_{i=1}^N (a_i)^k p_k(\rho)}{k} \right).
 \end{equation}
\end{definition}
{\bf Remark 1.} Our restrictions on $\rho$ and $a_i$ ensure the absolute convergence of the series
$$
 \sum_{\lambda^1,\dots,\lambda^N\in\Y} P_{\lambda_1}(a_1) P_{\lambda^2/\lambda^1}(a_2)\cdots P_{\lambda^N/\lambda^{N-1}}(a_N)
 Q_{\lambda^N}(\rho).
$$
Indeed, this series is a permutation of the absolutely convergent series obtained by expanding the
right--hand side of \eqref{eq_Pi_specialized} in power series in $a_i$, $p_k$, thus it is also
absolutely convergent.

{\bf Remark 2.} If $a_i$ are non--negative reals and $\rho$ takes nonnegative values on Macdonald
symmetric functions, then $\MP_{N;\{a_i\};\rho}$ is a probability measure, i.e., it is positive,
cf.\ \cite[Definition 2.2.7]{BigMac}.

{\bf Remark 3.} $\MP_{N;\{a_i\};\rho}(\lambda^1,\dots,\lambda^N)$ is an analytic function of
the complex numbers $a_i$, $i=1,\dots,N$.

\begin{lemma}
 The support of the ascending Macdonald process $\MP_{N;\{a_i\};\rho}$ is (a subset of) the set of
 sequences $\lambda^1,\dots,\lambda^N$ such that $\ell(\lambda^i)\le i$ for
 $i=1,\dots,N$ and
 $$
  \lambda^i_1\ge \lambda^{i-1}_2 \ge \lambda^i_2 \ge \dots\ge \lambda^{i-1}_{i-1} \ge
  \lambda^{i}_i,
 $$
 for $i=2,\dots,N$ (thus the term ``ascending'').
\end{lemma}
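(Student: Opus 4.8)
The plan is to work directly from the explicit formula for $\MP_{N;\{a_i\};\rho}$ given in Definition \ref{def_Mac_ascending_proc}, namely that $\MP_{N;\{a_i\};\rho}(\lambda^1,\dots,\lambda^N)$ is proportional to
\[
P_{\lambda^1}(a_1)\,P_{\lambda^2/\lambda^1}(a_2)\cdots P_{\lambda^N/\lambda^{N-1}}(a_N)\,Q_{\lambda^N}(\rho),
\]
and to isolate the two structural facts about single--variable (skew) Macdonald polynomials that force the claimed support. First I would recall that $P_{\lambda/\mu}=0$ unless $\mu\subset\lambda$ (stated just after \eqref{eq_skew} in the excerpt); applying this to each factor $P_{\lambda^i/\lambda^{i-1}}(a_i)$ shows that every term in the support satisfies $\lambda^1\subset\lambda^2\subset\cdots\subset\lambda^N$. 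Since $\rho$ is a genuine specialization there is no constraint coming from the $Q_{\lambda^N}(\rho)$ factor, so the only remaining issue is to see that each consecutive pair $\lambda^{i-1}\subset\lambda^i$ is in fact a \emph{horizontal strip}, i.e. that additionally $\lambda^{i}_1\ge\lambda^{i-1}_1\ge\lambda^i_2\ge\lambda^{i-1}_2\ge\cdots$, and that $\ell(\lambda^i)\le i$.

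The key input is that a skew Macdonald polynomial in a \emph{single} variable vanishes unless the skew shape $\lambda/\mu$ is a horizontal strip: $P_{\lambda/\mu}(x_1)=0$ (equivalently $Q_{\lambda/\mu}(x_1)=0$) unless $\lambda/\mu$ is a horizontal strip, in which case it is a monomial $x_1^{|\lambda|-|\mu|}$ times an explicit coefficient. This is the Macdonald analogue of the Pieri rule and is standard (Macdonald, Chapter VI); I would cite it as the branching rule for Macdonald polynomials restricted to one variable. Applying it to $P_{\lambda^i/\lambda^{i-1}}(a_i)$ for each $i=2,\dots,N$ gives precisely the interlacing inequalities $\lambda^i_1\ge\lambda^{i-1}_1\ge\lambda^i_2\ge\cdots$, which is the displayed chain of inequalities in the statement (after reindexing: $\lambda^{i-1}$ has at most $i-1$ parts, so the inequality $\lambda^{i-1}_i\ge\lambda^i_i$ is automatic, and the chain reads as written). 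The length bound $\ell(\lambda^i)\le i$ then follows by induction: $\ell(\lambda^1)\le 1$ since $P_{\lambda^1}(a_1)=0$ unless $\lambda^1$ has at most one part, and if $\ell(\lambda^{i-1})\le i-1$ then the horizontal-strip condition forces $\ell(\lambda^i)\le \ell(\lambda^{i-1})+1\le i$.

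The main obstacle — such as it is — is purely bookkeeping: making sure the single-variable branching/Pieri rule for Macdonald polynomials is quoted in exactly the form needed (horizontal strips, one variable) rather than the general Pieri rule, and then carefully matching the indices in the interlacing chain $\lambda^i_1\ge\lambda^{i-1}_2\ge\lambda^i_2\ge\cdots\ge\lambda^{i-1}_{i-1}\ge\lambda^i_i$ against what "$\lambda^{i-1}\subset\lambda^i$ is a horizontal strip together with $\ell(\lambda^{i-1})\le i-1$" actually yields. There is no analytic content here; the convergence conditions on $\rho$ and the $a_i$ (Remark 1) are only needed for $\MP$ to be well defined, not for the support statement, which is an identity about which coefficients are nonzero. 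So I would present the argument as a short induction on $i$, invoking (i) the vanishing of $P_{\lambda/\mu}$ outside $\mu\subset\lambda$ and (ii) the single-variable horizontal-strip rule, and conclude.
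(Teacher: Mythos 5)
Your proposal is correct and follows essentially the same route as the paper: the paper's proof consists precisely of invoking the fact that $P_{\lambda/\mu}(a)=0$ for a single nonzero variable $a$ unless $\lambda$ and $\mu$ interlace (i.e.\ $\lambda/\mu$ is a horizontal strip), citing \cite[Chapter VI, Section 7]{M}. You simply spell out the bookkeeping (the containment $\lambda^{i-1}\subset\lambda^i$ and the induction giving $\ell(\lambda^i)\le i$) that the paper leaves implicit.
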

\begin{proof} This follows from the fact that for any $a\in\mathbb C\setminus\{0\}$, $P_{\lambda/\mu}(a)=0$
unless
 $$
  \lambda_1\ge\mu_1\ge\lambda_2\ge\mu_2\ge\lambda_3\ge\dots,
 $$
which can be found in \cite[Section 7, Chapter VI]{M}.
\end{proof}

Note that the projection of the ascending Macdonald process $\MP_{N;\{a_i\};\rho}$ to $\lambda^N$
is a \emph{Macdonald measure}, cf.\ Proposition \ref{prop_Mac_pro_restriction}, \cite[Proposition
6.3]{BG}, \cite[Section 2.2.2]{BigMac}, that we now define.

\begin{definition}
 Take $N$ non--zero complex numbers $a_1$,\dots, $a_N$ and a specialization $\rho$ such that for some $0<R<1$
 we have:
 $$
  |p_k(\rho)|<R^k,\ k=1,2,\dots, \quad \quad |a_i|R<1,\, i=1,\dots,N.
 $$
The Macdonald measure $\MM_{N;\{a_i\};\rho}$ is a complex--valued measure on $\Y$ which sums to $1$
such that
 $$
  \MM_{N;\{a_i\};\rho}(\lambda)=\dfrac{
  P_{\lambda}(a_1,\dots,a_N) Q_{\lambda}(\rho)}{\Pi(a_1,\dots,a_N;\rho)}.
 $$
\end{definition}

The projection of the ascending Macdonald process $\MP_{N;\{a_i\};\rho}$ to
$\lambda^1,\dots,\lambda^k$ is again an ascending Macdonald process $\MP_{k;\{a_i\};\rho}$, cf.\
Proposition \ref{prop_Mac_pro_restriction} and \cite[Section 2.2.2]{BigMac}.

\medskip

The rest of this section is devoted to computing expectations of observables of ascending Macdonald
processes (and measures). First, we present analogues of Theorem
\ref{theorem_observable_multilevel} and Corollary \ref{corollary_observable_multilevel_multiplied},
which in this case can be proved in a much simpler way that does not require the formal approach.
Then we show how the expectation of another (much smaller) family of observables can be written in
terms of \emph{Fredholm determinants}.

\subsection{Multilevel moments}

\label{Section_ascending_obs}

Let us introduce certain difference operators which act on analytic functions in $x_1,\dots,x_N$
invariant (symmetric) under the permutations of arguments.

For any subset $I\subset\{1,\dots,N\}$ define
$$
 A_I(x_1,\dots,x_N;t) = \prod_{i\in I} \prod_{j\not \in I} \frac{x_i-tx_j}{x_i-x_j}.
$$
Define the shift operator $T_{q,i}$ through
$$
 [T_{q,i} f](x_1,\dots,x_N) = f(x_1,\dots,x_{i-1}, qx_i,x_{i+1},\dots,x_N).
$$
For any $1\le r\le N$ define the $r$th \emph{Macdonald difference operator} $\Mac^r_N$ through
$$
\Mac^r_N = \sum_{I\subset\{1,\dots,N\}:\, |I|=k} A_I(x_1,\dots,x_N;t) \prod_{i\in I} T_{q,i}.
$$
 One of the important properties of the Macdonald difference operators is the fact that the Macdonald
 polynomials are their eigenfunctions, see \cite[Section 4, Chapter VI]{M}:
 \begin{equation}
 \label{eq_Macdonald_operator_eigen}
  \Mac^r_kP_\lambda(x_1,\dots,x_k;q,t) = e_r(q^{\lambda_1}t^{k-1},q^{\lambda_2}t^{k-2},\dots,q^{\lambda_k})
  P_\lambda(x_1,\dots,x_k;q,t).
 \end{equation}

The following proposition is a key in evaluating expectations of observables of ascending Macdonald
processes.

\begin{proposition}
\label{prop_observable_as_application_of_op}
 Fix sequence of integers $N\ge n_1 \ge n_2\ge\dots\ge n_m\ge 1$ and $r_1,\dots,r_m$, such
 that $0\le r_i \le n_i$ for $i=1,\dots,m$. Then
 \begin{multline}
  \label{eq_observable_as_application_of_op}
  \sum_{\lambda^1,\dots,\lambda^N\in\Y} \prod_{i=1}^m
  e_{r_i}(q^{\lambda^{n_i}_1}t^{n_i-1},q^{\lambda^{n_i}_2}t^{n_i-2},\dots,q^{\lambda^{n_i}_{n_i}}) \MP_{N;\{a_i\};\rho}(\lambda^1,\dots,\lambda^N) \\=
   \dfrac {\Mac^{r_m}_{n_m} \cdots \Mac^{r_1}_{n_1}
   \Pi(x_1,\dots,x_N;\rho)}{\Pi(x_1,\dots,x_N;\rho)}\Bigg|_{x_1=a_1,\dots,x_N=a_N}.
 \end{multline}
\end{proposition}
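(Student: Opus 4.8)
The plan is to iterate the eigenfunction relation \eqref{eq_Macdonald_operator_eigen} against the Cauchy identity \eqref{eq_Cauchy_for_Mac}, using the fact that the ascending Macdonald process is built out of a single $P_\lambda(a_1,\dots,a_N)$ at the top level and $Q_\lambda(\rho)$ at the bottom, once we recognize that the product of $e_{r_i}$-factors evaluated at lower-level partitions $\lambda^{n_i}$ can be re-expressed as a product of $e_{r_i}$-factors evaluated at $\lambda^N$ after summing out the intermediate levels.

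First I would reduce to the single-level (Macdonald measure) case in disguise. Using Proposition~\ref{prop_Mac_pro_restriction} (or its ascending-process analogue quoted just after Definition~\ref{def_Mac_ascending_proc}), the projection of $\MP_{N;\{a_i\};\rho}$ to $(\lambda^1,\dots,\lambda^{n_1})$ is again an ascending Macdonald process, and the further projection to $\lambda^{n_i}$ is the Macdonald measure $\MM_{n_i;\{a_j\};\rho}$ with the specialization $\rho$ suitably enlarged to absorb $a_{n_i+1},\dots,a_N$. More precisely, because $Q_{\lambda^N}(\rho)$ can be split via the skew structure, summing over $\lambda^{n_m+1},\dots,\lambda^N$ (keeping $\lambda^{n_m}$ fixed) replaces $Q_{\lambda^{n_m}}$ by $Q_{\lambda^{n_m}}(\rho')$ where $\rho'$ is the union specialization of $\rho$ with $\phi_{a_{n_m+1}},\dots,\phi_{a_N}$; the key identity here is $\sum_\mu Q_{\lambda/\mu}(\rho)Q_\mu(\rho'')=Q_\lambda(\rho,\rho'')$, i.e.\ \eqref{eq_skew}. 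Iterating this, the left-hand side of \eqref{eq_observable_as_application_of_op} collapses to a sum over the $m$ partitions $\lambda^{n_1}\supseteq\lambda^{n_2}\supseteq\cdots\supseteq\lambda^{n_m}$ weighted by a product of skew $P$'s, exactly matching the ascending Macdonald process on levels $n_m\le n_{m-1}\le\cdots\le n_1$ with top specialization $(a_1,\dots,a_{n_1})$ and bottom specialization $\rho$ thickened by the remaining $a_j$'s. (If some $n_i$ coincide, the corresponding $e_{r_i}$-factors simply multiply at the same level, which is harmless.)

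Next I would apply the Macdonald operators one level at a time. Start from the innermost identity $\sum_{\ell(\lambda)\le n_1}P_\lambda(x_1,\dots,x_{n_1})Q_\lambda(\text{stuff}) = \Pi(x_1,\dots,x_{n_1};\text{stuff})$, apply $\Mac^{r_1}_{n_1}$ in the variables $x_1,\dots,x_{n_1}$, and use \eqref{eq_Macdonald_operator_eigen} to pull out the eigenvalue $e_{r_1}(q^{\lambda_1}t^{n_1-1},\dots,q^{\lambda_{n_1}})$; this is exactly the $\O_{r_1}$-type factor at level $n_1$. Then set the $x$'s at the top, specialize, and re-expand in $Q$'s to expose the level-$n_2$ partition, apply $\Mac^{r_2}_{n_2}$, and so on. Because $n_1\ge n_2\ge\cdots$, each operator acts in a nested set of variables $x_1,\dots,x_{n_i}$, and the operators commute with the specializations of the ``outer'' variables; the bookkeeping is just carefully tracking how $\Pi(x_1,\dots,x_N;\rho)$ factors as $\Pi(x_1,\dots,x_{n_i};\cdot)\cdot\Pi(\cdot;\cdot)$. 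At the end, after all $m$ operators have been applied and everything specialized to $x_j=a_j$, the normalization $\Pi(a_1,\dots,a_N;\rho)$ in the denominator of $\MP_{N;\{a_i\};\rho}$ is precisely the $\Pi$ in the denominator of the right-hand side, and the remaining numerator is $\Mac^{r_m}_{n_m}\cdots\Mac^{r_1}_{n_1}\Pi(x_1,\dots,x_N;\rho)$ evaluated at $x=a$.

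The main obstacle I anticipate is the commutativity/nesting bookkeeping: one must check that applying $\Mac^{r_i}_{n_i}$ in variables $x_1,\dots,x_{n_i}$ after having already applied $\Mac^{r_j}_{n_j}$ for $j<i$ (with $n_j\ge n_i$) does not interfere — i.e.\ that the intermediate specialization-and-re-expansion steps legitimately commute with the difference operators, and that convergence of all the $Q$-series is preserved under the restrictions $|a_i|R<1$, $|p_k(\rho)|<R^k$ from Definition~\ref{def_Mac_ascending_proc} (which, as in Remark~1 there, guarantee absolute convergence so the rearrangements are valid). A clean way to finesse this is to prove the $m=1$ case first as a lemma (a one-line consequence of \eqref{eq_Macdonald_operator_eigen} applied to \eqref{eq_Cauchy_for_Mac}), observe that the projection of an ascending Macdonald process to its lower levels is again such a process (so the $e_{r_i}$-factors at level $n_i$ are genuinely single-level observables of a Macdonald measure with an enlarged $\rho$), and then induct on $m$, peeling off the operator with the largest $n_i$ and invoking the fact that $\Mac^{r}_{n}$ acting on $\Pi(x_1,\dots,x_N;\rho)$ leaves a function of the same shape to which the inductive hypothesis applies.
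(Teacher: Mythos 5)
Your proposal is correct and follows essentially the same route as the paper: expand $\Pi(x_1,\dots,x_{n_1};\rho)$ via the Cauchy identity \eqref{eq_Cauchy_for_Mac}, apply the operators $\Mac^{r_i}_{n_i}$ in nested variable sets using the eigenrelation \eqref{eq_Macdonald_operator_eigen}, use the branching rule \eqref{eq_skew} to expose each successive level $\lambda^{n_i}$, and justify the rearrangements by the elementary bound $|e_r|\le (n_i)^r$ together with the absolute convergence built into Definition \ref{def_Mac_ascending_proc}. (Only your opening sentence about re-expressing the $e_{r_i}$-factors ``at $\lambda^N$'' is off and unnecessary --- your detailed plan does not use it --- and the paper's one shortcut you could adopt is to replace $N$ by $n_1$ on the right-hand side at the outset via multiplicativity of $\Pi$.)
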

{\bf Remark.} The operators $\Mac^r_n$ do \emph{not} commute for different $n$s. Thus the order of
operators in the right side of \eqref{eq_observable_as_application_of_op} is important.
\begin{proof}[Proof of Proposition \ref{prop_observable_as_application_of_op}]
We may immediately replace $N$ on the right-hand side of \eqref{eq_observable_as_application_of_op} with $n_1$ since $\Pi$ is multiplicative in the $x$ variables, and the difference operators only act on $x_1,\ldots, x_{n_1}$.
 We expand $\Pi(x_1,\dots,x_{n_1};\rho)$ in a sum using a specialized version of \eqref{eq_Cauchy_for_Mac}:
 \begin{equation}
 \label{eq_x10}
  \Pi(x_1,\dots,x_{n_1};\rho)=\sum_{\lambda\in\Y:\ell(\lambda)\le n_1} P_\lambda(x_1,\dots,x_{n_1}) Q_\lambda(\rho).
 \end{equation}
 Apply
 $\prod_{i=1}^{k-1} \Mac^{r_i}_{n_i}$
  to the sum, where $k$ is a maximal number such that $n_1=n_{2}=\dots=n_{k-1}$.  Using \eqref{eq_Macdonald_operator_eigen} we get
 \begin{multline}
 \label{eq_x20}
  \prod_{i=1}^{k-1} \Mac^{r_i}_{n_i} \Pi(x_1,\dots,x_{n_1};\rho)\\=
  \sum_{\lambda^{n_1}\in\Y:\ell(\lambda)\le n_1} \left(\prod_{i=1}^{k-1}
  e_{r_i}\left(q^{\lambda^{n_1}_1}t^{{n_1}-1},\dots,q^{\lambda^{n_1}_{n_1}}\right)\right)
  P_{\lambda^{n_1}}(x_1,\dots,x_{n_1}) Q_{\lambda^{n_1}}(\rho).
 \end{multline}
 Note that since $0<q,t<1$,
 $$\left|e_{r}\left(q^{\lambda^{n_1}_{1}}t^{{n_1}-1},\dots,q^{\lambda_{n_1}}\right)\right|\le (n_1)^r.$$
 Therefore, the series in \eqref{eq_x20} is absolutely convergent. Now substitute in \eqref{eq_x20} the decomposition (which is a specialized version of the
 definition \eqref{eq_skew})
 $$
 P_{\lambda^{n_1}}(x_1,\dots,x_{n_1})= \sum_{\lambda^{n_k}\in\Y: \ell(\lambda^{n_k})\le n_k}
 P_{\lambda^{n_k}}(x_1,\dots,x_{n_k}) P_{\lambda^{n_1}/\lambda^{n_k}}(x_{n_k+1},\dots,x_{n_1})
 $$
 and apply (again using \eqref{eq_Macdonald_operator_eigen})
 $\prod_{i=k}^{h-1} \Mac^{r_i}_{n_i}$
 to the resulting sum, where $h$ is a maximal number such that $n_k=n_{k+1}=\dots=n_{h-1}$.
 Iterating this procedure we arrive at the
 desired statement.
\end{proof}

The next two theorems express averages of a class of observables of  ascending Macdonald processes
through contour--integrals.

\begin{theorem}
\label{theorem_ascending_multilevel_1}
 Take $N\ge 1$, non--zero complex numbers $a_1$,\dots, $a_N$, and a specialization $\rho$ such that for some $0<R<1$
 we have:
 $$
  |p_k(\rho)|<R^k,\ k=1,2,\dots, \quad \quad |a_i|R<1,\, i=1,\dots,N.
 $$
 Fix sequences of integers $N\ge n_1 \ge n_2\ge\dots\ge n_m\ge 1$ and $r_1,\dots,r_m$, such
 that $0\le r_i \le n_i$ for $i=1,\dots,m$. Suppose that there exist closed positively oriented contours
 $\gamma^{\alpha}_i$, $\alpha=1,\dots,m$, $i=1,\dots,r_\alpha$, in the complex plane such that
 \begin{enumerate}[(I)]
 \item All contours lie inside $\mathbb D_{R^{-1}}=\{z\in\mathbb C\mid |z|<R^{-1}\}$.
\item All contours enclose the  points $a_1,\dots,a_N$, but not $0$.
\item The contour $\gamma^\alpha_i$ does not enclose  points $t^{-1} q^{k}a_1,\dots,
t^{-1}q^{k}a_N$, $k=0,\dots,m-\alpha$.
\item  The contour $\gamma^{\alpha}_i$ intersects neither the interior of the image of $\gamma^{\beta}_j$ multiplied by $q$,
nor the interior of the image of $\gamma^{\beta}_j$ multiplied by $t^{-1}$
 for $i=1,\dots,r_\alpha$, $j=1,\dots,r_\beta$ and $\beta>\alpha$.
 \end{enumerate}
Then we have
\begin{align}
\label{eq_x28}
\nonumber&  \sum_{\lambda^1,\dots,\lambda^N} \prod_{i=1}^m
  e_{r_i}(q^{\lambda^{n_i}_1}t^{n_i-1},\dots,q^{\lambda^{n_i}_{n_i}})
  \MP_{N;\{a_i\};\rho}(\lambda^1,\dots,\lambda^N)\\
  &= \prod_{\alpha=1}^m \frac{1}{(2\pi \i)^{r_\alpha} r_\alpha!} \oint\dots\oint
  \prod_{1\le \alpha <\beta \le m} \left(\prod_{i=1}^{r_\alpha} \prod_{i=1}^{r_\beta}
  \frac{(tz^{\alpha}_i-q z^{\beta}_j)(z^{\alpha}_i-z^{\beta}_j)}{(z^\alpha_i-q z^{\beta}_j)(t
  z^{\alpha}_i - z^{\beta}_j)}\right)\\
  \nonumber& \times \prod_{\alpha=1}^m \left(
  \det\left[\frac{1}{tz^{\alpha}_i-z^{\beta}_j}\right]_{i,j=1}^{r_\alpha} \prod_{i=1}^{r_\alpha}
  \left(\left(\prod_{j=1}^{n_\alpha} \frac{tz^{\alpha}_i-a_j}{z^{\alpha}_i-a_j}\right)
  \frac{\Pi(qz^{\alpha}_i;\rho)}{\Pi(z^{\alpha}_i;\rho)}dz^{\alpha}_i\right)\right)
 \end{align}
 where $z^{\alpha}_i$ is integrated over $\gamma^{\alpha}_i$.
\end{theorem}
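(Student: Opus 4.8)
The plan is to avoid the formal setting of the preceding sections and reduce the identity to a computation with the Macdonald difference operators. By Proposition \ref{prop_observable_as_application_of_op}, the left-hand side of \eqref{eq_x28} equals
\begin{equation*}
\frac{\Mac^{r_m}_{n_m}\cdots \Mac^{r_1}_{n_1}\,\Pi(x_1,\dots,x_N;\rho)}{\Pi(x_1,\dots,x_N;\rho)}\Bigg|_{x_1=a_1,\,\dots,\,x_N=a_N},
\end{equation*}
so I would evaluate the numerator by applying the $m$ operators successively, innermost ($\Mac^{r_1}_{n_1}$) first, and rewriting the result as a contour integral after each application. The assumptions on $\rho$ and the $a_i$ provide, exactly as in the proof of Proposition \ref{prop_observable_as_application_of_op}, absolute convergence of all the series that occur, so the necessary interchanges of summation and integration are justified.

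The first ingredient is a contour representation for a single $\Mac^r_n$ acting on a product $G=\prod_{i=1}^n g(x_i)$: since $T_{q,i}$ merely multiplies the $i$-th factor by $g(qx_i)/g(x_i)$,
\begin{equation*}
\frac{\Mac^r_n G}{G}=\sum_{I\subset\{1,\dots,n\},\,|I|=r}\ \prod_{i\in I}\prod_{j\notin I}\frac{x_i-tx_j}{x_i-x_j}\ \prod_{i\in I}\frac{g(qx_i)}{g(x_i)},
\end{equation*}
and I would show this rational expression equals $\tfrac{1}{(2\pi\i)^r r!}$ times the integral of $\det[\tfrac{1}{tz_k-z_l}]_{k,l=1}^r\prod_{l=1}^r\bigl(\prod_{j=1}^n\tfrac{tz_l-x_j}{z_l-x_j}\bigr)\tfrac{g(qz_l)}{g(z_l)}$ over circles surrounding $x_1,\dots,x_n$ but neither $0$ nor the poles of $g(qz)/g(z)$. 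This step uses the same mechanism as the proof of Proposition \ref{proposition_restated_computation_from_BigMac}: the Cauchy determinant \eqref{eq_Cauchy_det} vanishes when two variables coincide, and the zeros of $\prod_j\tfrac{tz_l-x_j}{z_l-x_j}$ cancel the poles of the determinant coming from off-diagonal ``string'' configurations, so that only the residues at $z_l=x_j$ contribute. For $r=1$ this is the Heisenberg-type formula for the first Macdonald operator mentioned in Remark 4 following Proposition \ref{proposition_restated_computation_from_BigMac}, and it is close to Lemma \ref{lemma_from_BigMac}.

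The engine of the proof is that this representation is closed under iteration. Starting from $g=\Pi(\,\cdot\,;\rho)$, one application of $\Mac^{r_1}_{n_1}$ inserts the factor $\prod_{i=1}^{r_1}\prod_{j=1}^{n_1}\tfrac{tz^1_i-x_j}{z^1_i-x_j}$ into the integrand, and multiplying back by $\Pi(x_1,\dots,x_N;\rho)$ leaves a product $\prod_j g_1(x_j)$ with $g_1(x)=\Pi(x;\rho)\prod_{i=1}^{r_1}\tfrac{tz^1_i-x}{z^1_i-x}$ on the variables $x_1,\dots,x_{n_2}$ that $\Mac^{r_2}_{n_2}$ acts on (here one uses $n_2\le n_1$). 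Since $\Mac^{r_2}_{n_2}$ commutes with the $z^1$-integration, applying it and invoking the single-operator formula again extracts from $g_1(qz^2)/g_1(z^2)$ exactly the factor $\tfrac{\Pi(qz^2_j;\rho)}{\Pi(z^2_j;\rho)}\prod_{i=1}^{r_1}\tfrac{(tz^1_i-qz^2_j)(z^1_i-z^2_j)}{(z^1_i-qz^2_j)(tz^1_i-z^2_j)}$, which is the $(\alpha,\beta)=(1,2)$ cross-term in \eqref{eq_x28}. Inductively, when $\Mac^{r_\beta}_{n_\beta}$ is applied the relevant single-variable function is $\Pi(x;\rho)\prod_{\alpha<\beta}\prod_i\tfrac{tz^\alpha_i-x}{z^\alpha_i-x}$, whose ratio $g(qz^\beta)/g(z^\beta)$ yields the $\Pi$-factor together with all cross-terms indexed by the earlier $\alpha$; after the $m$-th operator and the substitution $x_j=a_j$ (which turns $\prod_j\tfrac{tz^\alpha_i-x_j}{z^\alpha_i-x_j}$ into $\prod_{j=1}^{n_\alpha}\tfrac{tz^\alpha_i-a_j}{z^\alpha_i-a_j}$) one arrives at the right-hand side of \eqref{eq_x28}.

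The main obstacle will be the contour bookkeeping: verifying that conditions (I)--(IV) are exactly what is needed to keep every residue computation in this iteration valid. I expect the roles to be as follows. Condition (II) makes each $\gamma^\alpha_i$ enclose $a_1,\dots,a_{n_\alpha}$ but not $0$, so the residues kept at step $\alpha$ are precisely those at $z^\alpha_i=a_j$. Condition (I) keeps the contours inside $\mathbb D_{R^{-1}}$, where $\Pi(z;\rho)$ and $\Pi(qz;\rho)$ are holomorphic and $\Pi(z;\rho)$ is nonvanishing, so $\Pi(qz;\rho)/\Pi(z;\rho)$ contributes no poles. Condition (IV) places $q^{-1}z^\alpha_i$ and $tz^\alpha_i$ (for $\alpha<\beta$, $z^\alpha_i\in\gamma^\alpha_i$) outside $\gamma^\beta_j$; these are exactly the poles in $z^\beta$ of the cross-term $\tfrac{(tz^\alpha_i-qz^\beta)(z^\alpha_i-z^\beta)}{(z^\alpha_i-qz^\beta)(tz^\alpha_i-z^\beta)}$, so no spurious residues arise when $\Mac^{r_\beta}_{n_\beta}$ is applied. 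Finally, condition (III) keeps $t^{-1}q^k a_j$, $0\le k\le m-\alpha$, outside $\gamma^\alpha_i$: these are the zeros of the $q$-shifted factors $\prod_j\tfrac{tz^\alpha_i-q^k a_j}{z^\alpha_i-q^k a_j}$ generated by the $m-\alpha$ operators applied after $\Mac^{r_\alpha}_{n_\alpha}$, and keeping them outside $\gamma^\alpha_i$ is what prevents (or, where unavoidable, kills by pole--zero cancellation against the determinant) the unwanted ``string'' residues at each stage, in the spirit of the deformation argument in the proof of Proposition \ref{proposition_restated_computation_from_BigMac}. Once these incidence relations are checked, the theorem follows.
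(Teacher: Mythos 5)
Your proposal follows essentially the same route as the paper's own proof: reduce to Proposition \ref{prop_observable_as_application_of_op}, represent each $\Mac^{r_\alpha}_{n_\alpha}$ acting on a product of single-variable functions as a Cauchy-determinant contour integral (the paper's identity \eqref{eq_x21}), iterate with the modified function $\Pi'(x;\rho)=\Pi(x;\rho)\prod_i\frac{tz^\alpha_i-x}{z^\alpha_i-x}$ to generate the cross-terms, and justify the residue computations via conditions (I)--(IV) exactly as you describe. The argument and the roles you assign to the contour conditions match the paper's proof, so no further comparison is needed.
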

{\bf Remark 1.} When all $a_i$ are equal, $t\ll 1$ and $q$ is close to $1$, then the required
contours do exist, see e.g.\ \cite[Figure 3.1]{BigMac}. However, for some specific choices of
$a_i$, $t$ and $q$ the desired contours may fail to exist, see \cite{BG_GFF} for the study of one
such case.

{\bf Remark 2.} In the case $n_1=n_2=\dots=n_m$ this theorem is equivalent to \cite[Proposition
2.2.15]{BigMac}.

\begin{proof}[Proof of Theorem \ref{theorem_ascending_multilevel_1}] The proof is similar to that of \cite[Proposition
2.2.15]{BigMac}.

 We use Proposition \ref{prop_observable_as_application_of_op} and sequentially
apply the operators $\Mac^{r_i}_{n_i}$. At the first step we need the following identity:
\begin{multline}
 \Mac^{r_1}_{n_1}
   \Pi(x_1,\dots,x_N;\rho)= \\
   \frac{\Pi(x_1,\dots,x_N;\rho)}{(2\pi \i) (r_1)!} \oint\dots\oint
  \det\left[\frac{1}{tz^1_i-z^1_j}\right]_{i,j=1}^{r} \prod_{i=1}^{r_1}
  \left(\left(\prod_{j=1}^{n_1} \frac{tz^1_i-x_j}{z^1_{i}-x_j}\right)
  \frac{\Pi(qz^1_i;\rho)}{\Pi(z^1_i;\rho)}dz^1_{i}\right)
  \label{eq_x21}
\end{multline}
with $z^1_i$ integrated over $\gamma^1_i$ and with $x_1,\dots,x_N$ being arbitrary complex numbers
inside $\gamma^1_i$ such that $t^{-1} x_i$, $i=1,\dots,N$, are outside $\gamma^1_i$ (note that we
will later need $x_i$ to be equal to $q^k a_i$, for various $k\ge 0$, hence,  restriction $III$ on
the contours). The formula \eqref{eq_x21} is proved by expanding the right side of \eqref{eq_x21}
as a sum of residues, for that we need $\Pi(qz;\rho)/\Pi(z;\rho)$ to be analytic inside the
contour, hence restriction $I$ on the contours.

We claim that the only poles of the integrand inside the contour are at the points $z_i^1=x_j$,
$i=1,\dots,r$, $j=1,\dots,n$ (our choice of $\{x_j\}$ and restriction $II$ on the contours
guarantees that these points will always be inside, while $0$ is outside). To see this observe that
the only other possible singularities of the integrand arise when $z^1_{i_1}=x_j$,
$z^1_{i_2}=tx_j$, $z^1_{i_3}=t^2 x_j,\dots$ or $z^1_{i_1}=x_j$, $z_{i_2}=t^{-1}x_j$,
$z_{i_3}=t^{-2} x_j,\dots$ (here $i_1,i_2,\ldots$ are unique elements of $\{1,\ldots, r\}$ and
$j\in \{1,\ldots, n_1\}$). The first possibility actually does not produce a pole because of the
factor $(tz^1_i-x_j)$ in the integrand, and the second possibility is outside the contour because
of our choice of $\{x_j\}$. These considerations imply the claim.

The residues of the integrand at the points $z^1_i=x_j$, $i=1,\dots,r$, $j=1,\dots,n$ give the same
summands as in the definition of the operator $\Mac^{r_1}_{n_1}$ (note the factor $r_1!$ which
appears because we sum over subsets in the definition of $\Mac^{r_1}_{n_1}$ and over \emph{ordered}
subsets when expanding the integral as a sum of residues).

Next, we apply $\Mac^{r_2}_{n_2}$ to the right side of \eqref{eq_x21}. Note that by linearity we
can apply the difference operator under the integral. The part of the right--hand side of
\eqref{eq_x21} which is dependent on the $\{x_j\}_{j=1}^{n_2}$ is
$$
 \Pi(x_1,\dots,x_{n_2};\rho) \left(\prod_{i=1}^{r_1}\prod_{j=1}^{n_2}
 \frac{tz^1_i-x_j}{z^1_{i}-x_j}\right)=\prod_{j=1}^{n_2} \left( \Pi(x_j;\rho) \prod_{i=1}^{r_1}
 \frac{tz^1_i-x_j}{z^1_{i}-x_j}\right)=:\prod_{j=1}^{n_2}\Pi'(x_j,\rho).
$$
We can use an analogue of \eqref{eq_x21} to express
$$
\Mac^{r_2}_{n_2}\left(\prod_{j=1}^{n_2} \Pi'(x_j;\rho) \right)
$$
using the contours $\gamma^2_i$,
$i=1,\dots,r_2$. We have
$$
\frac{\Pi'(qz;\rho)}{\Pi'(z;\rho)}= \frac{\Pi(qz;\rho)}{\Pi(z;\rho)}\cdot
\frac{tz^1_i-qz}{z^1_{i}-qz} \cdot \frac{z^1_{i}-z} {tz^1_i-z},
$$
hence the beginning of the appearance of the product $\prod_{1\le \alpha <\beta \le m}$ in
\eqref{eq_x28}. Further, $\Pi'(qz;\rho)/\Pi'(z;\rho)$ must be analytic inside the contours
$\gamma^2_i$, hence, the restriction $IV$ on contours.

Further iterating this procedure for $\Mac^{r_3}_{n_3}$, \dots, $\Mac^{r_m}_{n_m}$ leads to integrals
over the contours $\gamma^3_i$, \dots $\gamma^m_i$ and ultimately \eqref{eq_x28}.
\end{proof}

The next statement is a version of Theorem \ref{theorem_ascending_multilevel_1} with a different
set of observables.

\begin{theorem}
\label{theorem_observable_with_vars_inversed}
 Take $N$ non--zero complex numbers $a_1$,\dots,
  $a_N$ and a specialization $\rho$ such that for some $0<R<1$ and $m\ge 1$
 we have:
 $$
  |p_k(\rho)|<R^k,\ k=1,2,\dots, \quad \quad |a_i|R<q^m,\, i=1,\dots,N.
 $$
 Fix two sequences of integers $N\ge n_1 \ge n_2\ge\dots\ge n_m\ge 1$ and $r_1,\dots,r_m$, such
 that $0\le r_i \le n_i$ for $i=1,\dots,m$. Suppose that there exist closed positively oriented contours
 $\gamma^{\alpha}_i$, $\alpha=1,\dots,m$, $i=1,\dots,r_\alpha$ such that
 \begin{enumerate}[(I)]
 \item All contours lie inside $\mathbb D_{qR^{-1}}=\{z\in\mathbb C\mid |z|<qR^{-1}\}$.
\item All contours enclose the points $a_1,\dots,a_N$, but not $0$.
\item The contour $\gamma^\alpha_i$ does not enclose  points $t q^{-k}a_1,\dots,
t q^{-k}a_N$, $k=0,\dots,m-\alpha$.
\item  The contour $\gamma^{\alpha}_i$ intersects neither the interior of the image of $\gamma^{\beta}_j$ multiplied by $q^{-1}$,
nor the interior of the image image of $\gamma^{\beta}_j$ multiplied by $t$
 for $i=1,\dots,r_\alpha$, $j=1,\dots,r_\beta$ and $\beta>\alpha$.
 \end{enumerate}
Then we have
\begin{align}
 \label{eq_observable_with_vars_inversed}
\nonumber&  \sum_{\lambda^1,\dots,\lambda^N\in\Y} \prod_{i=1}^m
  e_{r_i}(q^{-\lambda^{n_i}_1}t^{1-n_i},\dots,q^{-\lambda^{n_i}_{n_i}}) \MP_{N;\{a_i\};\rho}(\lambda^1,\dots,\lambda^N)
  \\&= \prod_{\alpha=1}^m \frac{1}{(2\pi \i)^{r_\alpha} r_\alpha!} \oint\dots\oint
  \prod_{1\le \alpha <\beta \le m} \left(\prod_{i=1}^{r_\alpha} \prod_{i=1}^{r_\beta}
  \frac{(z^{\alpha}_i-tq^{-1} z^{\beta}_j)(z^{\alpha}_i-z^{\beta}_{j})}{(z^{\alpha}_{i}-q^{-1}
  z^{\beta}_{j})(z^{\alpha}_{i} -t z^{\beta}_j)}\right)
  \\\nonumber& \times \prod_{\alpha=1}^m \left(
  \det\left[\frac{1}{t^{-1}z^{\alpha}_i-z^{\alpha}_j}\right]_{i,j=1}^{r_\alpha} \prod_{j=1}^{r_\alpha}
  \left(\left(\prod_{i=1}^{n_\alpha} \frac{t^{-1}z^{\alpha}_j-a_i}{z^{\alpha}_j-a_i}\right)
  \frac{\Pi(q^{-1}z^{\alpha}_j;\rho)}{\Pi(z^{\alpha}_j;\rho)}dz^\alpha_j\right)\right)
 \end{align}
 where $z^{\alpha}_i$ is integrated over $\gamma^{\alpha}_i$.
\end{theorem}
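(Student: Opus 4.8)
The plan is to run the proof of Theorem~\ref{theorem_ascending_multilevel_1} with the Macdonald difference operators $\Mac^{r}_{n}$ replaced everywhere by their images under the formal substitution $q\mapsto q^{-1}$, $t\mapsto t^{-1}$, namely
\[
\widetilde{\Mac}{}^{r}_{n}=\sum_{\substack{I\subset\{1,\dots,n\}\\ |I|=r}}\Bigl(\prod_{i\in I}\prod_{j\notin I}\frac{x_i-t^{-1}x_j}{x_i-x_j}\Bigr)\prod_{i\in I}T_{q^{-1},i},
\]
where $T_{q^{-1},i}$ is the substitution $x_i\mapsto q^{-1}x_i$. The reason this works is that $\widetilde{\Mac}{}^{r}_{n}$, although it is the $r$-th Macdonald operator for the parameters $(q^{-1},t^{-1})$, is diagonalized by the \emph{same} functions $P_\lambda$ as $\Mac^{r}_{n}$. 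Indeed $P_\lambda(x;q^{-1},t^{-1})=P_\lambda(x;q,t)$: in Definition~\ref{Macdef} the leading--term normalization does not involve $q,t$, and for homogeneous $f,g$ of degree $d$ one has $\langle f,g\rangle_{q^{-1},t^{-1}}=(t/q)^{d}\langle f,g\rangle_{q,t}$ (since $\tfrac{1-q^{-k}}{1-t^{-k}}=(t/q)^{k}\tfrac{1-q^{k}}{1-t^{k}}$), so the orthogonality condition is unchanged; see \cite[Chapter VI]{M}. Substituting $(q,t)\mapsto(q^{-1},t^{-1})$ into \eqref{eq_Macdonald_operator_eigen} therefore gives
\[
\widetilde{\Mac}{}^{r}_{k}\,P_\lambda(x_1,\dots,x_k;q,t)=e_r\bigl(q^{-\lambda_1}t^{1-k},q^{-\lambda_2}t^{2-k},\dots,q^{-\lambda_k}\bigr)\,P_\lambda(x_1,\dots,x_k;q,t),
\]
and the eigenvalues here are exactly the observables appearing on the left of \eqref{eq_observable_with_vars_inversed}.

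With this, the first step is the analogue of Proposition~\ref{prop_observable_as_application_of_op}: the left--hand side of \eqref{eq_observable_with_vars_inversed} equals
\[
\frac{\widetilde{\Mac}{}^{r_m}_{n_m}\cdots\widetilde{\Mac}{}^{r_1}_{n_1}\,\Pi(x_1,\dots,x_N;\rho)}{\Pi(x_1,\dots,x_N;\rho)}\bigg|_{x_1=a_1,\dots,x_N=a_N}.
\]
Its proof repeats that of Proposition~\ref{prop_observable_as_application_of_op} verbatim: expand $\Pi$ in $x_1,\dots,x_{n_1}$ by \eqref{eq_Cauchy_for_Mac}, apply the operators with a common lower index using the displayed eigenvalue relation, branch $P_\lambda$ as in \eqref{eq_skew}, and iterate. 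The only point needing a little care is absolute convergence of the intermediate series: the inverse observables are no longer bounded, but one has the estimate $|e_r(q^{-\lambda_1}t^{1-n},\dots,q^{-\lambda_n})|\le C(n,t)\,q^{-|\lambda|}$, and the hypothesis $|a_i|R<q^{m}$ (which is precisely why the stronger hypothesis replaces $|a_i|R<1$ of Theorem~\ref{theorem_ascending_multilevel_1}) makes the extra factor $q^{-|\lambda|}$ amount, by homogeneity of $P_\lambda$, to replacing each $a_i$ by $q^{-1}a_i$ inside a still convergent $\Pi$-type sum.

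The second step converts the last display into the contour integral of \eqref{eq_observable_with_vars_inversed}, following the proof of Theorem~\ref{theorem_ascending_multilevel_1} line by line. One first establishes the one--operator identity obtained from \eqref{eq_x21} by the substitution $q\mapsto q^{-1}$, $t\mapsto t^{-1}$ \emph{in the operator only} (keeping $\Pi$, and hence the Macdonald measure, intact), by expanding the right side as a sum of residues; here condition (I), now $|z|<qR^{-1}$, is exactly what makes $\Pi(q^{-1}z;\rho)$ analytic on the relevant disc, the only contributing poles are at $z^{1}_i=a_j$ (their residues reproducing the summands $\bigl(\prod_{i\in I}\prod_{j\notin I}\tfrac{x_i-t^{-1}x_j}{x_i-x_j}\bigr)\prod_{i\in I}T_{q^{-1},i}\Pi$ of $\widetilde{\Mac}{}^{r_1}_{n_1}$, with the factor $(r_1)!$ from the passage to ordered tuples), and $0$ is outside by condition (II). One then applies $\widetilde{\Mac}{}^{r_2}_{n_2}$ under the integral sign; the part of the integrand depending on $x_1,\dots,x_{n_2}$ factors as $\prod_{j\le n_2}\Pi'(x_j;\rho)$ with $\Pi'(x;\rho)=\Pi(x;\rho)\prod_i\tfrac{t^{-1}z^{1}_i-x}{z^{1}_i-x}$, and
\[
\frac{\Pi'(q^{-1}z;\rho)}{\Pi'(z;\rho)}=\frac{\Pi(q^{-1}z;\rho)}{\Pi(z;\rho)}\prod_i\frac{(z^{1}_i-tq^{-1}z)(z^{1}_i-z)}{(z^{1}_i-q^{-1}z)(z^{1}_i-tz)},
\]
which is the source of the cross--product $\prod_{\alpha<\beta}(\cdots)$ in \eqref{eq_observable_with_vars_inversed}. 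Iterating over $\widetilde{\Mac}{}^{r_3}_{n_3},\dots,\widetilde{\Mac}{}^{r_m}_{n_m}$ and finally setting $x_i=a_i$ produces the claimed formula; conditions (III) and (IV)---the $(q,t)\mapsto(q^{-1},t^{-1})$ mirror images of the corresponding conditions of Theorem~\ref{theorem_ascending_multilevel_1}---are used along the way to keep the ``strings'' of spurious residues involving the shifted points $q^{-k}a_j$ outside the contours, and to keep the new poles $z^{\beta}=qz^{\alpha}$, $z^{\beta}=t^{-1}z^{\alpha}$ introduced by $\widetilde{\Mac}{}^{r_\beta}_{n_\beta}$ outside the contours $\gamma^{\beta}$, respectively.

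The one genuinely delicate part is the residue bookkeeping in the second step: checking that every contribution other than those at $z^{\alpha}_i=a_j$ cancels or vanishes, and that conditions (I)--(IV) together with the bound $|a_i|R<q^{m}$ are calibrated exactly for this. As all of these are the $(q,t)\mapsto(q^{-1},t^{-1})$ analogues of the ingredients in the proof of Theorem~\ref{theorem_ascending_multilevel_1}, this is a routine---if lengthy---verification, and everything else transfers directly.
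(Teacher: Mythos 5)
Your proposal is correct and follows essentially the same route as the paper's proof: the paper likewise verifies absolute convergence of the left side via the bound $|e_r(q^{-\lambda_1}t^{1-n},\dots,q^{-\lambda_n})|\le q^{-|\lambda|}t^{-r}n^r$ together with the hypothesis $|a_i|R<q^m$ (absorbed into a modified specialization $q^{-m}\rho$), then uses the invariance $P_\lambda(\cdot;q,t)=P_\lambda(\cdot;q^{-1},t^{-1})$ to diagonalize the $(q^{-1},t^{-1})$-Macdonald operators with the inverted eigenvalues, and finally repeats the argument of Theorem \ref{theorem_ascending_multilevel_1} with $q$ and $t$ formally inverted. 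Your write-up in fact spells out some details (the $(t/q)^d$ rescaling of the scalar product, the residue bookkeeping for the cross-terms) that the paper leaves implicit.
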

\begin{proof}
 First, note that since $0<q,t<1$,
 $$|e_{r}\left(q^{-\lambda_1}t^{1-{n}},\dots,q^{-\lambda_{n}}\right)|\le q^{-|\lambda|} t^{-r} n^r.$$
 Therefore, the sum in the left side of \eqref{eq_observable_with_vars_inversed} is absolutely
 bounded by
 \begin{align*}
& {\rm const}\cdot  \sum_{\lambda^1,\dots,\lambda^N\in\Y}
   q^{-m|\lambda^N|}\left|\MP_{N;\{a_i\};\rho}(\lambda^1,\dots,\lambda^N)\right|
 \\&= {\rm const} \cdot  \sum_{\lambda^1,\dots,\lambda^N\in\Y}
 \left|\MP_{N;\{a_i\};q^{-m}\rho}(\lambda^1,\dots,\lambda^N)\right|,
 \end{align*}
 where $q^{-m} \rho$ is the specialization defined by
 $$
  p_k(q^{-m}\rho)=q^{-km} p_k(\rho),
 $$
 which, by the hypothesis of the theorem (that $|p_k(\rho)|<R^k$) implies that $|p_{k}(q^{-m}\rho)|<(q^{-m}R)^k$. Therefore
 $$
 |(a_i)^k p_k(q^{-m}\rho)|<r^k
 $$
 for some $r<1$.
 Now the absolute convergence of the series in the Definition \ref{def_Mac_ascending_proc}
 implies that the series in the left side of \eqref{eq_observable_with_vars_inversed} is absolutely
 convergent.

The definition of Macdonald symmetric functions $P_\lambda$ implies that they are invariant under the change of parameters
 $(q,t)\leftrightarrow(q^{-1},t^{-1})$ (see \cite[Section VI.4, (4.14),(iv)]{M}). In other words,
 restoring the notational dependence of $P_\lambda$ on $(q,t)$, we have
 $$
  P_\lambda(\,\cdot\,;q,t) = P_\lambda(\,\cdot\,; q^{-1},t^{-1}).
 $$
 Therefore, if we replace all instances of $q$ and $t$ in the definition of the operator $\Mac^r_k$ by $q^{-1}$
 and $t^{-1}$, respectively, and denote the resulting operator through $\widehat \Mac^r_k$, then
$$
  \widehat \Mac^r_kP_\lambda(x_1,\dots,x_k;q,t) = e_r(q^{-\lambda_1}t^{1-k},\dots,q^{-\lambda_k})
  P_\lambda(x_1,\dots,x_k;q,t).
$$
It follows that an analogue of Proposition \ref{prop_observable_as_application_of_op} holds for
$\widehat \Mac^r_k$ and we can repeat the proof of Theorem \ref{theorem_ascending_multilevel_1} (as
long as all the series involved converge). The final formula
\eqref{eq_observable_with_vars_inversed} is obtained from the result of Theorem
\ref{theorem_ascending_multilevel_1} through the formal inversion of $q$ and $t$.
\end{proof}

One interesting limit of the above formulas can be obtained by sending $t\to 0$. The limits of
Macdonald symmetric functions themselves as $t\to 0$ are known as $q$--Whittaker functions, cf.\
\cite{GLO}. Denote
$$
  \WP_{N;\{a_i\};\rho}=\lim_{t\to 0} \MP_{N;\{a_i\};\rho}.
$$

A straightforward limit of (the case $r_i=1$, $i=1,\dots,m$ of) Theorem
\ref{theorem_ascending_multilevel_1} gives the following statement.
\begin{corollary}
\label{Corollary_q_TASEP}
 Take $N$ non--zero complex numbers $a_1$,\dots, $a_N$ and a specialization $\rho$ such that
  for some $0<R<1$ we have
 $$
  |p_k(\rho)|<R^k,\ k=1,2,\dots, \quad \quad |a_i|R<1,\, i=1,\dots,N.
 $$
 Fix a sequence of integers $N\ge n_1 \ge n_2\ge\dots\ge n_1\ge 1$.
 Suppose that there exist complex closed positively oriented contours
 $\gamma_{\alpha}$, $\alpha=1,\dots,m$, such that
 \begin{enumerate}[(I)]
 \item All contours lie inside $\mathbb D_{R^{-1}}=\{z\in\mathbb C\mid |z|<R^{-1}\}$.
\item All contours enclose the points $a_1,\dots,a_N$, but not $0$,
\item  $\gamma_{\alpha}$ contour does not intersect the interior of the image of $\gamma_{\beta}$ multiplied by $q$  for
 $\beta>\alpha$.
 \end{enumerate}
Then we have
\begin{multline}
\label{eq_x22}
  \sum_{\lambda^1,\dots,\lambda^N\in\Y} \prod_{i=1}^m
  q^{\lambda^{n_i}_{n_i}} \WP_{N;\{a_i\};\rho}(\lambda^1,\dots,\lambda^N)
  \\= \frac{(-1)^m q^{m(m-1)/2}}{(2\pi \i)^m} \oint\dots\oint
  \prod_{1\le \alpha <\beta \le m}
  \frac{z_{\alpha}-z_{\beta}}{z_{\alpha}-q z_{\beta}}
   \prod_{\alpha=1}^m \left(
  \prod_{i=1}^{n_\alpha} \frac{a_i}{a_i-z_{\alpha}}\right)
  \frac{\widetilde \Pi(qz_{\alpha};\rho)}{\widetilde \Pi(z_{\alpha};\rho)}\frac{dz_{\alpha}}{z_{\alpha}}
 \end{multline}
 where $z_{\alpha}$ is integrated over $\gamma_\alpha$ and
 $$
  \widetilde\Pi(z;\rho)=\lim_{t\to 0} \Pi(z;\rho)=\exp\left(\sum_{k=1}^{\infty} \frac{z^k
  p_k(\rho)}{k(1-q^k)}\right).
 $$
 \end{corollary}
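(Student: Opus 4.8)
The plan is to read off \eqref{eq_x22} as the $t\to 0$ degeneration of Theorem \ref{theorem_ascending_multilevel_1} in the special case $r_1=\dots=r_m=1$. Fix contours $\gamma_1,\dots,\gamma_m$ satisfying (I)--(III) of the corollary. I would first verify that, for all sufficiently small $t>0$, these same contours meet hypotheses (I)--(IV) of Theorem \ref{theorem_ascending_multilevel_1} with $r_\alpha=1$, $z^\alpha_1=z_\alpha$. Hypotheses (I) and (II) are literally those of the corollary. Hypothesis (III) of Theorem \ref{theorem_ascending_multilevel_1} forbids enclosing the points $t^{-1}q^k a_j$, which escape to $\infty$ as $t\to0$ while the contours stay inside the fixed disc $\mathbb D_{R^{-1}}$, so it holds for small $t$. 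In hypothesis (IV) (for $\beta>\alpha$) the requirement on $q\gamma_\beta$ is exactly condition (III) of the corollary, and the requirement on $t^{-1}\gamma_\beta$ reads $\gamma_\alpha\cap\mathrm{int}(t^{-1}\gamma_\beta)=\varnothing$, i.e.\ $t\gamma_\alpha\cap\mathrm{int}(\gamma_\beta)=\varnothing$; since $t\gamma_\alpha$ collapses into any neighbourhood of $0$ and $0$ lies in the open exterior of the compact set $\overline{\mathrm{int}(\gamma_\beta)}$, this holds for small $t$ as well. Hence \eqref{eq_x28} with all $r_i=1$ is available for small $t>0$, and it remains to let $t\to0$ on both sides.

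On the right-hand side the contours are fixed and compact, and the integrand is a finite product of factors each of which converges uniformly and is uniformly bounded on them as $t\to0$: the $1\times1$ Cauchy determinant $\tfrac1{tz_\alpha-z_\alpha}\to-\tfrac1{z_\alpha}$; the factors $\prod_{j=1}^{n_\alpha}\tfrac{tz_\alpha-a_j}{z_\alpha-a_j}\to\prod_{j=1}^{n_\alpha}\tfrac{a_j}{a_j-z_\alpha}$ (the $a_j$ are strictly enclosed, so these stay bounded on $\gamma_\alpha$); the pair factors $\tfrac{(tz_\alpha-qz_\beta)(z_\alpha-z_\beta)}{(z_\alpha-qz_\beta)(tz_\alpha-z_\beta)}\to q\,\tfrac{z_\alpha-z_\beta}{z_\alpha-qz_\beta}$ (the moving pole $z_\beta=tz_\alpha$ drifts off $\gamma_\beta$ toward $0$, while $z_\alpha=qz_\beta$ is kept off $\gamma_\alpha$ by condition (III)); and $\tfrac{\Pi(qz_\alpha;\rho)}{\Pi(z_\alpha;\rho)}\to\tfrac{\widetilde\Pi(qz_\alpha;\rho)}{\widetilde\Pi(z_\alpha;\rho)}$, uniformly on $\{\lvert z\rvert\le r\}$ for any $r<R^{-1}$ since $\Pi(z;\rho)=\exp\!\big(\sum_k\tfrac{1-t^k}{1-q^k}\tfrac{z^kp_k(\rho)}k\big)$ differs from $\widetilde\Pi(z;\rho)$ by $\exp\!\big(-\sum_k\tfrac{t^k}{1-q^k}\tfrac{z^kp_k(\rho)}k\big)$, whose exponent is $O\!\big(\sum_k\tfrac{(trR)^k}{k(1-q)}\big)\to0$, and both $\Pi$ and $\widetilde\Pi$ are bounded away from $0$ there uniformly in $t$. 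Passing to the limit under the integral, the $m$ factors $-1/z_\alpha$ contribute $(-1)^m$ together with the measures $dz_\alpha/z_\alpha$, the $\binom{m}{2}$ pair factors contribute $q^{m(m-1)/2}$, and the remaining pieces reassemble into the right-hand side of \eqref{eq_x22}.

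On the left-hand side, for each level $e_1(q^{\lambda^{n_i}_1}t^{n_i-1},\dots,q^{\lambda^{n_i}_{n_i}})=\sum_{j=1}^{n_i}q^{\lambda^{n_i}_j}t^{n_i-j}\to q^{\lambda^{n_i}_{n_i}}$ as $t\to0$ (only the $j=n_i$ term survives), while $\MP_{N;\{a_i\};\rho}(\lambda^1,\dots,\lambda^N)\to\WP_{N;\{a_i\};\rho}(\lambda^1,\dots,\lambda^N)$ by the definition of $\WP$. To pass the limit through the sum over $(\lambda^1,\dots,\lambda^N)$ I would invoke dominated convergence: $\lvert e_1(q^{\lambda^{n_i}_1}t^{n_i-1},\dots,q^{\lambda^{n_i}_{n_i}})\rvert\le n_i$ uniformly in $t$ and $\lambda$, and $\sum_{\lambda^1,\dots,\lambda^N}\lvert\MP_{N;\{a_i\};\rho}(\lambda^1,\dots,\lambda^N)\rvert$ is bounded uniformly for $t$ near $0$ by the absolute-convergence estimate recorded after Definition \ref{def_Mac_ascending_proc} (whose bound depends on $t$ only through $\tfrac{1-t^k}{1-q^k}\le\tfrac1{1-q}$) together with the lower bound on $\lvert\Pi(a_1,\dots,a_N;\rho)\rvert$ near $t=0$. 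Equating the two limits gives \eqref{eq_x22}. The only genuine work is the bookkeeping of the middle paragraph --- ensuring the $t$-dependent poles ($z_\beta=tz_\alpha$ and $t^{-1}q^ka_j$) stay clear of the fixed contours for small $t$, so that the limit may be taken under the integral; the rest is a routine $t\to0$ degeneration.
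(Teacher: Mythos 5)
Your proposal is correct and is exactly the route the paper takes: the paper offers no written proof beyond declaring the corollary ``a straightforward limit of the case $r_i=1$ of Theorem \ref{theorem_ascending_multilevel_1},'' and your argument carries out precisely that $t\to0$ degeneration, supplying the contour bookkeeping and the dominated-convergence justification that the paper leaves implicit.
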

{\bf Remark.} For the choice of $\rho$ such that $\Pi(z,\rho)=\exp(\tau z)$ for a parameter
$\tau>0$, and $a_1=\dots=a_N=1$, the formula \eqref{eq_x22} was guessed and checked in \cite{BCS}.
The formulas of \cite{BC_discrete} are also related to some particular choices of $\rho$ in
\eqref{eq_x22}.

\subsection{Comparison with formal setting}
Let us compare the statements of Theorems \ref{theorem_ascending_multilevel_1} and
\ref{theorem_observable_with_vars_inversed} with formal statements of Section
\ref{Section_Formal_Macdonald}.

 The statement of
 Theorem \ref{theorem_observable_with_vars_inversed} can be
obtained from Theorem \ref{theorem_observable_multilevel} by first specializing algebras
$\Lambda_{A^i}$, $\Lambda_{B^i}$ (see Definition \ref{def_Mac_ascending_proc}), then changing the
variables $z=1/w$ and further suitably deforming  the contours of integration. Note that the
contours in Theorem \ref{theorem_observable_with_vars_inversed} do not enclose $0$, while in
Theorem \ref{theorem_observable_multilevel} they do. Thus, we would pick certain residues while
deforming the contours, and these residues are responsible for the change in the observable. In
fact, for $m=1$ we performed a similar deformation in the proof of Lemma
\ref{proposition_restated_computation_from_BigMac}.

A formal version of Theorem \ref{theorem_ascending_multilevel_1} is more delicate. The difficulty
lies in the fact that our observable $e_r(q^{\lambda_1}t^{k-1},\dots,q^{\lambda_k})$ does not have
a straightforward limit as $k\to\infty$. In the appendix (Section
\ref{section_Different_form_first_operator}) we give a formal version of Theorem
\ref{theorem_ascending_multilevel_1} for the case $r_i=1$ for all $i$. Finding such a formal
version for general $r_i$ involves finding a suitable form of \emph{stable} Macdonald operators and
we do not pursue this here.

\subsection{Fredholm determinants}\label{freddetsec}
The aim of this section is to present two observables of the Macdonald measure whose expectations
can be written as Fredholm determinants. For the case $q=t$ (corresponding to Schur polynomials)
the emergence of Fredholm determinants is well--understood, due to the identification of the Schur
measure with a determinantal point process (see \cite{Ok} and also \cite{BG} for a recent review).
No such structure predicting the appearance of Fredholm determinants is known for general
parameters $(q,t)$.  The $t=0$ and general $q$ degeneration of the first Fredholm determinant we
present in Theorem \ref{Theorem_qtdet} was discovered in \cite[Corollary 3.2.10 and Theorem
3.2.11]{BigMac} by utilizing the first Macdonald difference operator and its powers. Our present
result uses a different operator diagonalized by the Macdonald polynomials (that however
degenerates to a generating series of powers of the first Macdonald operator at $t=0$).

We will write Fredholm determinant formulas for the (ascending) Macdonald measure
$\MM_{N,\{a_i\};\rho}$. We do not approach the question of generalizing the formulas below to the formal settings
of Section \ref{Section_Formal_Macdonald} (again this is known to be possible for $q=t$ case).

\medskip

\begin{theorem}\label{Theorem_qtdet}

Fix $N$ non--zero complex numbers $a_1$,\dots, $a_N$ and a specialization $\rho$ such that for some
$0<R<1$
 we have:
 $$
  |p_k(\rho)|<R^k,\ k=1,2,\dots, \quad \quad |a_i|R<1,\, i=1,\dots,N.
 $$
Define the function
\begin{equation*}
\G(w) = \frac{1}{\Pi(w;\rho)} \prod_{j=1}^{N} \frac{(tw/a_j;q)_{\infty}}{(w/a_j;q)_{\infty}}.
\end{equation*}
Further, let $u$ be a formal variable and set
\begin{equation*}
\K(w,w') =  \sum_{v=1}^{\infty} \frac{u^v}{q^v w-w'} \frac{\G(w)}{\G(q^v w)}.
\end{equation*}
Suppose that there exists a positively oriented contour $\gamma$ such that:
\begin{enumerate}[(I)]
\item $\gamma$ lies inside $\mathbb D_{R^{-1}}=\{z\in\mathbb C\mid |z|<R^{-1}\}$,
\item $\gamma$ encloses all points $a_i$, $i=1,\dots,N$, but not $q^s t a_i$, $s=1,2,\dots$,
$i=1,\dots,N$.
\item the contour $q^k\gamma$ is outside $\gamma$ for $k=1,2,\dots$.
\end{enumerate}
(Note that when $a_1=\dots=a_N$, a small circle around $a_1$ satisfies all the above assumptions.)

Then the following equality holds as an identity of power series in $u$:
\begin{equation*}
\sum_{\lambda\in\Y} \prod_{i=1}^{N}
\frac{\big(q^{\lambda_i}t^{N-i+1}u;q\big)_{\infty}}{\big(q^{\lambda_i}t^{N-i}u;q\big)_{\infty}}\,
\MM_{N,\{a_i\},\rho}(\lambda) = \det(I+\K)_{L^2(\gamma)}.
\end{equation*}
\end{theorem}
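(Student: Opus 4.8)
The plan is to build this Fredholm determinant from the contour--integral moment formulas of Theorem~\ref{theorem_ascending_multilevel_1}, specialized to the top level $n_1=\dots=n_m=N$, exactly in the spirit of the $t=0$ derivation in \cite[Theorem 3.2.11]{BigMac}. The first step is to identify the left--hand side as a generating function of moments. Writing
$$
\prod_{i=1}^N \frac{(q^{\lambda_i}t^{N-i+1}u;q)_\infty}{(q^{\lambda_i}t^{N-i}u;q)_\infty}
= \prod_{i=1}^N \frac{1}{(q^{\lambda_i}t^{N-i}u;q)_\infty}\cdot \prod_{i=1}^N (q^{\lambda_i}t^{N-i+1}u;q)_\infty,
$$
one recognizes that, up to the normalization chosen in the Noumi--Shiraishi operator of \cite{Noumi},\cite{FHHSY}, this product is (a specialization of) the eigenvalue of that operator on $P_\lambda$. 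Concretely, the operator of \cite{Noumi} is diagonalized by Macdonald polynomials with eigenvalue $\prod_i \frac{(t^{N-i+1}u;q)_\infty}{(t^{N-i}u;q)_\infty}$ evaluated at $q^{\lambda_i}$, so applying it to the Cauchy identity $\sum_{\ell(\lambda)\le N} P_\lambda(x_1,\dots,x_N)Q_\lambda(\rho) = \Pi(x_1,\dots,x_N;\rho)$ and dividing by $\Pi$, then setting $x_i = a_i$, yields a closed form for the left--hand side. I would either invoke the Heisenberg/contour--integral representation of that operator (cf.\ \cite[Prop. 3.6]{FHHSY}, and Remark~4 after Proposition~\ref{proposition_restated_computation_from_BigMac}) directly, or re--derive it by expanding $\prod_i (q^{\lambda_i}t^{N-i}u;q)_\infty^{-1} = \sum_{k\ge 0} u^k (\cdots)$ in powers of $u$ and recognizing each coefficient as a sum of the multilevel moments $\prod_j e_{r_j}(q^{\lambda_1}t^{N-1},\dots,q^{\lambda_N})$ computed by Theorem~\ref{theorem_ascending_multilevel_1} with all $n_j=N$.

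The second step is to assemble these moments into a Fredholm expansion. Theorem~\ref{theorem_ascending_multilevel_1} (with $n_j\equiv N$, which is the case equivalent to \cite[Prop. 2.2.15]{BigMac}) gives, for each choice of $r_1,\dots,r_m$, an $(r_1+\dots+r_m)$--fold contour integral whose integrand factors into single--variable pieces $\big(\prod_j \frac{tz-a_j}{z-a_j}\big)\frac{\Pi(qz;\rho)}{\Pi(z;\rho)}$ — which is exactly $\G(z)/\G(qz)$ after the identification $\G(w) = \Pi(w;\rho)^{-1}\prod_j (tw/a_j;q)_\infty/(w/a_j;q)_\infty$, since $\frac{(tw/a_j;q)_\infty}{(w/a_j;q)_\infty}\cdot\frac{(qw/a_j;q)_\infty}{(qtw/a_j;q)_\infty} = \frac{1-tw/a_j}{1-w/a_j}$ telescopes in the right way — times Cauchy--determinant and cross--term factors $\prod_{\alpha<\beta}\frac{(tz^\alpha_i - qz^\beta_j)(z^\alpha_i - z^\beta_j)}{(z^\alpha_i - qz^\beta_j)(tz^\alpha_i - z^\beta_j)}$. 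After summing over all compositions $r_1+\dots+r_m$ of a fixed total $v$ with the appropriate powers of $u$, and then over $m$, the combinatorial factors reorganize precisely into the Fredholm determinant expansion $\det(I+\K)_{L^2(\gamma)} = \sum_{v\ge 0}\frac{1}{v!}\int_{\gamma^v}\det[\K(w_i,w_j)]_{i,j=1}^v \prod dw_i$, with kernel $\K(w,w') = \sum_{v\ge 1}\frac{u^v}{q^v w - w'}\frac{\G(w)}{\G(q^v w)}$. The term $\frac{\G(w)}{\G(q^v w)}$ arises as a telescoping product $\prod_{s=0}^{v-1}\frac{\G(q^s w)}{\G(q^{s+1}w)}$ reflecting $v$ successive applications of the shift by $q$, i.e.\ strings of poles $z, qz, \dots, q^{v-1}z$ inside the contour.

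The third step is the contour bookkeeping. Hypotheses (I)--(III) are engineered so that $\G(qz)/\G(z)$ is analytic in the relevant region, so that the integrals over the nested contours $\gamma^\alpha_i$ of Theorem~\ref{theorem_ascending_multilevel_1} can all be deformed to copies of a single contour $\gamma$ (the strings $z, q^{-1}z, \dots$ of would--be poles are pushed outside by condition~(II), and $q^k\gamma$ staying outside $\gamma$ by~(III) is exactly what lets the cross--terms be expanded geometrically and resummed). I would run a careful residue/deformation argument to check that no poles are crossed, then match the resulting sum against the Fredholm series term by term in the formal parameter $u$; since everything is an identity of power series in $u$, convergence of the Fredholm determinant itself is not needed at this stage, only term--wise equality of the $u^v$ coefficients.

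The main obstacle I expect is the combinatorial resummation in step two: showing that the sum over $m$, over compositions $(r_1,\dots,r_m)$, of the products of Cauchy determinants and the $\prod_{\alpha<\beta}$ cross--terms collapses into a single $v\times v$ determinant $\det[\K(w_i,w_j)]$. This is the heart of "Fredholm--determinantizing" a moment formula; in the $t=0$ case of \cite{BigMac} it was handled by a specific algebraic identity, and the main work here is to produce the $(q,t)$ analogue of that identity — essentially verifying that the cross--terms generate exactly the off--diagonal structure of the product of kernels while the Cauchy determinants and single--variable factors supply the diagonal. The other, more routine, obstacle is checking that the Noumi operator's eigenvalue really is $\prod_i (q^{\lambda_i}t^{N-i+1}u;q)_\infty/(q^{\lambda_i}t^{N-i}u;q)_\infty$ with the normalization that makes the left--hand side come out, and verifying the telescoping identities relating $\G$ to the ratio $\frac{\Pi(qz;\rho)}{\Pi(z;\rho)}\prod_j\frac{tz-a_j}{z-a_j}$.
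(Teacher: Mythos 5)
Your first step is on target: the paper likewise recognizes the left--hand side as the eigenvalue generating function of the Noumi $q$--integral operator $\Noumi{u}{N}$ applied to the Cauchy kernel, i.e.\ it reduces the theorem to evaluating $\Noumi{u}{N}\Pi(x_1,\dots,x_N;\rho)/\Pi(x_1,\dots,x_N;\rho)$ at $x_i=a_i$ (equation \eqref{lreqn}). But your second step contains a genuine gap, and it is the heart of the matter. The coefficient of $u^v$ in $\prod_i (q^{\lambda_i}t^{N-i+1}u;q)_\infty/(q^{\lambda_i}t^{N-i}u;q)_\infty$ is $g_v(q^{\lambda_1}t^{N-1},\dots,q^{\lambda_N})$, the $(q,t)$--analogue of the complete homogeneous symmetric polynomial --- not a sum over compositions of products $\prod_j e_{r_j}$ with transparent coefficients. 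Expanding $g_v$ in the basis $e_\mu$ introduces nontrivial $(q,t)$--dependent change--of--basis coefficients, and there is no reason these should reorganize, together with the Cauchy determinants and cross--terms of Theorem \ref{theorem_ascending_multilevel_1}, into the single $k\times k$ determinant $\det[\K(w_i,w_j)]$. You correctly flag this resummation as ``the main obstacle,'' but that obstacle is precisely the proof; deferring it leaves the argument incomplete. (The route you describe is essentially the one the paper uses for its \emph{other} Fredholm determinant, Theorem \ref{ekfred}, where the observable really is the generating function $\prod_i(1-uq^{\lambda_i}t^{N-i})$ of the $e_r$'s and the moment formulas sum up termwise with no resummation needed.)

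The paper's actual mechanism avoids this entirely. The Noumi operator is an explicit sum $\sum_{\nu\in\N^N}\Noumi{u}{N,\nu}$ of difference operators indexed by $\nu$, and the proof groups the terms by the number $k$ of nonzero parts of $\nu$. Lemma \ref{step2lem} then shows, by a direct residue evaluation of a $k$--fold integral over a \emph{single} contour $\gamma$ (no nested contours, no deformations), that the symmetrized sum of $\Noumi{u}{N,\sigma(\nu)}\Pi/\Pi$ over $\sigma\in S_N$ equals $\frac{1}{(2\pi\i)^k}\int_{\gamma^k}\det[\Kprime(\nu_i,w_i,w_j)]\,dw_1\cdots dw_k$; summing the nonzero values $\nu_1,\dots,\nu_k$ over the positive integers inside the determinant produces the kernel $\K(w,w')=\sum_{v\ge1}\frac{u^v}{q^vw-w'}\frac{\G(w)}{\G(q^vw)}$, and a separate residue argument shows the Fredholm series truncates at $k=N$. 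Your intuition that $\G(w)/\G(q^vw)$ telescopes from $v$ successive $q$--shifts is correct, but it enters through the shift operators $T_{q,i}^{\nu_i}$ acting on $\Pi$, not through strings of residues picked up while deforming nested contours. To repair your proof you would need either to adopt this direct operator--to--determinant computation, or to actually establish the $(q,t)$ combinatorial identity you postulate --- which is not known and is unlikely to take the form you describe.
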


{\bf Remark 1.} In the theorem we \emph{define} the Fredholm determinant $\det(I+\K)_{L^2(\gamma)}$ through its Fredholm series
expansion
\begin{equation}
\label{eq_Fredholm_definition} \det(I+\K)_{L^2(\gamma)} = 1+ \sum_{k=1}^{\infty} \frac{1}{k!}
\int_{\gamma} dw_1 \cdots \int_{\gamma} dw_k \det\left[\K(w_i,w_j)\right]_{i,j=1}^{k}.
\end{equation}
Our definitions imply that $k$th term in the sum in \eqref{eq_Fredholm_definition} is a power
series in $u$ starting from $u^k$. Therefore, \eqref{eq_Fredholm_definition} is a well--defined
power series in $u$. In fact, this power series is easily seen in the proof to be a degree $N$ polynomial in $u$.

{\bf Remark 2.} When the above theorem is seen as analytic identity
 after specifying some value of $u$ for which all the series absolutely converge,
 then the kernel $\K$ can be represented in the following form:
\begin{equation*}
\K(w,w') =  \int_{C_{1,2,\ldots}}\Gamma(-s)\Gamma(1+s) \frac{(-u)^s}{q^s w-w'} \frac{\G(w)}{\G(q^s
w)}
\end{equation*}
where $C_{1,2,\ldots}$ is a negatively oriented contour which encloses the positive integer poles
of $\Gamma(-s)\Gamma(1+s)$ and no other poles of the integrand. This representation is useful when
performing asymptotics (cf.\ \cite[Section 3.2.3]{BigMac}, \cite[Sections 5 and 6]{BCF}).

{\bf Remark 3.} When $t=0$, \cite[Theorem 3.2.16]{BigMac} provides a second Fredholm determinant
formula for the same expectation which differs from the result of taking $t=0$ in the above theorem
(cf. \cite[Corollary 3.2.10 and Theorem 3.2.11]{BigMac}). It might be possible to write down an
analog to this second type of Fredholm determinant at the general $(q,t)$ level, though we do not
pursue that here as it is so far unclear if it has applications.

\bigskip

We now present the proof of Theorem \ref{Theorem_qtdet}. At the end of the section we state one
other Fredholm determinant result in the form of Theorem \ref{ekfred}.

\begin{proof}[Proof of Theorem \ref{Theorem_qtdet}]
The following operator and eigenfunction relation in Proposition \ref{NoumiProp} come from
\cite[Proposition 3.24]{FHHSY} and therein is attributed to a personal communication from M.~Noumi;
its proof will appear in \cite{Noumi}. We also present E.~Rains' proof of this result as an
appendix in Section \ref{app}.

\begin{definition}\label{NoumiOpDef}
For any $N\geq 1$  and any $\nu\in \N^N$ define a difference operator $\Noumi{u}{N,\nu}$ which
acts on the space of analytic functions in $x_1,\ldots, x_N$ as
\begin{equation}\label{NoumiOpEqnNu}
\Noumi{u}{N,\nu} = u^{|\nu|} \prod_{1\le i<j\le N} \frac{q^{\nu_j}x_j-q^{\nu_i}x_i}{x_j-x_i}
\prod_{1\le i,j\le N} \frac{(tx_i/x_j;q)_{\nu_i}}{(qx_i/x_j;q)_{\nu_i}} \prod_{i=1}^{N}
\big(T_{q,i}\big)^{\nu_i}.
\end{equation}
Define the \emph{Noumi $q$--integral operator} $\Noumi{u}{N}$  which acts on the space of
analytic functions in $x_1,\ldots, x_N$ as
\begin{equation}\label{NoumiOpEqn}
\Noumi{u}{N} = \sum_{\nu\in\N^N} \Noumi{u}{N,\nu}.
\end{equation}
\end{definition}

The Macdonald polynomials diagonalize the Noumi $q$--integral operator with explicit eigenvalues.

\begin{proposition}\label{NoumiProp}
For any $N\geq 1$, formal parameter $u$, and $\lambda\in \Y$ such that $\ell(\lambda)\le N$, the
following identity of power series in $u$ holds
\begin{equation}\label{NoumiEigRel}
\Noumi{u}{N} P_{\lambda}(x_1,\ldots, x_n) = \prod_{1\le i\le N}
\frac{(q^{\lambda_i}t^{N+1-i}u;q)_{\infty}}{(q^{\lambda_i}t^{N-i}u;q)_{\infty}}P_{\lambda}(x_1,\dots,x_N).
\end{equation}
\end{proposition}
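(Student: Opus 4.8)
The plan is to follow the classical route by which one shows that a $q$-difference operator is diagonalized by Macdonald polynomials. Fix $N$. Each coefficient of $u^k$ in $\Noumi{u}{N}$ is a \emph{finite} sum of rational-coefficient $q$-shift operators, one for each $\nu\in\N^N$ with $|\nu|=k$, and the rational coefficient of each $\Noumi{u}{N,\nu}$ in \eqref{NoumiOpEqnNu} is homogeneous of degree $0$ in $x$; hence $\Noumi{u}{N}$ preserves the degree in $x_1,\dots,x_N$, the exponent of $u$ merely recording the total shift $|\nu|$. Step one is to show that $\Noumi{u}{N}$ maps symmetric polynomials in $x_1,\dots,x_N$ to symmetric polynomials: although the summands have apparent poles along the diagonals $x_i=x_j$, these cancel when one acts on a symmetric polynomial, by pairing the summand indexed by $\nu$ with the one indexed by the tuple obtained from $\nu$ by interchanging $\nu_i$ and $\nu_j$ and checking that the residue at $x_i=x_j$ of the sum vanishes --- exactly as in Macdonald's verification that the operators $\Mac^r_N$ preserve $\Lambda$, cf.\ \cite[Chapter VI]{M}.

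Step two is to establish dominance triangularity and extract the eigenvalue. Writing $R_\nu(x)$ for the rational coefficient of $\Noumi{u}{N,\nu}$ and expanding it in the regime $|x_1|\gg\dots\gg|x_N|$, every factor becomes a power series in the ratios $x_b/x_a$ with $a<b$, so $R_\nu$ is supported on monomials $x^\gamma$ all of whose partial sums $\gamma_1+\dots+\gamma_\ell$ are $\le 0$, with constant term $\prod_{i=1}^N\frac{(t;q)_{\nu_i}}{(q;q)_{\nu_i}}\,t^{\nu_i(N-i)}$. Applying $\Noumi{u}{N}$ to the monomial symmetric function $m_\lambda$ and using that the weakly decreasing arrangement $\lambda$ maximizes all partial sums among its permutations, one finds $\Noumi{u}{N}m_\lambda\in c_\lambda(u)\,m_\lambda+\sum_{\mu\,\vartriangleleft\,\lambda}\mathbb{C}[[u]]\,m_\mu$, where, summing the constant-term contributions over $\nu\in\N^N$ and applying the $q$-binomial theorem $\sum_{n\ge 0}\frac{(t;q)_n}{(q;q)_n}z^n=\frac{(tz;q)_\infty}{(z;q)_\infty}$ once in each variable,
\[
c_\lambda(u)=\sum_{\nu\in\N^N}u^{|\nu|}\,q^{\sum_i\nu_i\lambda_i}\prod_{i=1}^N\frac{(t;q)_{\nu_i}}{(q;q)_{\nu_i}}\,t^{\nu_i(N-i)}=\prod_{i=1}^N\frac{(q^{\lambda_i}t^{N+1-i}u;q)_\infty}{(q^{\lambda_i}t^{N-i}u;q)_\infty}.
\]
This is the eigenvalue claimed in \eqref{NoumiEigRel}; the case $N=1$ is precisely this $q$-binomial identity. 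Since the coefficient of $u$ in $c_\lambda(u)$ equals $\tfrac{1-t}{1-q}\sum_i q^{\lambda_i}t^{N-i}$, the $c_\lambda(u)$ are pairwise distinct among partitions of a given size, so on each homogeneous component $\Noumi{u}{N}$ admits a unique eigenbasis whose members are of the form $m_\lambda+(\text{dominance-lower terms})$.

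Step three --- the crux --- is to show that this eigenbasis really is $\{P_\lambda\}$ and not merely some basis with the same leading terms. For this it suffices to prove that $\Noumi{u}{N}$ is self-adjoint for the scalar product on symmetric polynomials in $x_1,\dots,x_N$ for which the $P_\lambda$ are orthogonal (the finite-variable specialization of \eqref{eq_Macd_scalar}): then its unique eigenbasis of the form $m_\lambda+(\text{lower})$ is automatically orthogonal, hence equals $\{P_\lambda\}$ by Definition \ref{Macdef}. Equivalently, one may show that each coefficient of $u$ in $\Noumi{u}{N}$ commutes with the first Macdonald operator $\Mac^1_N$, whose action on $\{P_\lambda:\ell(\lambda)\le N\}$ is by pairwise distinct eigenvalues, cf.\ \eqref{eq_Macdonald_operator_eigen}. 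Proving this self-adjointness (equivalently, placing $\Noumi{u}{N}$ inside the commutative algebra of difference operators underlying the integrable structure of the Macdonald system) is the main obstacle; it is what Rains' argument in Section \ref{app} supplies. Steps one and two above are routine adaptations of the classical treatment of the operators $\Mac^r_N$. A possible alternative to the operator-theoretic argument would be to apply $\Noumi{u}{N}$ in the $x$-variables directly to the Cauchy kernel $\sum_\lambda P_\lambda(x)Q_\lambda(y)$ and to recognize the result as the $c_\lambda(u)$-weighted sum, in the spirit of the Heisenberg representation of \cite{FHHSY}.
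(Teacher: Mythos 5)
Your steps one and two are sound: the pole--cancellation argument for symmetry, the dominance triangularity, and the computation of the diagonal coefficient $c_\lambda(u)$ are all correct (the constant term $\prod_i\frac{(t;q)_{\nu_i}}{(q;q)_{\nu_i}}t^{\nu_i(N-i)}$ of $R_\nu$ in the regime $|x_1|\gg\dots\gg|x_N|$ is right, and it does resum via the $q$--binomial theorem to the eigenvalue in \eqref{NoumiEigRel}). But your step three is not a proof --- it is a statement of what remains to be proved, and you say so yourself: proving the self-adjointness of $\Noumi{u}{N}$ (equivalently, its commutation with $\Mac^1_N$) is deferred to ``Rains' argument in Section \ref{app}.'' Triangularity plus distinctness of the $c_\lambda(u)$ only yields that $\Noumi{u}{N}$ has a unique eigenbasis of the form $m_\lambda+(\text{dominance-lower terms})$; without self-adjointness for the finite-variable Macdonald scalar product, or commutation with an operator already known to be diagonalized by the $P_\lambda$ with distinct eigenvalues, nothing identifies that eigenbasis with $\{P_\lambda\}$. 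Since you prove neither, the proposal has a genuine gap at exactly the step that carries the content of the proposition; the fallback you mention (acting on the Cauchy kernel $\sum_\lambda P_\lambda(x)Q_\lambda(y)$) is likewise only named, not carried out.

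For comparison, the paper's proof (due to E.~Rains, Section \ref{app}) avoids triangularity and self-adjointness entirely. It starts from an integral-operator identity for the elliptic interpolation functions of \cite{RainsTrans}, in which the eigenvalue structure is already present, and degenerates it through a chain of limits: $p\to 0$ to Okounkov's $BC$-type interpolation polynomials \cite{OkBC}, then $s\to\infty$ to shifted Macdonald polynomials, then a final rescaling to Macdonald polynomials. The resulting contour integral is evaluated as a sum over residues at the points $q^k y_i$, which produces precisely the sum over $\nu\in\N^N$ defining $\Noumi{u}{N}$ together with the claimed eigenvalue. If you wish to complete your route instead, you must actually verify that each coefficient of $u^k$ in $\Noumi{u}{N}$ commutes with $\Mac^1_N$ (or is self-adjoint); that verification is nontrivial and is essentially the content of the commutativity results of \cite{FHHSY} and \cite{Noumi}, so it cannot simply be asserted.
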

This proposition is proved in Section \ref{app}.

{\bf Remark.} \label{ellseries} For $r\geq 0$, the operator
\begin{equation*}
\Noumi{[r]}{N}\sum_{\substack{\nu\in \N^N\\|\nu|=r}} \Noumi{u}{N,\nu}
\end{equation*}
is also diagonalized by the $P_{\lambda}$ with eigenvalues $g_{r}(q^{\lambda_1}t^{N-1},
q^{\lambda_2}t^{N-2},\ldots, q^{\lambda_N}t^0)$. Here $g_{r}$ is the $(q,t)$--version of the
complete homogeneous symmetric polynomial (i.e., $g_r=Q_{(r)}$). Clearly
$\Noumi{u}{N}=\sum_{r=0}^{\infty} u^r \Noumi{[r]}{N}.$

%

\medskip

Proposition \ref{NoumiProp} implies that
\begin{equation}\label{lreqn}
\sum_{\lambda \in \Y} \prod_{1\le i\le N}
\frac{(q^{\lambda_i}t^{N+1-i}u;q)_{\infty}}{(q^{\lambda_i}t^{N-i}u;q)_{\infty}}\,
\MM_{N,\{a_i\};\rho}(\lambda) = \frac{\Noumi{u}{N} \Pi(x_1,\ldots,
x_N;\rho)}{\Pi(x_1,\ldots,x_N;\rho)}\Bigg|_{x_1=a_1,\dots,x_N=a_N},
\end{equation}
the argument here being parallel to that of Proposition \ref{prop_observable_as_application_of_op},
see also \cite[Section 2.2.3]{BigMac} for a general discussion. The only thing to check here is
that the series giving the coefficient of $u^r$ in the left side of \eqref{lreqn} is absolutely
convergent. This series is (cf.\ \cite[Chapter VI, (2.8)]{M})
\begin{equation}
\label{eq_x32} \sum_{\lambda \in \Y} g_{r}(q^{\lambda_1}t^{N-1}, q^{\lambda_2}t^{N-2},\ldots,
q^{\lambda_N}t^0) \, \MM_{N,\{a_i\};\rho}(\lambda).
\end{equation}
The combinatorial formula for Macdonald polynomials $g_r$ (see \cite[Chapter VI, Section 7]{M}) and
inequalities $0<q,t<1$ imply that
$$0\le g_{r}(q^{\lambda_1}t^{N-1}, q^{\lambda_2}t^{N-2},\ldots,
q^{\lambda_N}t^0)\le g_r(\underbrace{1,\dots,1}_N).$$ Thus, \eqref{eq_x32} is absolutely convergent
as in Definition \ref{def_Mac_ascending_proc}.

Recall (cf.\ Section \ref{formmacproc}) that $\Pi(x_1,\ldots,x_N;\rho) =\Pi(x_1;\rho)\cdots
\Pi(x_N;\rho)$. For such functions, it is possible to encode the application of the Noumi operator
in terms of a Fredholm determinant. Theorem \ref{Theorem_qtdet} immediately follows from equation
\eqref{lreqn} along with the application of the following proposition.

\begin{proposition}
\label{Proposition_qtFredholm1} The following holds as an identity of power series in $u$:
\begin{equation*}
 \frac{\Noumi{u}{N} \Pi(x_1,\ldots,
x_N;\rho)}{\Pi(x_1,\ldots,x_N;\rho)}\Bigg|_{x_1=a_1,\dots,x_N=a_N} = \det(I+\K)_{L^2(\gamma)}.
\end{equation*}
\end{proposition}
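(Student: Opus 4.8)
The plan is to compute both sides explicitly as power series in $u$ and match them coefficient by coefficient, the main tool being the Cauchy determinant formula. Since $\Pi(x_1,\dots,x_N;\rho)=\prod_{i=1}^N\Pi(x_i;\rho)$ and $(T_{q,i})^{\nu_i}$ sends $x_i\mapsto q^{\nu_i}x_i$, applying a single summand $\Noumi{u}{N,\nu}$, dividing by $\Pi$, and setting $x_i=a_i$ yields
$$
\frac{\Noumi{u}{N,\nu}\,\Pi}{\Pi}\bigg|_{x=a}=u^{|\nu|}\prod_{i<j}\frac{q^{\nu_j}a_j-q^{\nu_i}a_i}{a_j-a_i}\ \prod_{i,j}\frac{(ta_i/a_j;q)_{\nu_i}}{(qa_i/a_j;q)_{\nu_i}}\ \prod_i\frac{\Pi(q^{\nu_i}a_i;\rho)}{\Pi(a_i;\rho)}.
$$
I would reorganize the sum of this over $\nu\in\N^N$ by the support $S=\{i:\nu_i>0\}$ and the positive values $v_i:=\nu_i$, $i\in S$: every factor indexed by $i\notin S$ trivializes, and the diagonal $i=j$ of the double product contributes $\prod_{i\in S}\frac{(t;q)_{v_i}}{(q;q)_{v_i}}$, so the left side of the proposition becomes a sum over pairs $(S,(v_i)_{i\in S})$ with an explicit summand.

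For the right side I would use the Fredholm expansion \eqref{eq_Fredholm_definition}, expand each entry $\K(w_i,w_j)=\sum_{v\ge1}\frac{u^v}{q^vw_i-w_j}\frac{\G(w_i)}{\G(q^vw_i)}$ as a series, pull the sums over the $v_i$ outside, and apply the Cauchy determinant formula (see \cite{Kr}) to $\det\big[\tfrac1{q^{v_i}w_i-w_j}\big]_{i,j=1}^k$. The $k$th Fredholm term then equals
$$
\frac1{k!}\sum_{v_1,\dots,v_k\ge1}u^{v_1+\dots+v_k}\oint_\gamma\cdots\oint_\gamma\ \prod_i\frac{\G(w_i)}{\G(q^{v_i}w_i)}\cdot\frac{\prod_{i<j}(q^{v_i}w_i-q^{v_j}w_j)(w_j-w_i)}{\prod_{i,j}(q^{v_i}w_i-w_j)}\ \prod_i dw_i.
$$
Using the telescoping identity $\frac{\G(w)}{\G(q^vw)}=\frac{\Pi(q^vw;\rho)}{\Pi(w;\rho)}\prod_{j=1}^N\frac{(tw/a_j;q)_v}{(w/a_j;q)_v}$, one checks that, under assumptions (I)--(III), the only poles of the integrand inside $\gamma$ are at $w_i=a_j$, $j\in\{1,\dots,N\}$, and that two integration variables landing on the same $a_j$ give a vanishing residue because of the Cauchy factor $\prod(w_j-w_i)$. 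I would then evaluate the iterated residues (using $\Res_{w=a}\frac{dw}{(w/a;q)_v}=-a/(q;q)_{v-1}$, and that $(q;q)_v=(1-q^v)(q;q)_{v-1}$ cancels the $(q^{v_i}-1)w_i$ produced by the diagonal of $\prod_{i,j}(q^{v_i}w_i-w_j)$), and use the symmetry of the summand to trade the ordered sum over distinct indices together with the $1/k!$ for a sum over $k$-element subsets $S\subseteq\{1,\dots,N\}$. This produces an expression indexed by exactly the same data $(S,(v_i)_{i\in S})$ as on the left side.

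It then remains to match the two expressions summand by summand. After cancelling the common factors $u^{\sum v_i}$, $\prod_{i\in S}\frac{(t;q)_{v_i}}{(q;q)_{v_i}}$, $\prod_{i\in S}\frac{\Pi(q^{v_i}a_i;\rho)}{\Pi(a_i;\rho)}$ and $\prod_{i\in S}\prod_{j\ne i}(ta_i/a_j;q)_{v_i}$, the identity reduces to the purely rational statement
$$
\prod_{i<j}\frac{q^{\nu_j}a_j-q^{\nu_i}a_i}{a_j-a_i}=\prod_{i\in S}\prod_{j\ne i}\frac{(qa_i/a_j;q)_{v_i}}{(a_i/a_j;q)_{v_i}}\cdot\frac{\prod_{i<i',\ i,i'\in S}(q^{v_i}a_i-q^{v_{i'}}a_{i'})(a_{i'}-a_i)}{\prod_{i\ne i',\ i,i'\in S}(q^{v_i}a_i-a_{i'})},
$$
which I would prove by first collapsing each $q$-Pochhammer ratio via $\frac{(qx;q)_v}{(x;q)_v}=\frac{1-q^vx}{1-x}$, turning the left factor on the right-hand side into $\prod_{i\in S}\prod_{j\ne i}\frac{a_j-q^{v_i}a_i}{a_j-a_i}$, and then splitting all products according to whether the second index lies in $S$: the $S\times S$ part combines with the Cauchy factors to give $\prod_{i<i',\ i,i'\in S}\frac{q^{v_{i'}}a_{i'}-q^{v_i}a_i}{a_{i'}-a_i}$, while the $S\times S^c$ part reproduces the remaining factors of the left-hand side (pairs with both indices outside $S$ giving $1$, pairs with exactly one index in $S$ matching up regardless of their order).

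The step I expect to be the main obstacle is the pole bookkeeping for the iterated contour integrals: one must verify that conditions (I)--(III) together with $|a_i|R<1$ really keep all spurious poles outside $\gamma$ at every stage of the residue calculation, namely the nested points $q^{-s}a_j$ with $s\ge1$, the points $q^{\pm v}$ times an integration variable or an already-selected $a_j$, and the point $0$, so that only the residues at distinct $w_i=a_j$ survive. Once this is settled, the remainder is a lengthy but routine interplay of the Cauchy determinant formula with elementary $q$-Pochhammer manipulations. Everything is to be read as an identity of formal power series in $u$: the $k$th Fredholm term has lowest $u$-degree $k$ and vanishes for $k>N$, and on the operator side each $\nu$ with $|\nu|=M$ contributes only to the $u^M$ coefficient with finitely many such $\nu$, so all the rearrangements and the interchange of $\sum_v$ with $\oint$ are legitimate coefficientwise.
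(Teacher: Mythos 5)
Your proposal is correct and follows essentially the same route as the paper's proof: expand the Fredholm series, apply the Cauchy determinant identity to the kernel, evaluate the contour integrals as residues at $w_i=a_j$ (with coincident indices killed by the Vandermonde factors, which also forces termination at $k=N$), and match against the explicit action of $\Noumi{u}{N,\nu}$ on $\Pi$ organized by the support of $\nu$. The only difference is organizational — the paper packages the matching as Lemma \ref{step2lem} (equating the $k$th Fredholm term with the symmetrized sum of $\Noumi{u}{N,\sigma(\nu)}$ over $\nu$ with $k$ nonzero parts) rather than computing both sides independently and comparing summands, but the computational content is identical.
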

\begin{proof}

We proceed in three steps. In step 1 we show how simple residue considerations imply that the
Fredholm expansion for $\det(I+\K)_{L^2(\gamma)}$ terminates after $N$ terms. In step 2 we present
a lemma which relates the $k^{th}$ term in this expansion to the application of the operators
$\Noumi{u}{N,\nu}$ with the number of non--zero parts of $\nu$ equal to $k$. In step 3 we conclude
the proof by combining the two previous steps.

\noindent {\bf Step 1:} Recall the definition of $\det(I+\K)_{L^2(\gamma)}$ via the Fredholm series expansion.
We can rewrite
\begin{equation*}
\K(w,w') = \prod_{r=1}^{N} \frac{1}{w-a_r}  \Ktilde(w,w')
\end{equation*}
where $\Ktilde(w,w')$ is now analytic inside the contour $\gamma$ in both variables. This means
that we can evaluate all of the $w_i$ integrations in \eqref{eq_Fredholm_definition} via the
residue theorem. Each variable $w_i$ can pick a residue at any of $\{a_1,\ldots, a_N\}$. This leads
to the expansion
\begin{equation*}
\int_{\gamma} dw_1 \cdots \int_{\gamma} dw_k \det\left[\K(w_i,w_j)\right]_{i,j=1}^{k} = \sum_{p}
\det\left[ \prod_{\substack{r=1\\r\neq p(i)}}^{N} \frac{1}{a_{p(i)}-a_r}
\tilde{K}_u(a_{p(i)},a_{p(j)})\right]_{i,j=1}^{k}
\end{equation*}
where the summation is over all assignments $p:\{1,\ldots k\}\to \{1,\ldots, N\}$. If $k>N$ then
there must exist some $i\neq i'$ such that $p(i)=p(i')$. Consequently, row $i$ and row $i'$ of the
above matrix coincide, hence the determinant is zero. Thus
\begin{equation}\label{deteqn2}
\det(I+\K)_{L^2(\gamma)} = 1+ \sum_{k=1}^{N} \frac{1}{k!} \int_{\gamma} dw_1 \cdots \int_{\gamma} dw_k
\det\left[\K(w_i,w_j)\right]_{i,j=1}^{k}.
\end{equation}

\noindent {\bf Step 2:} We now show how the $k^{th}$ term in equation \eqref{deteqn2} arises from
a combination of the $\Noumi{u}{N,\nu}$ with the number of non--zero parts of $\nu$ equal to $k$
(and these non--zero parts summed over the natural numbers).
\begin{lemma}\label{step2lem}
Fix $N\geq 0$, $k\in \{0,1,\ldots,N\}$ and assume $\nu\in \N^N$ is such that $\nu_1,\ldots,
\nu_k\geq 1$ and $\nu_{k+1},\ldots, \nu_{N} =0$. Then, for all $a_1,\ldots, a_N$ and $u$,
\begin{multline}\label{lemmasneqn}
\frac{1}{(N-k)!}  \sum_{\sigma\in S_N}
\frac{\Noumi{u}{N,\sigma(\nu)}\Pi(x_1,\ldots, x_N;\rho)}{\Pi(x_1,\ldots,
x_N;\rho)}\Bigg|_{x_1=a_1,\dots,x_N=a_N}  \\
 = \frac{1}{(2\pi \i)^k} \int_{\gamma} dw_1 \cdots \int_{\gamma} dw_k
\det\left[\Kprime(\nu_i,w_i,w_j)\right]_{i,j=1}^{k}
\end{multline}
where
\begin{equation*}
\Kprime(v,w,w') = \frac{u^{v}}{q^{v} w-w'} \frac{\G(w)}{\G(q^{v} w)},
\end{equation*}
and $S_n$ is the symmetric group of rank $N$ which acts on $\nu=(\nu_1,\ldots, \nu_k)$ by
permuting its coordinates.
\end{lemma}
\begin{proof}
We evaluate the right--hand side of equation \eqref{lemmasneqn} via residues in order to prove the
theorem. Observe that the only term in $\Kprime$ involving both $w$ and $w'$ is $(q^v w-w')^{-1}$. The
Cauchy determinant identity  \eqref{eq_Cauchy_det} may be applied to this term, and a small
calculation and reordering of terms leads to
\begin{eqnarray*}
\textrm{RHS \eqref{lemmasneqn}} &=& \frac{1}{(2\pi \i)^k}\int_{\gamma} dw_1 \cdots \int_{\gamma} dw_k
\prod_{i=1}^{k} \prod_{j=1}^{N} \frac{-1}{w_i-a_j}
 \\ & &\times \prod_{\substack{i,j=1\\i\neq j}}^{k} (w_i-w_j) \prod_{i,j=1}^{k} \frac{-1}{w_j-q^{\nu_i}w_i} \prod_{i=1}^{k}\prod_{j=1}^{N} (a_j-q^{\nu_i}w_i)\\
&& \times u^{|\nu|} \prod_{1\leq i<j\leq k} \frac{q^{\nu_i}w_i - q^{\nu_j}w_j}{w_i-w_j}
\prod_{i=1}^{k} \prod_{j=1}^{N} \frac{(tw_i/a_j;q)_{\infty}}{(qw_i/a_j;q)_{\infty}}
\prod_{i=1}^{k} \frac{\Pi(q^{\nu_i}w_i;\rho)}{\Pi(w_i;\rho)}.
\end{eqnarray*}
Inspection of the above formula reveals that it is only the first product which has poles inside
the $\gamma$ contour. The residue theorem implies that we can evaluate the above integral by
computing the sum of the residues at $w_i=a_{p(i)}$ $1\leq i\leq k$, summed over every choice of
assignment $p:\{1,\ldots, k\}\to \{1,\ldots, N\}$. Further inspection reveals that due to factors
$(w_i-w_j)$, if $p(i)=p(j)$ for some $i\neq j$, then the residue is zero. Hence we are left with
the sum over all assignments for which $p(i)\neq p(j)$ when $i\neq j$. As a convention, define
$p(1+k),\ldots, p(N)$ to be the (ordered) remaining elements of $\{1,\ldots, N\}$ which are not
equal to $p(1),\ldots, p(k)$. Denote as $P$ the set of all such defined assignment (or
permutations) from $\{1,\ldots, N\}\to \{1,\ldots, N\}$. Thus (after noticing that various factors
of $-1$ multiply to $1$)
\begin{eqnarray*}
\textrm{RHS \eqref{lemmasneqn}} &=& \sum_{p\in P} \prod_{i=1}^{k} \prod_{j=k+1}^{N} \frac{q^{\nu_i}a_{p(i)} - a_{p(j)}}{a_{p(i)}-a_{p(j)}}  \\
&& \times u^{|\nu|} \prod_{1\leq i<j\leq k} \frac{q^{\nu_i}x_{p(i)} -
q^{\nu_j}a_{p(j)}}{a_{p(i)}-a_{p(j)}} \prod_{i=1}^{k} \prod_{j=1}^{N}
\frac{(ta_{p(i)}/a_{p(j)};q)_{\infty}}{(qa_{p(i)}/a_{p(j)};q)_{\infty}} \prod_{i=1}^{k}
\frac{\Pi(q^{\nu_i}a_{p(i)};\rho)}{\Pi(a_{p(i)};\rho)}.
\end{eqnarray*}
 Recalling that
$\nu_{k+1}=\ldots=\nu_N=0$ we can combine the above expressions as
\begin{equation*}
\textrm{RHS \eqref{lemmasneqn}} = \sum_{p\in P} u^{|\nu|} \prod_{1\leq i<j\leq N}
\frac{q^{\nu_i}a_{p(i)} - q^{\nu_j}a_{p(j)}}{a_{p(i)}-a_{p(j)}} \prod_{i,j=1}^{N}
\frac{(ta_{p(i)}/a_{p(j)};q)_{\infty}}{(qa_{p(i)}/a_{p(j)};q)_{\infty}} \prod_{i=1}^{N}
\frac{\Pi(q^{\nu_i}a_{p(i)};\rho)}{\Pi(a_{p(i)};\rho)}.
\end{equation*}
Again, due to the fact that $\nu_{k+1}=\ldots=\nu_N=0$, the summation over $p\in P$ can be replaced
by $p\in S_N$, yielding
\begin{equation*}
\textrm{RHS \eqref{lemmasneqn}} = \frac{1}{(N-k)!} \sum_{p\in S_N} u^{|\nu|} \prod_{1\leq i<j\leq
N} \frac{q^{\nu_i}a_{p(i)} - q^{\nu_j}a_{p(j)}}{a_{p(i)}-a_{p(j)}} \prod_{i,j=1}^{N}
\frac{(ta_{p(i)}/a_{p(j)};q)_{\infty}}{(qa_{p(i)}/a_{p(j)};q)_{\infty}} \prod_{i=1}^{N}
\frac{\Pi(q^{\nu_i}a_{p(i)};\rho)}{\Pi(a_{p(i)};\rho)}.
\end{equation*}
The $(N-k)!$ came from the size of $S_N/P$. We now call $\sigma = p^{-1}$ and replace $a_{p(i)}$ by
$a_i$ and $\nu_i$ by $\nu_{\sigma(i)}$ in the above expression. Noting that
\begin{equation*}
\frac{(T_{q,i}^{\nu_i}\Pi)(x_1,\ldots, x_N;\rho)}{\Pi(x_1,\ldots, x_N;\rho)} =
\frac{\Pi(q^{\nu_i}x_{i};\rho)}{\Pi(x_{i};\rho)},
\end{equation*}
we are finally led to
\begin{equation*}
\textrm{RHS \eqref{lemmasneqn}} = \frac{1}{(N-k)!}  \sum_{\sigma\in S_N}
\frac{\Noumi{u}{N,\sigma(\nu)} \Pi(x_1,\ldots, x_N;\rho)}{\Pi(x_1,\ldots,
x_n;\rho)}\Bigg|_{x_1=a_1,\dots,x_N=a_N},
\end{equation*}
as desired to prove the lemma.
\end{proof}

\noindent {\bf Step 3:} We now rewrite the Noumi $q$--integral operator in terms of the expressions
on the left--hand side of equation \eqref{lemmasneqn}. In particular, we split the summation
defining $\Noumi{u}{N}$ based on the number of non--zero parts to $\nu$:
\begin{eqnarray*}
\Noumi{u}{N}= \sum_{\nu\in \N^N} \Noumi{u}{N,\nu} &=& \sum_{k=0}^{N} \sum_{\substack{S\subseteq\{1,\ldots, N\}\\|S|=k}} \sum_{\substack{\nu\in \N^N\\ \nu_{i}>0 \textrm{ for } i\in S \nu_{i}=0 \textrm{ for } i\notin S\\}} \Noumi{u}{N,\nu}\\
&=& \sum_{k=0}^{N} \sum_{\nu_1,\ldots \nu_k=1}^{\infty} \frac{N!}{(N-k)! k!} \frac{1}{N!}
\sum_{\sigma\in S_N} \Noumi{u}{N,\sigma(\nu)}
\end{eqnarray*}
where in the second line $\nu = (\nu_1,\ldots, \nu_k,0,\ldots 0)$. Note that the $N$ choose $k$ factor came from the number of ways of choosing the subset $S$, and the reciprocal of $N$ factorial came from the symmetrization of $\nu$.

Using the above calculation and Lemma \ref{step2lem}, we find that

\begin{align*}
&\frac{\Noumi{u}{N}\Pi(x_1,\ldots, x_N;\rho)}{\Pi(x_1,\ldots,
x_n;\rho)}\Bigg|_{x_1=a_1,\dots,x_N=a_N}\\
&=\sum_{k=0}^{N} \frac{1}{k!} \sum_{\nu_1,\ldots \nu_k=1}^{\infty}
\frac{1}{(2\pi \i)^k} \int_{\gamma} dw_1 \cdots \int_{\gamma} dw_k  \det\left[\Kprime(\nu_i,w_i,w_j)\right]_{i,j=1}^{k}\\
&= \sum_{k=0}^{N} \frac{1}{k!} \frac{1}{(2\pi \i)^k} \int_{\gamma} dw_1 \cdots \int_{\gamma} dw_k
\det\left[\K(w_i,w_j)\right]_{i,j=1}^{k}
\end{align*}
where in the third line the summation over the $\nu_i$ was absorbed into the determinant
(resulting in the $\K$ kernel). Finally, by virtue of equation \eqref{deteqn2} from step 1, we
conclude the proof of the proposition.
\end{proof}

As explained before the statement of Proposition \ref{Proposition_qtFredholm1}, this also completes the proof of Theorem \ref{Theorem_qtdet}.
\end{proof}

We present a second general $(q,t)$ Fredholm determinant which relies upon the Macdonald difference
operators $\Mac^r_N$ (see Section \ref{Section_ascending_obs}) and their elementary symmetric
function eigenvalues (see equation \eqref{eq_Macdonald_operator_eigen}).

Letting $u$ be a formal parameter, we may define
\begin{equation*}
\Mac_N(u) = \sum_{r=0}^{N} (-u)^r \Mac^r_N.
\end{equation*}
Then, for $\lambda\in \Y$ with $\ell(\lambda)\leq N$, we have
\begin{equation}\label{macurel}
\Mac_N(u) P_{\lambda}(x_1,\ldots,x_N) = \prod_{i=1}^{N} (1-u q^{\lambda_i} t^{N-i}).
\end{equation}
This follows from equation  \eqref{eq_Macdonald_operator_eigen} since the right--hand side above
is the generating functions for the elementary symmetric polynomials, cf.\ \cite[Chapter I,
Section 2]{M}.

\begin{theorem}\label{ekfred}
Fix $N$ non--zero complex numbers $a_1$,\dots, $a_N$ and a specialization $\rho$ such that for some
$0<R<1$
 we have:
 $$
  |p_k(\rho)|<R^k,\ k=1,2,\dots, \quad \quad |a_i|R<1,\, i=1,\dots,N.
 $$
Let $C_a$ be a contour which  lies inside a circle of radius $R^{-1}$ and which encloses all $a_1,\ldots, a_N$
but not $ta_1,\ldots, ta_N$. Set
\begin{equation*}
\J(w,w') = \frac{1}{tw'-w} \prod_{m=1}^{N}\frac{tw-a_m}{w-a_m} \, \frac{\Pi(qw;\rho)}{\Pi(w;\rho)}.
\end{equation*}

Then the following equality holds as an identity of power series in $u$
\begin{equation*}
\sum_{\lambda\in\Y} \prod_{i=1}^{N} (1-u q^{\lambda_i} t^{N-i})\MM_{N,\{a_i\},\rho}(\lambda) =
\det(I-u\J)_{L^2(C_a)}.
\end{equation*}
\end{theorem}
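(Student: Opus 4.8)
The plan is to mimic the proof of Proposition \ref{Proposition_qtFredholm1} (and hence of Theorem \ref{Theorem_qtdet}), but the argument is shorter here because $\Mac_N(u)=\sum_{r=0}^N(-u)^r\Mac^r_N$ is a \emph{finite} sum and each piece $\Mac^r_N$ already has a contour--integral representation available from the proof of Theorem \ref{theorem_ascending_multilevel_1}. First I would reduce the left--hand side to an application of $\Mac_N(u)$: by the eigenrelation \eqref{macurel} and the argument of Proposition \ref{prop_observable_as_application_of_op} (using that the marginal of $\MP_{N;\{a_i\};\rho}$ on $\lambda^N$ is $\MM_{N;\{a_i\};\rho}$), one gets, as an identity of polynomials in $u$,
\[
 \sum_{\lambda\in\Y}\prod_{i=1}^N(1-uq^{\lambda_i}t^{N-i})\,\MM_{N,\{a_i\},\rho}(\lambda)
 =\frac{\Mac_N(u)\,\Pi(x_1,\dots,x_N;\rho)}{\Pi(x_1,\dots,x_N;\rho)}\bigg|_{x_1=a_1,\dots,x_N=a_N}.
\]
The only point to check is absolute convergence of the coefficient of each $u^r$, namely of $\sum_{\lambda}e_r(q^{\lambda_1}t^{N-1},\dots,q^{\lambda_N})\,\MM_{N,\{a_i\},\rho}(\lambda)$; this follows from $0\le e_r(q^{\lambda_1}t^{N-1},\dots,q^{\lambda_N})\le e_r(1,\dots,1)=\binom Nr$ (all arguments lie in $(0,1]$) together with the absolute convergence built into Definition \ref{def_Mac_ascending_proc}, after which the finite sum over $0\le r\le N$ commutes with $\sum_\lambda$.

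Next I would put each $\Mac^r_N\Pi/\Pi$ in contour--integral form. Applying Theorem \ref{theorem_ascending_multilevel_1} with $m=1$, $n_1=N$, $r_1=r$ (equivalently, formula \eqref{eq_x21} evaluated at $x_j=a_j$) with all inner contours taken equal to $C_a$ gives
\[
 \frac{\Mac^r_N\Pi(x;\rho)}{\Pi(x;\rho)}\bigg|_{x=a}
 =\frac{1}{r!\,(2\pi\i)^r}\oint_{C_a}\!\!\cdots\oint_{C_a}
 \det\!\left[\frac{1}{tz_i-z_j}\right]_{i,j=1}^{r}
 \prod_{i=1}^{r}\left(\left(\prod_{m=1}^N \frac{tz_i-a_m}{z_i-a_m}\right)\frac{\Pi(qz_i;\rho)}{\Pi(z_i;\rho)}\,dz_i\right).
\]
Here the hypotheses on $C_a$ are exactly what the residue computation in the proof of Theorem \ref{theorem_ascending_multilevel_1} requires: $C_a\subset\mathbb D_{R^{-1}}$ makes $\Pi(qz;\rho)/\Pi(z;\rho)$ analytic and nonvanishing inside $C_a$, and $C_a$ being tight around $a_1,\dots,a_N$ (enclosing them but not $ta_m$, with $tC_a$ and $t^{-1}C_a$ missing the interior of $C_a$) ensures the only poles of the integrand inside $C_a$ are at $z_i=a_m$. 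Now set $\hat g(z)=\prod_{m=1}^N\frac{tz-a_m}{z-a_m}\cdot\frac{\Pi(qz;\rho)}{\Pi(z;\rho)}$, so $\J(w,w')=\hat g(w)/(tw'-w)$. Since the factor $\hat g(w_i)$ depends only on the row index, and since $\det[1/(tw_j-w_i)]_{i,j}=\det[1/(tw_i-w_j)]_{i,j}$ (the matrix is the transpose), one has
\[
 \det\!\left[\J(w_i,w_j)\right]_{i,j=1}^{r}=\left(\prod_{i=1}^{r}\hat g(w_i)\right)\det\!\left[\frac{1}{tw_i-w_j}\right]_{i,j=1}^{r},
\]
which (renaming $z$ to $w$) is precisely the integrand above; hence $\sum_\lambda e_r(q^{\lambda_1}t^{N-1},\dots,q^{\lambda_N})\,\MM_{N,\{a_i\},\rho}(\lambda)=\frac{1}{r!(2\pi\i)^r}\oint_{C_a}\cdots\oint_{C_a}\det[\J(w_i,w_j)]_{i,j=1}^r\prod_i dw_i$.

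Finally, multiplying this identity by $(-u)^r$, summing over $0\le r\le N$, and invoking the first display, the left--hand side of the theorem becomes $\sum_{r=0}^N\frac{1}{r!(2\pi\i)^r}\oint_{C_a}\cdots\oint_{C_a}\det[-u\,\J(w_i,w_j)]_{i,j=1}^r\prod_i dw_i$, which is exactly the Fredholm expansion \eqref{eq_Fredholm_definition} of $\det(I-u\J)_{L^2(C_a)}$, with the same normalization of contour integrals by $2\pi\i$ as in the proof of Proposition \ref{Proposition_qtFredholm1}. The truncation at $r=N$ is automatic on both sides: $\Mac_N(u)$ is a degree--$N$ polynomial in $u$, while on the Fredholm side, exactly as in Step 1 of the proof of Proposition \ref{Proposition_qtFredholm1}, $\J$ has only the $N$ poles $a_1,\dots,a_N$ inside $C_a$ and $\det[\J(w_i,w_j)]$ vanishes when two arguments coincide, so all terms with $r>N$ are zero. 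The main obstacle is not conceptual but bookkeeping: one must (i) verify carefully that $C_a$ as in the hypothesis genuinely qualifies as the contour $\gamma^1_i$ of Theorem \ref{theorem_ascending_multilevel_1}, i.e.\ that the integrand has no poles inside $C_a$ other than at the $a_m$ (shrinking or deforming $C_a$ without crossing poles if necessary), and (ii) track the transposes, signs, and $2\pi\i$ factors in the kernel identification so as to land on $\det(I-u\J)$ and not $\det(I+u\J)$.
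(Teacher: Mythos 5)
Your proof is correct and follows essentially the same route as the paper's: reduce the left--hand side to $\Mac_N(u)\Pi/\Pi$ via the eigenrelation \eqref{macurel}, invoke the $m=1$ case of Theorem \ref{theorem_ascending_multilevel_1} to write each $\Mac^r_N\Pi/\Pi$ as an $r$--fold contour integral of $\det[\J(w_i,w_j)]$ over $C_a$, and sum the $(-u)^r$--weighted terms into the Fredholm expansion, which truncates after $N$ terms by the repeated--pole argument. The extra care you take with the transpose in the Cauchy determinant and with checking that $C_a$ (enclosing the $a_m$ but not the $ta_m$) excludes exactly the uncancelled poles is consistent with what the paper's shorter argument implicitly assumes.
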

{\bf Remark 1.} The above Fredholm determinant is related to the generating function of the
elementary symmetric polynomials $e_r$ whereas the Fredholm determinant presented in Theorem
\ref{Theorem_qtdet} is related (see Proposition \ref{NoumiProp}) to the $(q,t)$--analog of the
complete homogeneous symmetric polynomials $g_{r}$. There is an endomorphism $\omega_{q,t}$ on
$\Lambda$ which maps $\omega_{q,t} g_{r}(X;q,t) = e_r(X)$. At this point, it is not clear how this
endomorphism is related to the two Fredholm determinant formulas we have presented.

{\bf Remark 2.} In equation (3.3) of \cite{Warnaar} (for $\rho$ a finite length specialization into
a set of complex numbers $y_1,y_2,\ldots$) an alternative expression (written as $F(u;x,y;t)$) is
given for the above Fredholm determinant. This function is then related to the Izergin--Korepin
determinant.

{\bf Remark 3.} It is possible to state a formal version of the above theorem immediately from
Proposition \ref{proposition_restated_computation_from_BigMac}.

\begin{proof}[Proof of Theorem \ref{ekfred}]
Observe that by virtue of the eigenrelation \eqref{macurel} satisfied by $\Mac_N(u)$,
\begin{equation*}
\frac{\Mac_N(u)\Pi(x_1,\ldots, x_N;\rho)}{\Pi(x_1,\ldots,
x_N;\rho)}\Bigg|_{x_1=a_1,\dots,x_N=a_N} = \sum_{\lambda\in\Y} \prod_{i=1}^{N}
(1-u q^{\lambda_i} t^{N-i})\MM_{N,\{a_i\},\rho}(\lambda).
\end{equation*}

A special case of Theorem \ref{theorem_ascending_multilevel_1} (also found in \cite[Proposition 2.2.10]{BigMac}) states that
\begin{equation*}
\frac{\Mac^r_N\Pi(x_1,\ldots, x_N;\rho)}{\Pi(x_1,\ldots,
x_N;\rho)}\Bigg|_{x_1=a_1,\dots,x_N=a_N} =  \frac{1}{r!} \frac{1}{(2\pi \i)^r} \oint_{C_a}dw_1\cdots \oint_{C_a} dw_r \det\left[\J(w_k,w_\ell)\right]_{k,\ell=1}^{r}.
\end{equation*}

Multiplying each term by $(-u)^r$ and summing over $r\in \{0,1,\ldots, N\}$ we recover the first
$N+1$ terms in the Fredholm series expansion of $\det(I-u\J)_{L^2(C_a)}$. It is easy to see that
all further terms in the expansion vanish (this is somewhat similar to step 1 in the proof of
Theorem \ref{Theorem_qtdet}), hence the desired result.
\end{proof}

\section{Appendix: E.~Rains' proof of Proposition \ref{NoumiProp}}\label{app}

The following appendix, due to Eric Rains, provides a derivation of Proposition \ref{NoumiProp}
from an elliptic integral operator. Let us fix some notation:
$$
\theta_q(x) := \prod_{k\geq 0} (1-q^k x)(1-q^{k+1}/x), \qquad
\Gamma_{p,q}(x) := \prod_{j,k\geq 0} \frac{1-p^{j+1}q^{k+1}/x}{1-p^jq^k x}, \qquad
\tilde{\Gamma}_{q}(x) := \prod_{k\geq 0} \frac{1}{1-q^k x}.
$$
Note that $\tilde{\Gamma}_q(x)$ is slightly different than the usual definition of the $q$--deformed
Gamma function, hence the tilde. When multiple arguments come into these functions, it means that
one multiplies the single variable evaluation over all variables. For example,
\begin{equation*}
\Gamma_{p,q}(y_i^{\pm}y_j^{\pm}) := \Gamma_{p,q}(y_iy_j)\Gamma_{p,q}(y_iy_j^{-1})
\Gamma_{p,q}(y_i^{-1}y_j)\Gamma_{p,q}(y_i^{-1} y_j^{-1}),
\end{equation*}
or $(a,b;q)_{\infty}=(a;q)_\infty (b;q)_\infty$. In what follows a pair of partitions is denoted
by a bold lambda $\blambda$ whereas a single partition is just $\lambda$.

The elliptic interpolation functions $\cR^{*(n)}_{\blambda}(y_1,\dots,y_n;u_0,u_1;t;p,q)$ are
defined in equation (8.45) of \cite{RainsTrans}. They satisfy the following integral operator
identity, which is itself a special case of that given in equation (8.12) of \cite{RainsTrans}:
\begin{multline*}
\frac{\cR^{*(n)}_{\blambda}(y_1,\dots,y_n;u_0,u_1;t;p,q)}
     {\cR^{*(n)}_{\blambda}(\dots,t^{n-i}u_2,\dots;u_0,u_1;t;p,q)}
= \prod_{\substack{1\le i\le n\\0\le r<s\le 3}} \Gampq(t^{n-i}u_ru_s)
\frac{((p;p)(q;q))^n}{(2\Gampq(t))^n n!} \\ \times \int_{C^n}
\frac{\cR^{*(n)}_{\blambda}(x_1,\dots,x_n;t^{-1/2}u_0,t^{-1/2}u_1;t;p,q)}
     {\cR^{*(n)}_{\blambda}(\dots,t^{n-i-1/2}u_2,\dots;t^{-1/2}u_0,t^{-1/2}u_1;t;p,q)}
\notag\\
 \times \frac{\prod_{1\le i,j\le n} \Gampq(t^{1/2} x_i^{\pm 1}y_j^{\pm 1})}
     {\prod_{1\le i<j\le n} \Gampq(t y_i^{\pm 1}y_j^{\pm 1},x_i^{\pm
         1}x_j^{\pm 1})}\notag\\
\times \prod_{1\le i\le n}
  \frac{\prod_{0\le r<4}\Gampq(t^{-1/2}u_r x_i^{\pm 1})}
       {\Gampq(x_i^{\pm 2})\prod_{0\le r<4}\Gampq(u_r y_i^{\pm 1})}
  \frac{dx_i}{2\pi\i x_i},
\end{multline*}
which is valid under the assumption $t^{n-2}u_0u_1u_2u_3=pq$. Here the notation ``$\dots
t^{n-i}u_2\dots$'' means the set of variables $t^{n-i}u_2$, $i=1,\dots,n$, and similarly for
``$\dots,t^{n-i-1/2}u_2,\dots$''. The contours $C$ are constrained so that every (infinite)
collection of poles which converge to 0 lie inside the contour, and every (infinite) collection of
poles converging to $\infty$ lie outside the contour.

If we reparametrize
\begin{equation*}
(u_0,u_1,u_2,u_3)\mapsto (s,p^{1/2}u_1,tu,p^{1/2}u_3)
\end{equation*}
and take the limit $p\to 0$, the interpolation functions become the (symmetric versions of) the
interpolation polynomials $\bar{P}^{*(n)}_{\lambda}$ of Okounkov \cite{OkBC}, and we obtain the
identity
\begin{multline*}
\frac{\bar{P}^{*(n)}_{\lambda}(y_1,\dots,y_n;q,t,s)}
     {\bar{P}^{*(n)}_{\lambda}(\dots,t^i u,\dots;q,t,s)}
\notag = \frac{\Gamq(t^nus)}{\Gamq(us)} \frac{(q,t;q)^n}{2^n n!}\\ \times \int_{C^n}
\frac{\bar{P}^{*(n)}_{\lambda}(x_1,\dots,x_n;q,t,t^{-1/2}s)}
     {\bar{P}^{*(n)}_{\lambda}(\dots,t^{i-1/2}u,\dots;q,t,t^{-1/2}s)}
\notag\\
\times \frac{\prod_{1\le i,j\le n} \Gamq(t^{1/2} x_i^{\pm 1}y_j^{\pm 1})}
     {\prod_{1\le i<j\le n} \Gamq(t y_i^{\pm 1}y_j^{\pm 1},x_i^{\pm 1}x_j^{\pm 1})}
\prod_{1\le i\le n}
  \frac{\Gamq(t^{-1/2}s x_i^{\pm 1},t^{1/2}u x_i^{\pm 1})}
       {\Gamq(s y_i^{\pm 1},tu y_i^{\pm 1},x_i^{\pm 2})}
  \frac{dx_i}{2\pi\i x_i}.
\end{multline*}
We now want to reparametrize
\begin{equation*}
x_i\to t^{-1/2}sx_i, y_i\to sy_i u\to u/s
\end{equation*}
and take the limit $s\to \infty$ so that the interpolation polynomials become shifted Macdonald
polynomials.  This is an apparently badly behaved limit, as it involves $q$--gamma functions with
arguments tending to infinity.  To fix this, we observe as in Lemma 5.2 of \cite{RainsLimits} that
the $S_n$--invariant function
\begin{equation*}
\frac{\theta_q(\prod_{0\le r\le n+1}w_r/\prod_{1\le i\le n} x_i) \prod_{1\le i\le n} \prod_{0\le
r\le n+1} \theta_q(w_r x_i)} {\prod_{1\le i<j\le n} \theta_q(x_ix_j)\prod_{0\le r<s\le n+1}
  \theta_q(w_rw_s)^{-1}}
\end{equation*}
becomes 1 if we sum over cosets of $S_n$ in the hyperoctahedral  group $BC_n$.  Thus if we multiply
the integrand by $2^n$ times an instance of this function, the integral will be unchanged.  Using
the reflection identity
\begin{equation*}
\Gamq(x)\theta_q(x) = \Gamq(q/x)^{-1}
\end{equation*}
we find that we can cancel the badly scaling gamma factors by taking $w_0 = t^{-1/2}s$, $w_{n+1} =
t^{1/2}u$, and $w_i=t^{1/2}y_i$ for $1\le i\le n$.  In particular, we find
\begin{multline*}
\frac{\bar{P}^{*(n)}_{\lambda}(y_1,\dots,y_n;q,t,s)}
     {\bar{P}^{*(n)}_{\lambda}(\dots,t^i u,\dots;q,t,s)}
= \Gamq(t^nus,q/us) \frac{(q,t;q)^n}{n!} \\ \times \int_{C^n}
\frac{\bar{P}^{*(n)}_{\lambda}(x_1,\dots,x_n;q,t,t^{-1/2}s)}
     {\bar{P}^{*(n)}_{\lambda}(\dots,t^{i-1/2}u,\dots;q,t,t^{-1/2}s)}
\theta_q(us t^{n/2}\prod_{1\le i\le n} y_i/x_i)
\notag\\
\times \prod_{1\le i<j\le n}
  \frac{\Gamq(q/ty_iy_j,q/x_ix_j)}
       {\Gamq(ty_i/y_j,ty_j/y_i,t/y_iy_j,x_i/x_j,x_j/x_i,1/x_ix_j)}
\notag\\
\times \prod_{1\le i,j\le n}
  \frac{\Gamq(t^{1/2}x_i/y_j,t^{1/2}y_j/x_i,t^{1/2}/x_iy_j)}
       {\Gamq(q/t^{1/2}x_iy_j)}\notag\\
 \times \prod_{1\le i\le n}
  \frac{\Gamq(q/sy_i,q/tuy_i,t^{-1/2}s/x_i,t^{1/2}u/x_i,q/x_i^2)}
       {\Gamq(s/y_i,tu/y_i,q/t^{-1/2}s x_i,q/t^{1/2}u x_i,1/x_i^2)}
  \frac{dx_i}{2\pi\i x_i},
\end{multline*}
which after rescaling gives the limit
\begin{multline*}
\prod_{1\le i\le n}
  \frac{(q^{\lambda_i}t^{n+1-i}u;q)}
       {(q^{\lambda_i}t^{n-i}u;q)}
t^{|\lambda|} \bar{P}^{*(n)}_{\lambda}(y_1,\dots,y_n;q,t) = \\ \frac{(q,t;q)^n}{n!} \int_{C^n}
\bar{P}^{*(n)}_{\lambda}(x_1,\dots,x_n;q,t) \frac{\theta_q\left(u t^n\prod_{1\le i\le n}
y_i/x_i\right)}
     {\theta_q(u)}\notag\\
 \times \frac{\prod_{1\le i,j\le n}\Gamq(x_i/y_j,ty_j/x_i)}
     {\prod_{1\le i<j\le n}\Gamq(x_i/x_j,x_j/x_i,ty_i/y_j,ty_j/y_i)}
\notag \prod_{1\le i\le n}
  \frac{\Gamq(q/tuy_i,1/x_i)}{\Gamq(1/y_i,q/ux_i)}
  \frac{dx_i}{2\pi\i x_i},
\end{multline*}
again with the contour containing all ``small'' poles and excluding all ``large'' poles.

Now if we rescale $x\to vx$,$y\to t^{-1}vy$ and take the limit $v\to\infty$, the shifted Macdonald
polynomials become Macdonald polynomials, and we obtain the identity
\begin{align*}
\prod_{1\le i\le n}
  \frac{(q^{\lambda_i}t^{n+1-i}u;q)}
       {(q^{\lambda_i}t^{n-i}u;q)}&
P^{(n)}_{\lambda}(y_1,\dots,y_n;q,t)\notag\\
= \frac{(q,t;q)^n}{n!} \int_{C^n}& P^{(n)}_{\lambda}(x_1,\dots,x_n;q,t) \frac{\theta_q(u
\prod_{1\le i\le n} y_i/x_i)}
     {\theta_q(u)}
\notag\\
& \frac{\prod_{1\le i,j\le n}\Gamq(tx_i/y_j,y_j/x_i)}
     {\prod_{1\le i<j\le n}\Gamq(x_i/x_j,x_j/x_i,y_i/y_j,y_j/y_i)}
\prod_{1\le i\le n}
  \frac{dx_i}{2\pi\i x_i}.
\end{align*}

At this point, we observe that the ``small'' poles are at the points of the form $q^k y_i$, $k\ge
0$, and if we take a residue at one such point, the corresponding poles will not appear in the
residual integrand.  Moreover, if we shrink the contour by ever larger powers of $q$, the integrand
converges to 0 exponentially fast.  We may thus replace the contour--integral by a sum over
residues.  Note that this involves a choice of bijection between the $x$ variables and the $y$
variables, which can be absorbed by symmetry, eliminating the $1/n!$ factor.  We thus obtain the
claimed result of Proposition \ref{NoumiProp}:
\begin{multline*}
\prod_{1\le i\le n}
  \frac{(q^{\lambda_i}t^{n+1-i}u;q)}
       {(q^{\lambda_i}t^{n-i}u;q)}
P^{(n)}_{\lambda}(y_1,\dots,y_n;q,t)\notag = \sum_{\nu\in\N^n} u^{|\nu|} \\ \times \prod_{1\le
i<j\le n}
  \frac{q^{\nu_j}y_j-q^{\nu_i}y_i}{y_j-y_i}
\prod_{1\le i,j\le n}
  \frac{(ty_i/y_j;q)_{\nu_i}}{(qy_i/y_j;q)_{\nu_i}}
P^{(n)}_{\lambda}(q^{\nu_1}y_1,\dots,q^{\nu_n}y_n;q,t).
\end{multline*}
Comparing coefficients of $u^k$ gives the eigenfunction relation of \cite{Noumi}, \cite[Proposition
3.24]{FHHSY}.

\section{Appendix: On a formal version of Theorem \ref{theorem_ascending_multilevel_1}}

\label{section_Different_form_first_operator}

The goal of this section is to obtain a formal version of Theorem
\ref{theorem_ascending_multilevel_1} for the case $r_i=1$ for all $i$.

\begin{theorem}
\label{theorem_observable_multilevel_new_form} Set
$$
 \widehat \O_1(\lambda)=1+ (1-t)\sum_{j=1}^{\infty} (1-q^{\lambda_j}) t^{-j}.
$$
We have:
\begin{multline}
\label{eq_x9}
 \sum_{\lambda^1,\dots,\lambda^N} \MPF_{N,\mathbf{A},\mathbf{B}}(\lambda^1,\dots,\lambda^N) \widehat \O_{1}(\lambda^1)
\cdots
 \widehat \O_{1}(\lambda^N)
 \\= \frac{1}{(2\pi \i)^N} \oint \cdots \oint \prod_{\alpha=1}^N
 \frac{dv_\alpha}{v_\alpha} \prod_{1\le\alpha\le \beta\le N} \HH^{-1}\big((t v_\alpha)^{-1}; B^\beta\big)
   \HH^{-1}\big(A^{\alpha};v_\beta\big)
\W\big((t v_\alpha)^{-1}; v_\beta\big),
\end{multline}
where $v_\alpha$ is integrated over the circle of radius $R_\alpha$ around the origin and
$R_\beta/(tR_\alpha) <1$ for $\alpha<\beta$.
\end{theorem}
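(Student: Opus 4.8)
The plan is to follow the proof of Theorem~\ref{theorem_observable_multilevel} with the observable $\O_r$ replaced by $\widehat\O_1$: first prove a single--level contour--integral identity for the $\widehat\O_1$--average against a formal Macdonald measure $\MF_{X,Y}$, and then promote it to the multilevel formula \eqref{eq_x9} via the nested Macdonald pairings of Proposition~\ref{Prop_Mac_pro_alternative} together with the Cauchy evaluation of Proposition~\ref{proposition_evaluation_of_scalar_product}, exactly as in Section~\ref{Section_Formal_Macdonald}.

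\textbf{Single--level identity.} The key step is to show that, as an identity of formal power series,
\[
\sum_{\lambda\in\Y}\widehat\O_1(\lambda)\,\MF_{X,Y}(\lambda)=\frac{1}{2\pi\i}\oint\frac{dv}{v}\,\HH^{-1}\big((tv)^{-1};Y\big)\,\HH^{-1}\big(v;X\big),
\]
with $v$ on a circle around the origin (this is the $N=1$ case of the theorem, and by the substitution $v\mapsto(tw)^{-1}$ the right--hand side is symmetric under $X\leftrightarrow Y$). To obtain it I would realize $\widehat\O_1(\lambda)$ as the eigenvalue of a renormalized first Macdonald operator. Since $\Mac^1_N P_\lambda(x_1,\dots,x_N)=\big(\sum_{i=1}^N q^{\lambda_i}t^{N-i}\big)P_\lambda$ for $\ell(\lambda)\le N$, subtracting the value $\tfrac{1-t^N}{1-t}$ at $\lambda=\varnothing$ and rescaling by $t^{1-N}$ stabilizes: the operator $L_N:=1-(t^{-1}-1)\,t^{1-N}\big(\Mac^1_N-\tfrac{1-t^N}{1-t}\big)$ satisfies $L_N P_\lambda=\widehat\O_1(\lambda)P_\lambda$ as soon as $N\ge\ell(\lambda)$. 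Applying $L_N$ in the variables $x_1,\dots,x_N$ to $\Pi(x_1,\dots,x_N;Y)=\sum_{\ell(\lambda)\le N}P_\lambda Q_\lambda$ therefore produces $\sum_{\ell(\lambda)\le N}\widehat\O_1(\lambda)P_\lambda(x_1,\dots,x_N)Q_\lambda(Y)$. On the other hand, using $T_{q,j}\Pi(x;Y)=\Pi(x;Y)\,\Pi(qx_j;Y)/\Pi(x_j;Y)=\Pi(x;Y)\,\HH^{-1}(x_j;Y)$ and the residue theorem (equivalently the $m=1$, $r_1=1$ case of Theorem~\ref{theorem_ascending_multilevel_1}), together with the elementary rewrite $\prod_{j=1}^N\tfrac{tz-x_j}{z-x_j}=t^N\,\HH^{-1}\big((tz)^{-1};x_1,\dots,x_N\big)$, one gets $\Mac^1_N\Pi/\Pi$ as a contour integral whose contour encloses the $x_j$ but not $0$. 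Feeding this into $L_N$, the constants $t^{\pm N}$ and $\tfrac{1-t^N}{1-t}$ telescope (in particular the factor $(1-t)^{-1}$ that would come from the $r_m=1$ Cauchy determinant in $DV^m$ is absorbed by the $(1-t)$ built into $L_N$), and the single surviving $N$--dependent term $t^{-N}\Pi$ is exactly the residue at $z=0$ of $\tfrac1z\HH^{-1}\big((tz)^{-1};x_1,\dots,x_N\big)\HH^{-1}(z;Y)$; hence enlarging the contour to also enclose the origin absorbs it, leaving $\tfrac1{2\pi\i}\oint\tfrac{dz}{z}\HH^{-1}\big((tz)^{-1};x_1,\dots,x_N\big)\HH^{-1}(z;Y)$ times $\Pi(x_1,\dots,x_N;Y)$. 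Letting $N\to\infty$ coefficientwise gives the displayed identity.

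\textbf{Bootstrap.} Here I would copy the proof of Theorem~\ref{theorem_observable_multilevel} almost verbatim, with $\HH$ replaced by $\HH^{-1}$ and the shift parameter $q$ replaced by $t$. By Proposition~\ref{Prop_Mac_pro_alternative} the left--hand side of \eqref{eq_x9} equals $\big(\prod_{1\le\alpha\le\beta\le N}\Pi(A^\alpha;B^\beta)\big)^{-1}$ times a nested Macdonald pairing in which the sum over $\lambda^k$ carries the weight $\widehat\O_1(\lambda^k)$, with $P_{\lambda^k}$ on $(A^k,Y^{k-1})$ and $Q_{\lambda^k}$ on $(Y^k,B^k)$. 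Apply the single--level identity to each such sum, in the form $\HH^{-1}\big((tv_k)^{-1};Y^k,B^k\big)\HH^{-1}\big(v_k;A^k,Y^{k-1}\big)$; interchange the $v$--integrations with the pairings by continuity in the graded topology; and evaluate the pairings over $Y^1,\dots,Y^{N-1}$ one at a time by Proposition~\ref{proposition_evaluation_of_scalar_product}. As in the proof of Theorem~\ref{theorem_observable_multilevel}, the $\Pi$--$\Pi$ cross terms rebuild $\prod_{\alpha\le\beta}\Pi(A^\alpha;B^\beta)$ and cancel the prefactor (cf.\ Proposition~\ref{Prop_mass_one}); the $\Pi$--$\HH^{-1}$ cross terms propagate $(tv_\alpha)^{-1}$ against every $B^\beta$ with $\beta\ge\alpha$ and $v_\beta$ against every $A^\alpha$ with $\alpha\le\beta$, giving $\prod_{1\le\alpha\le\beta\le N}\HH^{-1}\big((tv_\alpha)^{-1};B^\beta\big)\HH^{-1}\big(A^\alpha;v_\beta\big)$; and the $\HH^{-1}$--$\HH^{-1}$ cross term yields $\big\langle\HH^{-1}((tv_\alpha)^{-1};Y),\HH^{-1}(v_\beta;Y)\big\rangle_Y=\exp\big(\sum_{k\ge1}(1-t^k)(1-q^k)\tfrac{(v_\beta/(tv_\alpha))^k}{k}\big)=\W\big((tv_\alpha)^{-1};v_\beta\big)$ for $\alpha<\beta$, where the last equality is valid precisely when $|v_\beta/(tv_\alpha)|<1$, i.e.\ $R_\beta<tR_\alpha$ --- which is the contour constraint in the statement.

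\textbf{Main obstacle.} Everything in the bootstrap is a routine transcription of the argument already given for Theorem~\ref{theorem_observable_multilevel}. The one genuinely new ingredient, and the place where care is needed, is the single--level identity: correctly identifying $\widehat\O_1$ with the eigenvalue of the renormalized operator $L_N$, and controlling the $N\to\infty$ limit formally --- in particular checking that the apparently divergent constants $t^{\pm N}$ cancel and that the leftover $t^{-N}\Pi$ is precisely the contribution of the pole of the integrand at the origin. (A possible alternative would be to deform contours in the $r=1$ case of Proposition~\ref{proposition_restated_computation_from_BigMac} and collect the residue at $0$; however $\widehat\O_1$ is not a substitution instance of $\O_1$, so the direct operator computation seems cleaner.)
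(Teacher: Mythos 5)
Your proposal is correct and follows essentially the same route as the paper's sketch: the paper likewise derives the single--level identity from the $m=1$, $r=1$ contour--integral form of the first Macdonald difference operator acting on $\Pi$, uses $\sum_{j=1}^{N}t^{N-j}=\tfrac{1}{1-t}-\tfrac{t^N}{1-t}$ together with the observation that the residue at $z=0$ equals $t^{-N}$ to absorb the $N$--dependent constants into a contour enclosing the origin, sends $N\to\infty$, and then repeats the bootstrap of Theorem \ref{theorem_observable_multilevel}. Your packaging of the renormalization as the operator $L_N$ is only a cosmetic difference, and your identification of the single--level limit as the one genuinely new step matches where the paper's argument actually does the work.
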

{\bf Remark 1.} Formula \eqref{eq_x9} should be understood in the same sense as the statement of
Theorem \ref{theorem_observable_multilevel}.

{\bf Remark 2.} Through suitable specializations and contour deformations Theorem
\ref{theorem_observable_multilevel_new_form} implies the statement of Theorem
\ref{theorem_ascending_multilevel_1} for the case $r_i=1$ for all $i$. In particular, we could also
obtain Corollary \ref{Corollary_q_TASEP} by further setting $t=0$.
\begin{proof}[Sketch of the proof of Theorem \ref{theorem_observable_multilevel_new_form}]

We start from \cite[Proposition 2.2.10]{BigMac} (that is also Theorem
\ref{theorem_ascending_multilevel_1} with $m=1$ and $r=1$)
 which reads for $X=(x_1,\dots,x_N)$ and $Y=(y_1,\dots,y_N)$:
\begin{multline*}
 \sum_{\lambda\in\Y} \left(\sum_{j=1}^N q^{\lambda_j} t^{N-j}\right) P_\lambda(X) Q_\lambda(Y)\\=\Pi(X;Y)\frac{t^N}{(t-1)} \frac{1}{2\pi \i} \oint
 \prod_{j=1}^N \frac{1-t^{-1}z^{-1}x_j}{1-z^{-1}x_j} \prod_{j=1}^N \frac{1-zy_j}{1-t z y_j} \frac{dz}{z}
 \\=\Pi(X;Y)\frac{t^N}{(t-1)} \frac{1}{2\pi \i}\oint
  \HH^{-1}\big((tz)^{-1}; X\big) \HH^{-1}\big(z; Y\big) \frac{dz}{z},
\end{multline*}
where $x_i$ and $y_i$ are assumed to be sufficiently small, and integration goes over the contours
enclosing the poles at $x_i$ and no other poles of the integrand.

Note that the residue of the last integral at $z=0$ is $t^{-N}$. Therefore, using
$$
 \sum_{j=1}^N t^{N-j} =  \frac{1}{1-t}-\frac{t^{N}}{1-t}
$$
we can rewrite (assuming that $q^{\lambda_k}=1$ for $k>N$).
\begin{multline}
\label{eq_new_form}
 \sum_{\lambda\in \Y} \left(\frac{1}{t-1}+\sum_{j=1}^{\infty} (q^{\lambda_j}-1) t^{-j}\right) P_\lambda(X) Q_\lambda(Y)=\frac{\Pi(X;Y)}{2\pi \i(t-1)}\oint
 \HH^{-1}\big((tz)^{-1}; X\big)\HH^{-1}\big(z; Y\big) \frac{dz}{z},
\end{multline}
with integration going over a circle around the origin. Now we can send the number of variables to
infinity in \eqref{eq_new_form} and obtain for two infinite sets of variables $X$, $Y$:
\begin{multline}
\label{eq_new_form_infty}
 \sum_{\lambda\in\Y} \left(1+ (1-t) \sum_{j=1}^{\infty} (1-q^{\lambda_j}) t^{-j}\right) P_\lambda(X) Q_\lambda(Y)\\
 =\Pi(X;Y)\frac{1}{(2\pi \i)}\oint
 \HH^{-1}\big((tz)^{-1}; X\big) \HH^{-1}\big(z; Y\big) \frac{dz}{z}.
\end{multline}
Since \eqref{eq_new_form_infty} has the same form as \eqref{eq_main_lemma} we get an analogue of
Theorem \ref{theorem_observable_multilevel} (which is our Theorem
\ref{theorem_observable_multilevel_new_form}) by repeating same steps as in its proof.
\end{proof}

\end{document}